\newcommand\myshade{85}
\colorlet{mylinkcolor}{red}
\colorlet{mycitecolor}{blue}
\colorlet{myurlcolor}{orange}
\newtheorem{theorem}{Theorem}[section]
\newtheorem{lemma}[theorem]{Lemma}
\newtheorem{definition}[theorem]{Definition}
\let\olddefinition\definition
\renewcommand{\definition}{\olddefinition\normalfont}
\newtheorem{example}[theorem]{Example}
\let\oldexample\example
\renewcommand{\example}{\oldexample\normalfont}
\newtheorem{proposition}[theorem]{Proposition}
\newtheorem{claim}[theorem]{Claim}
\let\oldfact\fact
\renewcommand{\fact}{\oldfact\normalfont}
\newtheorem{observation}[theorem]{Lemma}
\newtheorem{remark}[theorem]{Remark}
\let\oldremark\remark
\renewcommand{\remark}{\oldremark\normalfont}
\newcommand{\Obs}{Lemma\xspace}
\newcommand{\CCC}{CAT(0) cube complex\xspace}
\newcommand{\inseparable}[1]{#1-inseparable\xspace}
\newcommand{\elemfoldable}{elementary foldable\xspace}
\newcommand{\Z}{\mathbb Z }
\newcommand{\condX}{Property CC\xspace}
\newcommand{\modsim}{/\hspace{-.15cm}\sim}
\newcommand{\actson}{\curvearrowright} 
\newcommand{\immersedin}{\looparrowright} 
\newcommand{\embeddedin}{\hookrightarrow} 
\newcommand{\Stab}{\mathrm{Stab}} 
\newcommand{\gp}[1]{#1} 
\newcommand{\CC}[1]{\mathbf{#1}} 
\newcommand{\CCv}[1]{\mathbf{#1}} 
\newcommand{\CCc}[1]{\mathbf{#1}} 
\newcommand{\Hyp}[1]{\hat{\mathcal{#1}}} 
\newcommand{\hyp}[1]{\hat{\mathfrak{#1}}} 
\newcommand{\Hs}[1]{\mathcal{#1}} 
\newcommand{\hs}[1]{\mathfrak{#1}} 
\newcommand{\comp}[1]{{#1}^*} 
\newcommand{\embedsin}{{{}^|\hspace{-0.19cm}\longrightarrow}}
\newcommand{\simp}[1]{#1} 
\newcommand{\usimp}[1]{\tilde{#1}} 
\newcommand{\trk}[1]{#1} 
\author{Benjamin Beeker\thanks{The first author was supported by ISF grant 1941/14.} ~and Nir Lazarovich\thanks{The second author is supported by ETH Zürich Postdoctoral Fellowship Program.}}
\title{Stallings' folds for cube complexes}
\date{} 
\begin{document}
\maketitle
\begin{abstract}
We describe a higher dimensional analogue of Stallings' folding sequences for group actions on CAT(0) cube complexes. We use it to give a characterization of quasiconvex subgroups of hyperbolic groups that act properly co-compactly on CAT(0) cube complexes via finiteness properties of their hyperplane stabilizers.
%
\end{abstract}


\section{Introduction}

By the construction developed by Sageev \cite{Sag95} and results of Bergeron-Wise \cite{BeWi12}, given a hyperbolic group $G$ with ``enough'' quasiconvex codimension-1 subgroups, one can construct a CAT(0) cube complex on which $G$ acts properly and cocompactly. Analogous methods were used extensively to find cubulations of many hyperbolic groups including: 
Coxeter groups \cite{NiRe03}, 
small cancellation groups \cite{Wis04}, 
hyperbolic 3-manifolds \cite{KaMa12}, 
certain free-by-cyclic hyperbolic groups \cite{HaWi15},
random groups at low densities \cite{OlWi11},
malnormal amalgams of cubulated hyperbolic groups \cite{HsWi15},
certain one relator groups with torsion \cite{LaWi13},
graphs of free groups with cyclic edge groups \cite{HsWi10},
etc.

An action on a CAT(0) cube complex often allows one to deduce algebraic and geometric consequences on the group in question. 
One example is the recent solution of the Haken conjecture using results of Agol \cite{Ago13}, Haglund-Wise \cite{HaWi08}, Wise \cite{Wis11} and others.
This was made possible through the study of quasiconvex subgroups of hyperbolic cubulated groups.
An important observation about quasiconvex subgroups of hyperbolic cubulated groups was made by Haglund in \cite{Hag08} who proved that a quasiconvex subgroup of a cubulated hyperbolic group has a convex subcomplex on which it acts cocompactly. 
It is easy to see that this result could serve as a characterization of quasiconvex subgroups in cubulated hyperbolic groups.
A similar statement for relatively hyperbolic groups was proved independently by Sageev and Wise \cite{SaWi15}.

A finitely generated group $H$ acting on a metric space $X$ is \emph{undistorted} if some (hence every) orbit map $H\to X$ is a quasi-isometry (where $H$ is endowed with the metric of shortest word with respect to a finite generating set), and is \emph{distorted} otherwise.
In the setting of a finitely generated subgroup $H$ in a finitely generated group $G$ one can consider distortion with respect to the length induced from finite generating sets of $H$ and $G$.
If $G$ is hyperbolic, a subgroup is undistorted if and only if it is quasi-convex.

A lot of research has been done on distortion of groups.
Perhaps the easiest way of finding a distorted subgroup of a hyperbolic group is to find an infinite normal finitely generated subgroup of infinite index.
For example the surface fiber subgroup of a fibered hyperbolic 3-manifold group.
Rips \cite{Rip82} showed how to construct finitely generated normal subgroup of hyperbolic groups using a small cancellation construction.
A similar construction was carried by Wise \cite{Wis98} arranging such that the ambient group is cubulated (in fact, it is the fundamental group of a compact 2-dimensional non-positively curved square complex).
Dison and Riley \cite{DiRi13} construct examples of groups, called Hydra groups, that are fundamental groups of non-positively curved square complexes, and have very distorted free subgroups.
The distortion they achieve exceeds those found in previous works of Mitra \cite{Mit98} and the subsequent
2-dimensional CAT(-1) groups of Barnard, Brady and Dani \cite{BBD12}.

This paper aims to provide a characterization of quasiconvex subgroups via finiteness properties of their hyperplane stabilizers. 
%
Let $\Hyp{H}$ be the set of hyperplanes and $$\Hyp{IH} = \left\{ \bigcap _{i=1} ^n \hyp{h}_i \ne \emptyset\; \middle|\; n\ge 0, (\hyp{h}_1,\dots \hyp{h}_n) \in\Hyp{H}^n \right\}$$ be the set of Intersections of collections of pairwise transverse Hyperplanes.
We prove the following characterization of quasiconvex subgroups of hyperbolic cubulated groups.
\begin{theorem}
\label{thm:QC}
Let $G$ be a hyperbolic group acting properly and co-compactly on a finite dimensional CAT(0) cube complex $\CC{X}$. Let $H\le G$ be a finitely presented subgroup. Then the following are equivalent:
\begin{enumerate}
\item \label{1} The subgroup $H$ is quasiconvex in $G$.
\item \label{2} For all $\hyp{t}\in \Hyp{IH}$, the group $\Stab_H (\hyp{t})$ is finitely presented.
\item \label{3} For all $\hyp{k}\in\Hyp{H}$, $\Stab_H (\hyp{k})$ is quasiconvex in $G$.
\item \label{4} The subgroup $H$ is hyperbolic and for all $\hyp{k}\in\Hyp{H}$, $\Stab_H (\hyp{k})$ is quasiconvex in $H$.
\end{enumerate}
\end{theorem}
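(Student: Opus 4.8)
The plan is to prove the cycle of implications $(1)\Rightarrow(3)\Rightarrow(4)\Rightarrow(2)\Rightarrow(1)$, where the heavy lifting is concentrated in the last implication $(2)\Rightarrow(1)$; the other three are comparatively soft.

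For $(1)\Rightarrow(3)$: if $H$ is quasiconvex in the hyperbolic group $G$, then by Haglund's theorem $H$ acts cocompactly on a convex subcomplex $\CC{Y}\subseteq\CC{X}$. A hyperplane $\hyp{k}$ of $\CC{X}$ either misses $\CC{Y}$ or meets it in a hyperplane of $\CC{Y}$; in the latter case $\Stab_H(\hyp{k})$ is the stabilizer of a convex (hence quasiconvex) subcomplex of $\CC{Y}$, on which it acts cocompactly, so it is quasiconvex in $H$ and, since quasiconvexity is transitive in hyperbolic groups, quasiconvex in $G$. (If $\hyp{k}\cap\CC{Y}=\emptyset$ one must check $\Stab_H(\hyp{k})$ is still quasiconvex — it stabilizes the combinatorial projection of $\hyp{k}$ to $\CC{Y}$, which is again convex.) For $(3)\Rightarrow(4)$: taking $\hyp{t}=\emptyset$ (the empty intersection), condition~$(3)$ applied with the observation that $H=\Stab_H(\emptyset)$ — or more carefully, using that $H$ itself is finitely presented and splits over the $\Stab_H(\hyp{k})$ via the Sageev-type wallspace structure restricted to $H$ — shows $H$ is hyperbolic; then quasiconvexity of $\Stab_H(\hyp{k})$ in $G$ combined with quasiconvexity of $H$ in $G$ gives quasiconvexity in $H$. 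For $(4)\Rightarrow(2)$: if $H$ is hyperbolic and each $\Stab_H(\hyp{k})$ is quasiconvex in $H$, then each $\Stab_H(\hyp{t})$ for $\hyp{t}\in\Hyp{IH}$ is a finite intersection of quasiconvex subgroups of a hyperbolic group, hence quasiconvex, hence itself hyperbolic and therefore finitely presented.

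The crux is $(2)\Rightarrow(1)$, and here I expect to invoke the folding machinery that the paper has developed. The idea: since $H$ is finitely presented, one builds an $H$-cocompact (but a priori not locally finite, not proper) cube complex mapping to $\CC{X}$, and then runs the higher-dimensional Stallings folding sequence on it. Each fold is a quotient move that does not increase the domain; the folding sequence terminates precisely when the map becomes a local isometry (an immersion of nonpositively curved cube complexes), at which point the image in $\CC{X}$ is a convex $H$-cocompact subcomplex, giving quasiconvexity by Haglund's criterion again. The role of hypothesis~$(2)$ is exactly to guarantee \emph{termination}: at each stage the folds are controlled by the combinatorics of hyperplane-intersection stabilizers $\Stab_H(\hyp{t})$, and finite presentability of all these groups (together with an induction on dimension, using that the links are governed by lower-dimensional instances of the same phenomenon) provides the finiteness needed to bound the length of the folding sequence.

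The main obstacle, therefore, is establishing termination of the folding sequence under hypothesis~$(2)$: one must set up the right complexity/invariant that strictly decreases (or is eventually constant) along folds, and argue — presumably by an induction on the dimension of $\CC{X}$, where the inductive step analyzes the folding process inside hyperplanes and their stabilizers — that finite presentability of every $\Stab_H(\hyp{t})$ prevents an infinite folding sequence. A secondary technical point is the base case and the bookkeeping of which subgroups arise as stabilizers of cells after folding, to ensure the inductive hypothesis genuinely applies; I would handle this by tracking, for each cell of the folded complex, the corresponding element of $\Hyp{IH}$ and its $H$-stabilizer, and verifying these are preserved or simplified under folds.
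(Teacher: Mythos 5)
Your cycle $(1)\Rightarrow(3)\Rightarrow(4)\Rightarrow(2)\Rightarrow(1)$ has a genuine gap at $(3)\Rightarrow(4)$: knowing that each $\Stab_H(\hyp{k})$ is quasiconvex in $G$ does not by itself tell you that $H$ is hyperbolic, and the appeal to ``a Sageev-type wallspace structure restricted to $H$'' is unjustified --- $H$ acts on $\CC{X}$ (possibly very distortedly), and there is no reason the wallspace it inherits gives a proper, cocompact $H$-action on anything until one has already proved what is essentially condition (1). In the paper, hyperbolicity of $H$ is a \emph{conclusion} of the hard direction, obtained by first establishing a fifth equivalent condition (the subgroup admits a cocompact core $\CC{Y}\embedsin\CC{X}'$ in the $L^1$ sense, Definition~\ref{condition X}), and showing $(3)\Rightarrow(5)$ and $(4)\Rightarrow(5)$; from $(5)$ one gets $(1)$, hence $H$ hyperbolic, hence $(4)$.

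You have also misidentified where the real work lies. Termination of the folding sequence is not the bottleneck: Proposition~\ref{finite_folding} gives termination from finite generation of the $G$-stabilizers of hyperplanes of $\CC{X}'$, which is automatic since $G$ acts properly and cocompactly --- hypothesis (2) plays no role here. The genuinely hard point is \emph{cocompactness} of the $H$-action on the terminal complex $\CC{X}_n$. The folding sequence lands you in a complex that $L^1$-embeds $H$-equivariantly into $\CC{X}'$, but nothing in the folding process alone forces $H$ to act cocompactly on it. The paper handles this via the ``saturated tracks'' (Claim~\ref{saturated tracks} and Claim~\ref{intersecting strk}), which translate transversality of hyperplanes in $\CC{X}_n$ into coarse intersections of cosets of the stabilizers $L_k=\Stab_H(\hyp{h}^n_k)$, and then the Thin Simplex Lemma (Lemma~\ref{lem: thin simplex}, Lemma~\ref{lem: coarsely intersecting implies cond X}) --- applied under the quasiconvexity hypothesis of (3) or (4) --- shows there are finitely many $H$-orbits of cubes. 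This is exactly the step your sketch skips. The implication $(2)\Rightarrow(1)$ is then done by induction on $\dim\CC{X}'$ reducing (2) to (3), not by feeding (2) directly into a termination argument. Finally, your claim that the folding ends in ``a convex $H$-cocompact subcomplex'' contradicts the paper's own remark after Theorem~\ref{thm: undistorted in 2D}: only an $L^1$-isometric embedding is achievable, and a convex ($L^2$) core need not exist even for $\langle(1,1)\rangle\actson\mathbb{R}^2$.
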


Before discussing the proof of the theorem, let us examine it in an example. Let $G$ be the fundamental group of a closed hyperbolic $3$-manifold which is fibered over the circle, and let $H$ be the fundamental group of the surface fiber.
As we mentioned before $H$ is distorted, and forms a short exact sequence $1\rightarrow H\rightarrow G \rightarrow \Z\rightarrow 1$. 
Consider the cubulation of $G$ obtained in \cite{KaMa12,BeWi12}.
By construction, the stabilizer $L=\Stab_G(\hyp{t})$ of the hyperplane $\hyp{t}$ is a quasiconvex surface subgroup of $G$. 
Since both $L$ and $H$ are surface subgroups, if $L\le H$ then $[H:L]<\infty$, contradicting the fact that $L$ is undistorted. 
Hence $L/(H\cap L) \simeq \Z$ and thus $\Stab_H(\hyp{t}) = H\cap L$ is not finitely generated (since it corresponds to an infinite cyclic cover of a surface). This shows that (\ref{2}),(\ref{3}) and (\ref{4}) of Theorem \ref{thm:QC} do not hold (cf. Theorem \ref{thm: undistorted in 2D} which also applies to this case).
In fact, in this case we showed that \emph{every} hyperplane stabilizer in $H$ is not finitely generated.

The main tool in the proof of Theorem \ref{thm:QC} is a higher dimensional analogue of Stallings' folds.

In his seminal paper \cite{Sta83} Stallings introduced the notion of Stallings' folds in order to study finitely generated subgroups of free groups.
Given a finitely generated subgroup of a free group and a generating set, Stallings' folds provide an algorithm to replace the generating set with a minimal generating set.
His main observation is that given any combinatorial map between finite graphs, one can find a finite sequence of identifications of adjacent edges, called folds, of the domain graph such that the induced map from the resulting folded graph would be a local isometry.
In \cite{Sta91}, he extended this idea to more general G-trees.

One useful property of these foldings is the following (see \cite[Proposition p.455]{BeFe91}).
Let $G$ be a finitely generated group. Let $f$ be a $G$-equivariant simplicial map of G-trees $T\to T'$ sending edges to edges, and assume the action on $T'$ has finitely generated edge stabilizers. 
Then one can perform finitely many $G$-equivariant folds of $T$ such that the induced map on the resulting tree $T''\to T'$ is a $G$-equivariant isometric embedding, and the map $f$ is the composition of the folds and of the embedding. In \cite{BeFe91}, Bestvina and Feighn applied this property in the case where $T$ is the Dunwoody resolution of the tree $T'$ (see \cite{Dun85}).

In \cite{BeLa15}, we described a generalization of resolutions in the setting of CAT(0) cube complexes. The construction can be summarized as follow.
Let $G$ be a finitely presented group, and let $K$ be its presentation complex. Let $\CC{X}'$ be a cube complex on which $G$ acts. We build a $G$-equivariant map from $\tilde{K}$, the universal cover of $K$, to the CAT(0) cube complex $\CC{X}'$. A connected component of the preimage of a hyperplane is called a track, and can be seen as an embedded graph in $\tilde{K}$. It defines a wall on $K$, and the set of all such walls defines a CAT(0) cube complex $\CC{X}$ endowed with a natural action of $G$ and a $G$-equivariant map to $\CC{X}'$. The construction and the properties of resolutions are described more
thoroughly in \cite{BeLa15}. 


In this paper, we introduce an analogue of Stallings' folds of $G$-trees in the context of CAT(0) cube complexes.

Given a finitely presented subgroup $H$ of a cubulated hyperbolic group $G\actson \CC{X}'$, we first resolve the cube complex and get an $H$-equivariant map $\CC{X}\to\CC{X}'$, where $\CC{X}$ is the geometric resolution of the action of $H$ on $\CC{X}'$. 
We then provide conditions for factoring this resolution through a finite sequence of folds until the resulting folded complex embeds into the cubulation of $G$ with respect to the combinatorial metric on cube complexes. 
We denote such a folding sequence by
$$ \CC{X}=\CC{X}_0 \leadsto \CC{X}_1 \leadsto \ldots \leadsto \CC{X}_n\embedsin\CC{X}'$$
where each $\leadsto$ is an elementary fold, and the map $\embedsin$ is an $L^1$ embedding of cube complexes.
Finally we show that under some assumptions $H$ acts coboundedly on the resulting folded complex $\CC{X}_n$, from which we deduce that $H$ is quasiconvex in $G$.

In general, it is not clear whether one can replace ``finitely presented'' with ``finitely generated'' in Theorem \ref{thm:QC}.
Since our methods use the geometric resolution, which is only defined for finitely presented groups, we were unable to treat these cases. However,
if the group is a surface group or if the cube complex is two dimensional then we obtain the following easier criterion for undistortion.

\begin{theorem}\label{thm: undistorted in 2D}
Let $G$ be a finitely presented group that acts properly on a CAT(0) cube complex $\CC{X}'$, with finitely generated hyperplane stabilizers, and if one of the following holds:
\begin{enumerate}
\item \label{surface_group_case}the group $G$ is a surface group or a free group, or
\item \label{2_dim_case}the complex $\CC{X}'$ is 2 dimensional,
\end{enumerate}
then the orbit maps $G\to \CC{X}'$ are quasi-isometric embeddings.
\end{theorem}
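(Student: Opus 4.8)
The plan is to reuse the resolution and folding machinery behind Theorem~\ref{thm:QC}, after reducing both hypotheses (\ref{surface_group_case}) and (\ref{2_dim_case}) to the single condition that $\Stab_G(\hyp{t})$ is finitely presented for every $\hyp{t}\in\Hyp{IH}$. First I would form the geometric resolution of the action $G\actson\CC{X}'$: a CAT(0) cube complex $\CC{X}=\CC{X}_0$ carrying a cocompact $G$-action and a $G$-equivariant map $\CC{X}_0\to\CC{X}'$ (see \cite{BeLa15}). Granting that stabilizer condition, the folding argument produces a finite sequence of $G$-equivariant elementary folds $\CC{X}_0\leadsto\CC{X}_1\leadsto\cdots\leadsto\CC{X}_n\embedsin\CC{X}'$ terminating in an $L^1$-embedding. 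Since each fold is a $G$-equivariant quotient, $G$ acts cocompactly on $\CC{X}_n$; since the $L^1$-embedding is the last factor of the $G$-equivariant resolution map, it identifies $\CC{X}_n$ with a $G$-invariant subcomplex of $\CC{X}'$ which is isometric for the combinatorial metrics and on which $G$ acts properly, inheriting this from $\CC{X}'$. The \v{S}varc--Milnor lemma then makes the orbit map $G\to\CC{X}_n$ a quasi-isometry, and composing with the isometric embedding $\CC{X}_n\embedsin\CC{X}'$ exhibits every orbit map $G\to\CC{X}'$ as a quasi-isometric embedding. I would also note that the cocompactness of $G\actson\CC{X}'$ assumed in Theorem~\ref{thm:QC} is used there only to rephrase the conclusion as quasiconvexity; the folding argument itself needs only cocompactness of the resolution, which always holds, so it should apply here unchanged.

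So the real task is to verify, in each case, that $\Stab_G(\hyp{t})$ is finitely presented for every $\hyp{t}\in\Hyp{IH}$. Under (\ref{2_dim_case}), $\CC{X}'$ --- and hence the resolution $\CC{X}$ --- is $2$-dimensional; in a $2$-dimensional CAT(0) cube complex every hyperplane is a tree and the intersection of two transverse hyperplanes is a single point. Since $G$ acts properly on $\CC{X}'$, all cell stabilizers are finite, so each hyperplane stabilizer $\Stab_G(\hyp{k})$ acts on its tree with finite vertex and edge stabilizers, is therefore virtually free, and --- being finitely generated by hypothesis --- is finitely presented. The remaining elements of $\Hyp{IH}$ are intersections of two transverse hyperplanes, hence points, so have finite stabilizer. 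Under (\ref{surface_group_case}), $G$ is free or a surface group, hence coherent (finitely generated subgroups are finitely presented) and Howson (intersections of finitely generated subgroups are finitely generated). Given $\hyp{t}=\hyp{h}_1\cap\cdots\cap\hyp{h}_n\in\Hyp{IH}$, iterating the Howson property shows $\bigcap_{i=1}^n\Stab_G(\hyp{h}_i)$ is finitely generated; a short argument identifies it as a finite-index subgroup of $\Stab_G(\hyp{t})$, whence $\Stab_G(\hyp{t})$ is finitely generated, and by coherence finitely presented. In either case the machinery of the first paragraph then applies.

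I expect the main obstacle to lie in two points glossed over above. First, one must check that the folding construction of Theorem~\ref{thm:QC} really does go through once the cocompactness of $G\actson\CC{X}'$ is dropped: this is plausible because the folding is driven entirely by the (always cocompact) resolution, but it should be verified step by step that neither the resolution nor the individual folds use cocompactness downstairs. Second, in case (\ref{surface_group_case}) one needs the combinatorial fact that $\Stab_G(\hyp{t})$ contains $\bigcap_i\Stab_G(\hyp{h}_i)$ with finite index, i.e.\ that only finitely many hyperplanes realise $\hyp{t}$ as such an intersection and that $\Stab_G(\hyp{t})$ permutes them --- this is where the absence of $\Z^2$ subgroups and the Howson property of free and surface groups are really used. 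The remaining bookkeeping --- that an $L^1$-embedded, cocompact, $G$-invariant subcomplex of $\CC{X}'$ forces the $G$-orbit to be undistorted --- is routine.
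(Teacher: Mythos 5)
There is a genuine gap, and it is exactly at the point you flag as ``plausible'' and do not resolve: the claim that ``cocompactness of the resolution\ldots always holds.'' It does not. The discussion preceding Theorem~\ref{maintheorem1} only establishes that $G$ has \emph{finitely many orbits of hyperplanes} in the geometric resolution (because there are finitely many tracks in the finite complex $\simp{K}$); this does not imply the action of $G$ on the dual cube complex $\CC{X}_0$ is cobounded, since the dual complex can have ``extra'' cubes/vertices not hit by the map from $\usimp{K}^{(0)}$. That is why Theorem~\ref{maintheorem1} carries the cobounded-resolution condition as an explicit hypothesis, and why the paper's proof of the present theorem reduces precisely to checking it: in case~(\ref{surface_group_case}) this is extracted from Sageev's thesis \cite{Sag95}, and in case~(\ref{2_dim_case}) from Theorem~1.1 of \cite{BeLa15}. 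Without supplying these inputs, the step ``since each fold is a $G$-equivariant quotient, $G$ acts cocompactly on $\CC{X}_n$'' has no starting point: Lemma~\ref{cobounded} propagates coboundedness through a fold, but it needs coboundedness on $\CC{X}_0$ to begin with.

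Your second paragraph (reducing both cases to ``$\Stab_G(\hyp{t})$ is finitely presented for all $\hyp{t}\in\Hyp{IH}$'') is also a dead end for this theorem. That condition is the driver of the implication~(\ref{2})$\Rightarrow$(\ref{1}) in Theorem~\ref{thm:QC}, but the proof there (via the induction on dimension, the Thin Simplex Lemma, and Lemma~\ref{lem: coarsely intersecting implies cond X}) leans essentially on hyperbolicity of $G$ and cocompactness of $G\actson\CC{X}'$. Neither is assumed here: $\Z^2$ acting cocompactly on $\R^2$ with its standard square tiling satisfies the hypotheses of case~(\ref{2_dim_case}) but is not hyperbolic. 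So finite presentability of the $\Hyp{IH}$-stabilizers does not, on its own, let you run the Theorem~\ref{thm:QC} machinery. The verifications you sketch (virtually free hyperplane stabilizers in 2D, Howson plus coherence for surface and free groups) are fine facts, but they are not what is needed: what is needed is the coboundedness of $G\actson\CC{X}_0$, which you must take from the cited results rather than from a stabilizer-finiteness argument.
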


We remark that in the case of trees it was sufficient for the map to be a local isometric embedding for it to be a (global) isometric embedding. While such a statement is true for the CAT(0) metric on the cube complex (i.e, the $L^2$ metric), it is not true for the $L^1$ metric.
On the other hand, in the setting of the above theorem we are forced to use the $L^1$ metric, since having an $L^2$ combinatorial embedding would imply the existence of a convex cocompact subcomplex core for $G$. Such a subcomplex does not exist even for the simple example of the cyclic group generated by the translation by $(1,1)$ on $\mathbb{R}^2$ with its standard tiling by unit squares.


Theorem \ref{thm: undistorted in 2D} Case \ref{2_dim_case} has been independently proved using similar ideas of Stallings' folds for VH square complexes in Samuel Brown's PhD thesis \cite{Bro16}.

In some sense, this paper can be considered as a continuation of our previous paper  \cite{BeLa15} on resolutions and finiteness properties of resolutions, in that we try to study the properties of the resolution map via Stallings' folding sequences.
%

\subsection*{Acknowledgments}
The authors would like to thank Michah Sageev for his helpful comments on this manuscript, and to Dani Wise for fruitful discussions. We would also like to thank the referee for spotting a mistake in a previous version of the paper and for helpful suggestions. The second author acknowledges the support received by the ETH Zurich Postdoctoral Fellowship Program and the Marie Curie Actions for People COFUND Program.


\section{Preliminaries }\label{preliminaries}

\subsection{CAT(0) cube complexes}
We begin by a short survey of definitions concerning CAT(0) cube complexes. For further details see, for example, \cite{Saa12}.

A \emph{cube complex} is a collection of euclidean unit cubes of various dimensions in which faces have been identified isometrically. 

A simplicial complex is \emph{flag} if every $(n+1)$-clique in its 1-skeleton spans a $n$-simplex.
A cube complex is \emph{non-positively curved} (NPC) if the link of every vertex is a flag simplicial complex. It is a \emph{\CCC} if moreover it is simply connected.

A cube complex $\CC{X}$ can be equipped with two natural metrics, the euclidean and the $L^1$ (or combinatorial) metric. With respect to the former, the complex $\CC{X}$ is NPC if and only if it is NPC \`{a} la Gromov (see \cite{Gro87}). However, the latter is more natural to the combinatorial structure of CAT(0) cube complexes described below.

Given a cube $\CCc{C}$ and an edge $\CCc{e}$ of $\CCc{C}$. The midcube of $\CCc{C}$ associated to $\CCc{e}$ is the convex hull of the midpoint of  $\CCc{e}$ and the midpoints of the edges parallel to $\CCc{e}$.
The \emph{hyperplane} associated to $\CCc{e}$ is the smallest subset containing the midpoint of $\CCc{e}$ and such that if it contains a midpoint of an edge it also contains all the midcubes containing it.
Every hyperplane $\hyp{h}$ in a \CCC $\CC{X}$ separates $\CC{X}$ into exactly two components (see \cite{NiRe98}) called the \emph{halfspaces} associated to $\hyp{h}$. 
Thus, a hyperplane can also be abstractly identified with its pair of complementary halfspaces. 
For a \CCC $\CC{X}$ we denote by $\Hyp{H}=\Hyp{H}(\CC{X})$ the set of all hyperplanes in $\CC{X}$, and by $\Hs{H}=\Hs{H}(\CC{X})$ the set of all halfspaces. 
For each halfspace $\hs{h}\in \Hs{H}$ we denote by $\comp{\hs{h}}\in\Hs{H}$ its complementary halfspace, and by $\hyp{h}\in\Hyp{H}$ its bounding hyperplane, which we also identify with the pair $\{\hs{h},\comp{\hs{h}}\}$. Conversely, a choice of a halfspace $\hs{h}$ for a hyperplane $\hyp{h}$ is called an \emph{orientation} of $\hyp{h}$.
We denote the inclusion of halfspaces by $\le$.

We briefly review the terminology that will be used throughout the paper.


Two distinct hyperplanes $\hyp{h},\hyp{k}\in\Hyp{H}$ can be either \emph{disjoint} or \emph{transverse}. The latter is denoted by $\hyp h \pitchfork \hyp k$. 

Two distinct halfspaces $\hs{h},\hs{k}\in\Hs{H}$ can be in one of the following arrangements:
\begin{description}
	\item [nested] if $\hs{h}\le\hs{k}$ or $\hs{k}\le\hs{h}$.
	\item [facing] if $\hs{h}>\comp{\hs{k}}$, or equivalently, if both $\hyp{h}\subset\hs{k}$ and $\hyp{k}\subset \hs{h}$.
	\item [transverse] if $\hyp{h}$ and $\hyp{k}$ are transverse. In this case, we denote $\hs{h} \pitchfork \hs{k}$.
	\item [incompatible] if $\hs{h}$ and $\hs{k}$ have empty intersection, or equivalently, if $\comp{\hs{h}}\ge\hs{k}$. Otherwise, they are said to be \emph{compatible}.
\end{description}

A hyperplane in a \CCC \emph{separates} two points if they belong to different halfspaces of the hyperplane. 
A hyperplane $\hyp{h}$ \emph{separates} two hyperplanes $\hyp{h'}$ and $\hyp{h''}$ if it separates any point of $\hyp{h'}$ from any point of $\hyp{h''}$, or equivalently if there exists an orientation of each of them such that $\hs{h'} < \hs{h} < \hs{h''}$.
\begin{definition}[\inseparable{$\Hyp{A}$} and facing]
Given a set of hyperplanes $\Hyp{A}$, two distinct hyperplanes are \emph{\inseparable{$\Hyp{A}$}}, if no hyperplane in $\Hyp{A}$ separates them.
The collection $\Hyp{A}$ is said to be \emph{facing} if any two distinct hyperplanes in $\Hyp{A}$ are disjoint and \inseparable{$\Hyp{A}$}, or equivalently if the hyperplanes have an orientation for which every pair is facing.
\end{definition}

\begin{remark} \label{existence of inseparable}
	Since any two hyperplanes in a CAT(0) cube complex are separated by finitely many hyperplanes, for every non empty set of hyperplanes $\Hyp{A}$,  and every hyperplane $\hyp{h}$, there exists a hyperplane $\hyp{k}\in\Hyp{A}$ such that $\hyp{h}$ and $\hyp{k}$ are \emph{\inseparable{$\Hyp{A}$}}.
\end{remark}

\subsection{Pocsets to \CCC}\label{pocsetstoccc}

We adopt Roller's viewpoint of Sageev's construction. Recall from \cite{Rol98} that a \emph{pocset} is a triple $(\Hs{P},\le,\comp{})$ of a poset $(\Hs{P},\le)$ and an order reversing involution $\comp{}:\Hs{P}\to\Hs{P}$ satisfying $\hs{h}\neq\comp{\hs{h}}$ and $\hs{h}$ and $\comp{\hs{h}}$ are incomparable for all $\hs{h}\in\Hs{P}$. A pocset is \emph{locally finite} if for any pair of elements, the set of elements in between them is finite. In what follows we assume that all pocsets are locally finite.

The set of halfspaces $\Hs{H}$ of a \CCC has a natural pocset structure given by the inclusion relation and the complement operation $\comp{}$. Roller's construction starts with a locally finite pocset $(\Hs{P},\le,\comp{})$ of finite width  (see \cite{Saa12} for definitions) and constructs a \CCC $\CC{X}(\Hs{P})$ such that $(\Hs{H}(\CC{X}),\le, \comp{})=(\Hs{P},\le,\comp{})$. We briefly recall the construction, for more details see \cite{Rol98} or \cite{Saa12}.

An \emph{ultrafilter} $U$ on $\Hs{P}$ is a subset verifying $\# \left(U\cap \left\{\hs{k},\comp{\hs{k}}\right\}\right) = 1$ for all $\hs{k}\in \Hs{P}$ and such that for all $\hs{h} \in U$, if $\hs{h} \leq \hs{k}$ then $\hs{k}\in U$. If we denote $\hyp{h} =\{ \hs{h},\comp{\hs{h}} \}$ and  $\Hyp{P}=\left\{\hyp{h} \middle| \hs{h}\in\Hs{P} \right\}$, then $U$ can be viewed as a choice function $U:\Hyp{P}\to\Hs{P}$. Throughout the paper we will use both viewpoints.

An ultrafilter $U$ satisfies the \emph{Descending Chain Condition} (DCC) if any descending chain $\hs{k}_1 > \hs{k}_2 > \dots> \hs{k}_n > \dots$  of element of $U$ has finite length. 
The vertices of $\CC{X}(\Hs{P})$ are the DCC ultrafilters of $\Hs{P}$. 
Two vertices of $\CC{X}(\Hs{P})$ are connected by an edge if the corresponding ultrafilters differ on a single pair in $\Hyp{P}=\left\{ \{ \hs{h},\comp{\hs{h}} \} \middle| \hs{h}\in\Hs{P} \right\}$. 
An $n$-cube is added to every one skeleton of an $n$-cube. Or equivalently, any $n$-cube corresponds to $2^n$ distinct DCC ultrafilters that differ on a set of $n$ hyperplanes in $\Hyp{P}$.

We remark that an ultrafilter can be defined equivalently as a subset of $\Hs{H}$ whose elements are pairwise compatible and it is maximal for this property.


\section{Quotients of pocsets}

The basic construction that enables one to fold hyperplanes is the introduction of quotients of CAT(0) cube complexes.
The details of this construction will be given in the language of pocsets, and thus might seem cumbersome and lacking a geometric intuition. However the basic idea remains quite simple; given an admissible equivalence relation $\sim$ on the hyperplanes of a CAT(0) cube complex $\CC{X}$ we would like to introduce a pocset structure on the quotient $\Hs{H}(\CC{X})/\hspace{-.15cm}\sim$ such that the dual CAT(0) cube complex $\CC{X}/\hspace{-.15cm}\sim:=\CC{X}(\Hs{H}(\CC{X})/\hspace{-.15cm}\sim)$ is the `smallest' CAT(0) cube complex to have a combinatorial map  $\CC{X}\to\CC{X}/\hspace{-.15cm}\sim$ for which the preimages of hyperplanes are equivalence classes of $\sim$.
To explain the geometric outcome of this construction we illustrate it with an example.
The example also shows an interesting phenomenon that may occur; the dimension of the cube complex can increase when passing to a quotient.

\begin{example}\label{example of quotient}
Consider the CAT(0) cube complex $\CC{X}$ shown on the top of Figure \ref{square}. 
The hyperplanes of $\CC{X}$ are shown as colored midcubes. 
Let $\sim$ be the equivalence relation on the hyperplanes of $\CC{X}$ whose classes are shown by colors, that is, two hyperplanes are $\sim$-equivalent if they have the same color.
The quotient CAT(0) cube complex $\CC{X}/\hspace{-.15cm}\sim$ is shown on the bottom of Figure \ref{square} and the image of the map $\CC{X}\to\CC{X}/\hspace{-.15cm}\sim$  is shown with bold lines.


	\begin{figure}[ht!]
		\begin{center}
			\def\svgwidth{0.7\textwidth}
			\input{./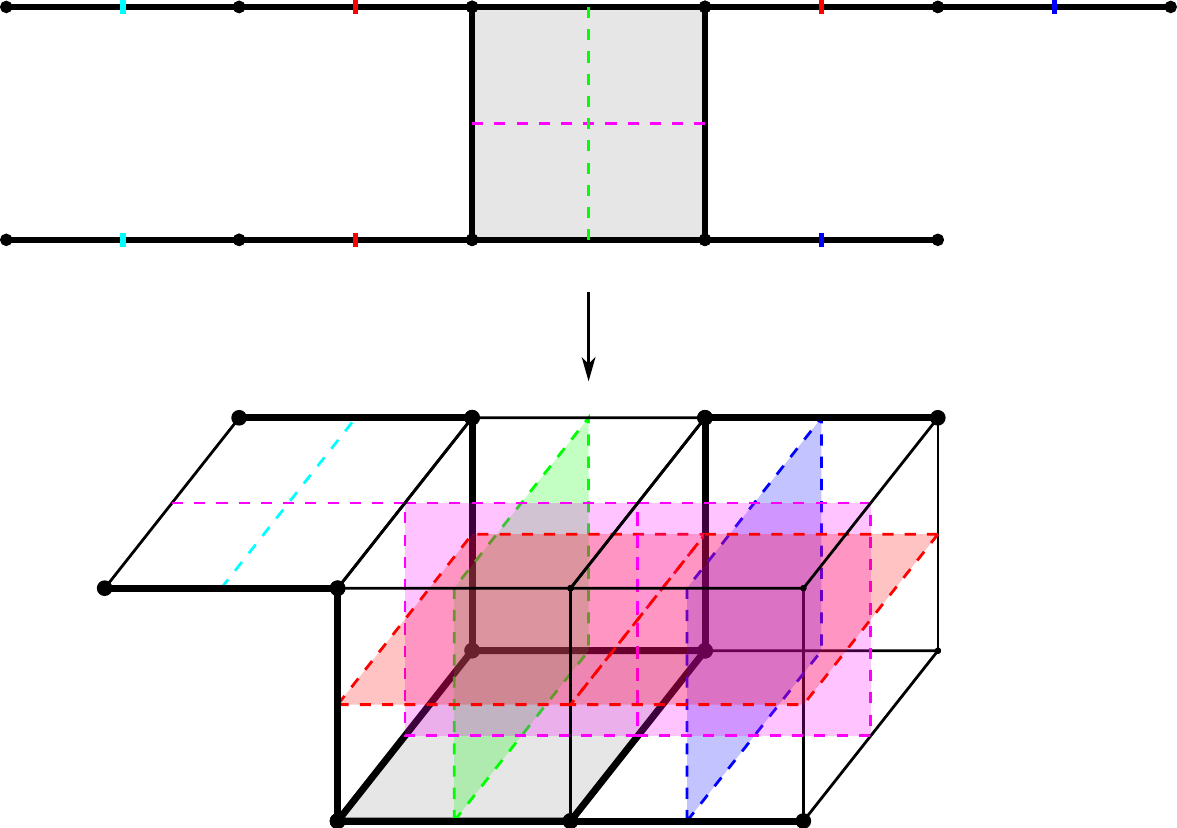_tex}
			\caption{The CAT(0) cube complex $\CC{X}$ and the quotient $\CC{X}/\hspace{-.15cm}\sim$ obtained by identifying hyperplanes with the same color.}
			\label{square}
		\end{center}
	\end{figure}
\end{example}

For the quotient to carry a pocset structure one needs to restrict to a subclass of equivalence relations.

\begin{definition}
Let $(\Hs{H},\leq ,\comp{})$ be a pocset. An equivalence relation $\sim$ on $\Hs H$ is an \emph{admissible equivalence} if it satisfies the following $\forall \hs{h},\hs{k}\in \Hs{H}$:
\begin{enumerate}[label=(AER\arabic*)]
	\item \label{selfcomp}$\hs{h}\nsim\comp{\hs{h}}$,
	\item \label{compinv}if $\hs{h}\sim\hs{k}$ then $\comp{\hs{h}}\sim\comp{\hs{k}}$, \footnote{By properties \ref{selfcomp} and \ref{compinv} the equivalence relation $\sim$ on $\Hs{H}$ defines an equivalence relation on $\Hyp{H}$ which we also denote by $\sim$.}
	\item \label{selfcross}if $\hs{h}\pitchfork\hs{k}$ then $\hs{h}\nsim\hs{k}$, and 
	\item \label{welloriented}if $\hyp{h}\sim\hyp{k}$ are distinct and \inseparable{$[\hyp{h}]$} hyperplanes (where $[\hyp{h}]=[\hyp{k}]$ denotes the equivalence class\footnote{We will sometimes use a subscript notation $[\hyp{h}]=[\hyp{h}]_{\sim}$ if the equivalence relation is not clear from the context.} of $\hyp{h}$ and $\hyp{k}$), then $\hs{h}\sim\hs{k}$ where $\hs{k}$ and $\hs{h}$ are facing.
\end{enumerate}
\end{definition}

\begin{remark}
	An admissible equivalence relation $\Hs{H}$ is uniquely determined by its induced relation on $\Hyp{H}$. In fact, every equivalence relation on the hyperplanes $\Hyp{H}$ in which transverse hyperplanes are not equivalent determines a unique admissible equivalence relation on halfspaces $\Hs{H}$. 
\end{remark}

We define a pocset structure on $\Hs{H}/\hspace{-.15cm}\sim$ in the following way. 
\begin{itemize}
	\item The complementation $\comp{}:\Hs{H}/\hspace{-.15cm}\sim\to\Hs{H}/\hspace{-.15cm}\sim$ is defined by $\comp{[\hs{h}]}=[\comp{\hs{h}}]$.
	\item The poset structure is defined by $[\hs{h}]< [\hs{k}]$ if any choice of representatives $\hs{h},\hs{k}$ of $[\hs{h}],[\hs{k}]$ respectively have disjoint bounding hyperplanes $\hyp{h},\hyp{k}$, and, if moreover $\hyp{h},\hyp{k}$ are \inseparable{($[\hyp{h}] \cup [\hyp{k}]$)}, then $\hs{h}< \hs{k}$. We denote $[\hs{h}]\le[\hs{k}]$ if they are equal or $[\hs{h}]<[\hs{k}]$.
\end{itemize}

In Lemma \ref{lem: quotient by AER gives pocset} we will show that this definition indeed gives a pocset structure. For this  we will need the following two easy lemmas.

\begin{observation}
\label{observation}
Let $\hs{h},\hs{k}\in\Hs{H}$ be such that $[\hs{h}]<[\hs{k}]$ then
\begin{itemize}
\item If $\hyp{h},\hyp{k}$ are \inseparable{$[\hyp{k}]$} then $\hyp{h}\subset \hs{k}$.
\item Similarly, if $\hyp{h},\hyp{k}$ are \inseparable{$[\hyp{h}]$} then $\hyp{k}\subset \comp{\hs{h}}$.\qed
\end{itemize}
\end{observation}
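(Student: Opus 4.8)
\textbf{Proof plan for Lemma \ref{observation}.}
The plan is to unravel the definition of the order $[\hs{h}]<[\hs{k}]$ on $\Hs{H}/\hspace{-.15cm}\sim$ and combine it with the separation properties of hyperplanes in a \CCC. Recall that $[\hs{h}]<[\hs{k}]$ means that \emph{every} pair of representatives has disjoint bounding hyperplanes, and that whenever the representatives' hyperplanes are moreover \inseparable{$([\hyp{h}]\cup[\hyp{k}])$}, we have $\hs{h}<\hs{k}$ as honest halfspaces. The two bullets are symmetric under the complementation involution (replace $\hs{h}$ by $\comp{\hs{h}}$, $\hs{k}$ by $\comp{\hs{k}}$, and note $[\comp{\hs{h}}]>[\comp{\hs{k}}]$ is the same data), so it suffices to prove the first bullet.

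First I would fix $\hyp{h},\hyp{k}$ with $[\hs{h}]<[\hs{k}]$ which are \inseparable{$[\hyp{k}]$}, and I want $\hyp{h}\subset\hs{k}$. If in addition $\hyp{h},\hyp{k}$ are \inseparable{$[\hyp{h}]$} — equivalently \inseparable{$([\hyp{h}]\cup[\hyp{k}])$}, since they are already \inseparable{$[\hyp{k}]$} — then by definition of the order we directly get $\hs{h}<\hs{k}$, hence $\hyp{h}\subset\hs{k}$ and we are done. So the real content is the case where some hyperplane of $[\hyp{h}]$ separates $\hyp{h}$ from $\hyp{k}$. Here I would use Remark \ref{existence of inseparable}: among all hyperplanes of $[\hyp{h}]$ there is one, call it $\hyp{h}'$ (with the appropriately oriented halfspace $\hs{h}'$, using \ref{compinv}), that is \inseparable{$[\hyp{h}]$} from $\hyp{k}$; and since $\hyp{h}'\sim\hyp{h}$ we also have $[\hs{h}']=[\hs{h}]<[\hs{k}]$, so $\hyp{h}'$ and $\hyp{k}$ are automatically disjoint, hence \inseparable{$[\hyp{k}]$} as well (nothing can separate disjoint inseparable-from-$[\hyp{k}]$... more carefully: $\hyp{h},\hyp{k}$ inseparable{$[\hyp{k}]$} and $\hyp{h}\sim\hyp{h}'$, combined with $\hyp{h}'$ disjoint from $\hyp{k}$, forces $\hyp{h}',\hyp{k}$ inseparable{$[\hyp{k}]$} — one shows any $\hyp{m}\in[\hyp{k}]$ separating $\hyp{h}'$ from $\hyp{k}$ would, together with the inseparability data, produce a contradiction). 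Thus $\hyp{h}',\hyp{k}$ are \inseparable{$([\hyp{h}]\cup[\hyp{k}])$} and the definition of $<$ gives $\hs{h}'<\hs{k}$.

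It then remains to promote $\hs{h}'<\hs{k}$ to $\hyp{h}\subset\hs{k}$. Since $\hyp{h}\sim\hyp{h}'$ and both are in the class $[\hyp{h}]$, and $\hyp{h}$ was chosen so that \emph{some} hyperplane of $[\hyp{h}]$ does separate $\hyp{h}$ from $\hyp{k}$, I would invoke axiom \ref{welloriented}: walking from $\hyp{h}'$ toward $\hyp{h}$ through a chain of consecutively \inseparable{$[\hyp{h}]$} hyperplanes of the class, \ref{welloriented} forces the halfspaces to be coherently oriented (each consecutive pair facing with the $\sim$-related orientations), so that $\hs{h}$ contains $\hs{h}'$ on the side away from $\hyp{k}$; combined with $\hyp{h}'\subset\hs{k}$ (from $\hs{h}'<\hs{k}$) and the fact that no hyperplane of $[\hyp{k}]$ separates $\hyp{h}$ from $\hyp{k}$, one deduces $\hyp{h}$ lies in $\hs{k}$. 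I expect the main obstacle to be exactly this last bookkeeping: carefully chaining \ref{welloriented} along the equivalence class and checking that the orientations propagate in the direction consistent with being contained in $\hs{k}$, rather than in $\comp{\hs{k}}$ — the sign, so to speak. Once the orientation propagation lemma along a class of pairwise-inseparable hyperplanes is set up cleanly (which is essentially what \ref{welloriented} is designed to give), both bullets fall out, the second by applying the first to the complementary halfspaces.
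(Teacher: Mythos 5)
The reduction to the first bullet via complementation and the treatment of the easy case (where $\hyp{h},\hyp{k}$ are already \inseparable{$([\hyp{h}]\cup[\hyp{k}])$}) are both correct. But there is a genuine gap in the remaining case, and you have also misdiagnosed where the difficulty lies. Remark \ref{existence of inseparable} produces some $\hyp{h}'\in[\hyp{h}]$ with $\hyp{h}',\hyp{k}$ \inseparable{$[\hyp{h}]$}, but gives no control over which side of $\hyp{k}$ the hyperplane $\hyp{h}'$ lies on; nothing in your argument rules out that $\hyp{k}$ separates $\hyp{h}$ from $\hyp{h}'$. In that scenario $\hs{h}'<\hs{k}$ would force $\hs{k}$ to be the halfspace of $\hyp{k}$ containing $\hyp{h}'$, i.e.\ \emph{not} containing $\hyp{h}$ --- the opposite of what you need --- so the ``propagation via \ref{welloriented}'' step cannot simply run from $\hyp{h}'$ to $\hyp{h}$. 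Your parenthetical attempt to show $\hyp{h}',\hyp{k}$ are \inseparable{$[\hyp{k}]$} likewise does not close: if $\hyp{m}\in[\hyp{k}]$ separated $\hyp{h}'$ from $\hyp{k}$, the $[\hyp{k}]$-inseparability of $\hyp{h},\hyp{k}$ only yields that $\hyp{m}$ separates $\hyp{h}$ from $\hyp{h}'$, which is not an evident contradiction (ruling it out turns out to require the full strength of $[\hs{h}]<[\hs{k}]$, via its own argument). The stated obstacle --- careful chaining of \ref{welloriented} --- is a red herring; the real obstacle is positioning the auxiliary hyperplane.

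A different choice of auxiliary hyperplane avoids all of this and makes the claim a one-step consequence of the definitions. Take $\hyp{h}_0=\hyp{h}$ if no element of $[\hyp{h}]$ separates $\hyp{h}$ from $\hyp{k}$; otherwise take $\hyp{h}_0\in[\hyp{h}]$ to be the hyperplane separating $\hyp{h}$ from $\hyp{k}$ that is nearest to $\hyp{k}$ (possible since, by local finiteness, only finitely many hyperplanes separate $\hyp{h}$ from $\hyp{k}$). By construction $\hyp{h}_0$ lies between $\hyp{h}$ and $\hyp{k}$ (or equals $\hyp{h}$), no element of $[\hyp{h}]$ separates $\hyp{h}_0$ from $\hyp{k}$ (by minimality), and no element of $[\hyp{k}]$ separates $\hyp{h}_0$ from $\hyp{k}$ either (such an element would separate $\hyp{h}$ from $\hyp{k}$, contradicting the hypothesis). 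So $\hyp{h}_0,\hyp{k}$ are \inseparable{$([\hyp{h}]\cup[\hyp{k}])$}, and $[\hs{h}]<[\hs{k}]$ gives $\hs{h}_0<\hs{k}$. Since the bounding hyperplanes are disjoint and distinct, $\hyp{k}\subset\comp{\hs{h}_0}$; thus $\hs{h}_0$ is the halfspace of $\hyp{h}_0$ not containing $\hyp{k}$, which contains $\hyp{h}$, and $\hyp{h}\subset\hs{h}_0\subset\hs{k}$. No appeal to \ref{welloriented} or any orientation propagation is needed; the second bullet then follows by the complementation symmetry exactly as you observed.
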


\begin{observation}\label{unfolding2}
Given two halfspaces $\hs{h}$ and $\hs{l}$ and an equivalence class $[\hs{k}]$  such that $[\hs{h}] < [\hs{k}]$ and $[\hs{k}] < [\hs{l}]$. Then there is an element $\hyp{k}' \in [\hyp{k}]$ that separates $\hyp{h}$ and $\hyp{l}$. 
If moreover, $\hs{h}<\hs{l}$ then there exists $\hs{k}'\in[\hs{k}]$ such that $\hs{h}<\hs{k}'<\hs{l}$.
\end{observation}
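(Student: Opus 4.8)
The plan is to extract the separating hyperplane from the transitivity hypothesis by combining the definition of $<$ on $\Hs{H}/\hspace{-.15cm}\sim$ with the finiteness principle of Remark~\ref{existence of inseparable}. First I would spell out what the hypotheses $[\hs{h}]<[\hs{k}]$ and $[\hs{k}]<[\hs{l}]$ give us at the level of actual hyperplanes: for \emph{every} choice of representatives, the bounding hyperplanes are disjoint, and for \inseparable{($[\hyp h]\cup[\hyp k]$)} (resp. \inseparable{($[\hyp k]\cup[\hyp l]$)}) choices the halfspace inclusions hold. The subtlety is that a single representative $\hyp k'$ must simultaneously be comparable to $\hyp h$ on one side and to $\hyp l$ on the other, so I cannot naively take the \inseparable{$[\hyp k]$} representative relative to $\hyp h$ and relative to $\hyp l$ separately — they might differ.

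The key step is to choose $\hyp k'\in[\hyp k]$ that is \inseparable{$[\hyp k]$} from the pair $\{\hyp h,\hyp l\}$ simultaneously; more precisely, using Remark~\ref{existence of inseparable} (with the non-empty set $[\hyp k]$ and, say, the hyperplane $\hyp h$) I get some $\hyp k'\in[\hyp k]$ which is \inseparable{$[\hyp k]$} from $\hyp h$, and then I want to argue it is automatically well-positioned with respect to $\hyp l$ as well, or iterate the argument. Concretely: take $\hyp k'$ \inseparable{$[\hyp k]$} from $\hyp h$. By \Obs{}~\ref{observation} applied to $[\hs{h}]<[\hs{k}']$ (noting $[\hs k']=[\hs k]$, and that $\hyp h,\hyp k'$ being \inseparable{$[\hyp k]$} is what that bullet needs), we get $\hyp h\subset\hs k'$. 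Now among the representatives of $[\hyp k]$ that lie, suitably oriented, on the $\hs k'$-side, pick one — again via Remark~\ref{existence of inseparable} — that is \inseparable{$[\hyp k]$} from $\hyp l$; call it $\hyp k''$. Then \Obs{}~\ref{observation} applied to $[\hs k'']<[\hs l]$ with $\hyp k'',\hyp l$ \inseparable{$[\hyp k]$} gives $\hyp l\subset\comp{\hs{k}''}$, i.e. $\hyp k''$ separates appropriately; and one checks $\hyp k''$ still separates $\hyp h$ from $\hyp l$ because $\hyp h$ was on the $\hs k'$-side and $\hyp k''\geq\hs k'$ in the relevant orientation (here axiom \ref{welloriented} controls the orientations within the class $[\hyp k]$, forcing facing orientations for \inseparable{$[\hyp k]$} pairs). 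Assembling, $\hyp h\subset\hs{k}''$ and $\hyp l\subset\comp{\hs k''}$, which is exactly the statement that $\hyp k''$ separates $\hyp h$ and $\hyp l$.

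For the ``moreover'' part, assume $\hs h<\hs l$. Using the $\hyp k''$ produced above (or re-running the selection keeping track of orientations), orient $\hyp k''$ as $\hs k''$ with $\hs h\le\hs k''$; the first part already gives $\hs h<\hs k''$ and $\hs l>\comp{\hs k''}$, and it remains to upgrade the latter to $\hs k''<\hs l$. Since $\hyp k''$ and $\hyp l$ are disjoint (from $[\hs k]<[\hs l]$) and not facing relative to $\hs h$ (because $\hs h\le\hs k''$ and, if $\hs h\le\hs l$ were incompatible with $\hs k''<\hs l$, the only alternative for two disjoint hyperplanes with $\hs h$ on one side of each would be $\comp{\hs k''}<\hs l$, which combined with $\hs h<\hs k''$ and $\hs h<\hs l$ is impossible as it would put $\hs h$ in the empty set $\hs k''\cap\comp{\hs k''}$), we conclude $\hs k''<\hs l$. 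Chaining, $\hs h<\hs k''<\hs l$ with $\hs k''\in[\hs k]$, as desired.

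The main obstacle I anticipate is precisely the orientation bookkeeping in the middle step: making sure that the representative $\hyp k'$ chosen to be well-behaved with respect to $\hyp h$ can be replaced by, or coexists with, a representative $\hyp k''$ well-behaved with respect to $\hyp l$, without the two choices pulling in opposite directions. This is where axiom~\ref{welloriented} does the real work — it guarantees that \inseparable{$[\hyp k]$} representatives of a single class all face each other coherently, so that ``being on the $\hs k'$-side'' is a consistent notion across the class — and I would want to isolate that coherence as the crux of the argument, likely invoking \Obs{}~\ref{observation} once more to convert the facing relation into the needed halfspace inclusions.
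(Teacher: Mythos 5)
Your opening move — choosing $\hyp k'\in[\hyp k]$ that is \inseparable{$[\hyp k]$} from $\hyp h$ and invoking \Obs~\ref{observation} to get $\hyp h\subset\hs k'$ — matches the paper. But the next step has a genuine gap. You want a representative $\hyp k''$ that is simultaneously \inseparable{$[\hyp k]$} from $\hyp l$ and positioned so that $\hyp h\subset\hs k''$, and you justify the positioning by saying that axiom \ref{welloriented} ``forces facing orientations for \inseparable{$[\hyp k]$} pairs,'' so that the class is coherently oriented around $\hs k'$. This is where the argument breaks: for a general admissible equivalence relation the class $[\hyp k]$ need not be a facing collection. Axiom \ref{welloriented} only constrains \emph{inseparable} pairs, so $[\hyp k]$ can contain a chain $\hyp k_0,\hyp k_1,\hyp k_2$ (with $\hyp k_1$ between the others); there the $[\hs k]$-orientations are, e.g., $\{x>0\},\{x<1\},\{x>2\}$, and $\hs k_0$ and $\hs k_2$ are nested, not facing. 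Consequently ``being on the $\hs k'$-side'' is not a well-defined coherent notion across the class, the subset you want to select from is not one to which Remark~\ref{existence of inseparable} applies, and the inequality $\hs k'' \ge \hs k'$ that your argument relies on can simply fail (indeed, when $\hyp k'$ and $\hyp k''$ happen to be \inseparable{$[\hyp k]$}, \ref{welloriented} gives that $\hs k'$ and $\hs k''$ \emph{face}, which is the opposite of nested). Note that the fact that classes \emph{are} facing is exactly Lemma~\ref{orientation} — but that is proved only for the $\sim$ coming from an \emph{elementary fold}, not for an arbitrary admissible $\sim$, and \Obs~\ref{unfolding2} is stated and used for the latter.

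The paper sidesteps the whole issue by staying with the single $\hyp k'$ and splitting into two cases: either $\hyp k'$ is also \inseparable{$[\hyp k]$} from $\hyp l$, in which case \Obs~\ref{observation} gives $\hyp l\subset\comp{\hs k'}$ and you are done, or else some $\hyp k''\in[\hyp k]$ separates $\hyp k'$ from $\hyp l$; this $\hyp k''$ cannot cross $\hyp h$ (the classes $[\hyp h]$ and $[\hyp k]$ are disjoint-representative classes because $[\hs h]<[\hs k]$) and cannot separate $\hyp h$ from $\hyp k'$ (inseparability), so it must separate $\hyp h$ from $\hyp l$. No orientation bookkeeping for $\hyp k''$ is needed in the first part — the separation is read off directly from the position of $\hyp k''$ relative to the inseparable pair $\{\hyp h,\hyp k'\}$. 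I'd suggest adopting this case split; your treatment of the ``moreover'' clause (re-choosing a separating element that is \inseparable{$[\hyp k]$} from $\hyp h$ and then nailing down the orientation using $\hs h<\hs l$) is essentially the paper's and is sound once the first part is fixed.
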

\begin{proof}
Let $\hyp{k}$ be a representative of $[\hyp{k}]$ such that $\hyp{k}$ and $\hyp{h}$ are \inseparable{$[\hyp{k}]$}. 
By Remark \ref{existence of inseparable}, such an element exists and \Obs \ref{observation} implies that $\hs{k}$ contains $\hyp{h}$. There are two cases:
either $\hyp{k}$ and $\hyp{l}$ are \inseparable{$[\hyp{k}]$} 
which by \Obs \ref{observation} implies that $\comp{\hs{k}}$ contains $\hyp{l}$, which proves that $\hyp{k}$ separates $\hyp{h}$ and $\hyp{l}$, 
or there is a hyperplane $\hyp{k}'\in [\hyp{k}]$ such that $\hyp{k}'$ separates $\hyp{k}$ and $\hyp{l}$, but since $\hyp{k}'$ cannot intersect $\hyp{h}$ and cannot separate $\hyp{h}$ and $\hyp{k}$, it must also separate $\hyp{h}$ and $\hyp{l}$.
In both cases we found a hyperplane in $[\hyp{k}]$ that separates $\hyp{h}$ and $\hyp{l}$.

If moreover $\hs{h}<\hs{l}$, let $\hyp{k}'$ be a hyperplane that separates $\hyp{h}$ and $\hyp{l}$ and such that $\hyp{k}'$ and $\hyp{h}$ are \inseparable{$[\hyp{k}]$}. Then from Lemma \ref{observation} and $\hs{h}<\hs{l}$ it follows that $\hs{h}<\hs{k}'<\hs{l}$.
\end{proof}
%

\begin{lemma}\label{lem: quotient by AER gives pocset}
Let $(\Hs{H},\leq ,\comp{})$ be a locally finite pocset, and let $\sim$ be an admissible equivalence relation on $\Hs{H}$. Then, the triple $(\Hs{H}/\hspace{-.15cm}\sim,\leq ,\comp{})$ is a locally finite pocset.
\end{lemma}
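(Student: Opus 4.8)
The plan is to verify the three defining properties of a pocset for the triple $(\Hs{H}/\hspace{-.15cm}\sim,\le,\comp{})$: (a) $\comp{}$ is a well-defined order-reversing involution; (b) $[\hs{h}]\neq\comp{[\hs{h}]}$ and the two are incomparable; and (c) the order $\le$ is genuinely a partial order (reflexive, antisymmetric, transitive); and finally (d) local finiteness. First I would check that $\comp{}$ is well-defined: this is exactly axiom \ref{compinv}. That it is an involution is immediate from $\comp{\comp{\hs{h}}}=\hs{h}$ downstairs. For (b), $[\hs{h}]\neq[\comp{\hs{h}}]$ is precisely axiom \ref{selfcomp}, and incomparability of $[\hs{h}]$ and $[\comp{\hs{h}}]$ follows because a hyperplane is never disjoint from itself, so the very first clause in the definition of $<$ fails when we pick the representatives $\hs h$ and $\comp{\hs h}$ (same bounding hyperplane).

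The substance is in (c), and the main obstacle will be \textbf{transitivity}. Reflexivity is trivial (we allowed $[\hs h]\le[\hs h]$ by fiat). For well-definedness of $<$ I must check that the defining condition does not depend on the chosen representatives — here I would lean on Remark \ref{existence of inseparable}, which guarantees that inside each class one can always find a representative inseparable from a given hyperplane, together with \Obs \ref{observation}, which says that once we are in the inseparable situation the containment $\hyp h\subset\hs k$ (resp.\ $\hyp k\subset\comp{\hs h}$) is forced; combining these shows any two representatives with disjoint bounding hyperplanes actually satisfy $\hs h<\hs k$ after passing to inseparable witnesses, so the relation is representative-independent. For antisymmetry, if $[\hs h]<[\hs k]$ and $[\hs k]<[\hs h]$ then picking representatives that are simultaneously inseparable (again Remark \ref{existence of inseparable} applied to the union of the two classes) and invoking \Obs \ref{observation} twice yields both $\hs h<\hs k$ and $\hs k<\hs h$ for genuine halfspaces, a contradiction.

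Transitivity is where \Obs \ref{unfolding2} does the real work. Suppose $[\hs h]<[\hs k]<[\hs l]$. To show $[\hs h]<[\hs l]$ I must first show $\hyp h$ and $\hyp l$ are disjoint; here the point is that \Obs \ref{unfolding2} produces a hyperplane $\hyp k'\in[\hyp k]$ separating $\hyp h$ from $\hyp l$, and two hyperplanes separated by a third are disjoint. Then, in the case that $\hyp h,\hyp l$ are additionally inseparable in $[\hyp h]\cup[\hyp l]$, I need $\hs h<\hs l$ for the chosen orientations; I would first establish, using \Obs \ref{observation} with the inseparable representatives coming from $[\hs h]<[\hs k]$ and $[\hs k]<[\hs l]$, that the relevant orientations give $\hs h<\hs l$ (not merely disjointness), and then invoke the second half of \Obs \ref{unfolding2}. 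Care is needed to keep track of orientations — this bookkeeping of which halfspace is which is the fiddly part, and is exactly why the two preliminary observations were isolated. Finally, local finiteness of the quotient: the set of classes between $[\hs h]$ and $[\hs l]$ injects (via \Obs \ref{unfolding2}, which for each intermediate class $[\hs k]$ exhibits a representative separating $\hyp h$ from $\hyp l$) into the set of hyperplanes of $\CC X$ separating $\hyp h$ from $\hyp l$, which is finite since $\Hs{H}$ was assumed locally finite; hence only finitely many classes can lie strictly in between.
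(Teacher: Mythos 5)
Your plan mirrors the paper's proof closely: the pocset axioms come from \ref{selfcomp}--\ref{compinv} and the definition, antisymmetry uses $([\hyp{h}]\cup[\hyp{k}])$-inseparable representatives, transitivity combines Lemma~\ref{unfolding2} (to produce a separating representative of $[\hyp{k}]$, hence disjointness of $\hyp{h},\hyp{l}$) with Lemma~\ref{observation} (to pin down orientations and get $\hs{h}<\hs{l}$), and local finiteness injects intermediate classes into a finite interval, again via Lemma~\ref{unfolding2}. The one stray step is your closing appeal to ``the second half of Lemma~\ref{unfolding2}'' inside the transitivity argument --- once Lemma~\ref{observation} has delivered $\hs{h}<\hs{l}$ for the inseparable representatives you are already done; the second half of Lemma~\ref{unfolding2} is what the paper uses for local finiteness, not transitivity.
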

\begin{proof}

Let us begin by proving that $(\Hs{H}/\hspace{-.15cm}\sim,\leq )$ is a poset. The reflexivity is clear from the definition. 

The relation $\le$ is antisymmetric since if $[\hs{h}]< [\hs{k}]$ and $[\hs{k}]<[\hs{h}]$ then any representatives $\hs{h},\hs{k}$ of $[\hs{h}],[\hs{k}]$ respectively such that $\hyp{h},\hyp{k}$ are \inseparable{($[\hyp{h}]\cup [\hyp{k}]$)} we must have $\hs{h}<\hs{k}< \hs{h}$ which contradicts the antisymmetry of $(\Hs{H},<)$.


The relation $\le$ is transitive: let $[\hs{h}]< [\hs{k}]<[\hs{l}]$ and let $\hyp{h}, \hyp{l}$ be representatives of $[\hyp{h}],[\hyp{l}]$ respectively. 
By \Obs \ref{unfolding2}, the hyperplanes $\hyp{h}$ and $\hyp{l}$ are separated by a hyperplane in $[\hyp{k}]$ and thus in particular disjoint. 

Now, assume furthermore, that $\hyp{h}$ and $\hyp{l}$ are \inseparable{($[\hyp{h}] \cup [\hyp{l}]$)}.
Let $\hyp{k}$ be a representative of $[\hyp{k}]$ that separates $\hyp{h}$ and $\hyp{l}$ such that $\hyp{k}$ and $\hyp{h}$ are \inseparable{$[\hyp{k}]$}. 
 By Remark \ref{existence of inseparable} such an element exists. By \Obs \ref{observation} and since $\hyp{k}$ and $\hyp{l}$ are on the same side of $\hyp{h}$, we deduce that $\hyp{l}\subset \comp{\hs{h}}$. Similarly, $\hyp{h}\subset \hs{l}$. Thus $\hs{h}< \hs{l}$.


The complementary operation is well defined by property \ref{compinv}, and defines an involution such that $[\hs{h}]\ne\comp{[\hs{h}]}$ because of property \ref{selfcomp}.
By the definition of the poset it follows that  $[\hs{h}]$ is incomparable with $\comp{[\hs{h}]}$ and that the complementary operation is order reversing. It is thus a pocset.

We now prove that the pocset is locally finite. Let $[\hs{h}]\leq [\hs{l}]\in\Hs{H}/\hspace{-.15cm}\sim$, consider the set $([\hs{h}],[\hs{l}])=\{[\hs{k}]|[\hs{h}]\leq [\hs{k}]\leq [\hs{l}]\}$. Let $\hs{h}\leq \hs{l}$ be a fixed pair of representatives of $[\hs{h}]\leq [\hs{l}]$ that are \inseparable{($[\hs{h}]\cup [\hs{l}]$)}, by \Obs \ref{unfolding2}, any element in $([\hs{h}],[\hs{l}])$ must have a representative that lies in $(\hs{h},\hs{l})=\{\hs{k}|\hs{h}\leq \hs{k}\leq \hs{l}\}$, thus, by local finiteness of $(\Hs{H},\leq )$, the set $([\hs{h}],[\hs{l}])$ is finite. 
\end{proof}

As we have seen in Example \ref{example of quotient}, hyperplanes in the quotient can be transverse even if their equivalence classes have no transverse hyperplanes. The next lemma explains when this happens.

\begin{lemma} \label{preimage of crossing hyps}
Let $(\Hs{H},\leq ,\comp{})$ be a pocset, and let $\sim$ be an admissible equivalence relation on $\Hs{H}$.
Assume that $[\hs{h}]$ and $[\hs{k}]$ are transverse in $\Hs{H}\modsim$. Then one of the following happens.
\begin{itemize}
\item There exists $\hs{h} \in [\hs{h}]$ and $\hs{k} \in [\hs{k}]$ that are transverse.
\item There exists $\hs{h}_1$ and $\hs{h}_2$ in $[\hs{h}]$ and $\hs{k} \in [\hs{k}]$ such that $\hyp{k}$ separates $\hyp{h}_1$ and $\hyp{h}_2$.
\item There exists $\hs{k}_1$ and $\hs{k}_2$ in $[\hs{k}]$ and $\hs{h} \in [\hs{h}]$ such that $\hyp{h}$ separates $\hyp{k}_1$ and $\hyp{k}_2$.
\end{itemize}
\end{lemma}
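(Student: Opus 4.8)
The plan is to prove the contrapositive: assuming that none of the three bulleted conclusions holds, I will show that $[\hs h]$ and $[\hs k]$ are \emph{not} transverse in $\Hs{H}\modsim$, i.e. they are nested, facing, or incompatible. So suppose: (a) no representative $\hs h\in[\hs h]$ is transverse to a representative $\hs k\in[\hs k]$; (b) no hyperplane of $[\hyp k]$ separates two hyperplanes of $[\hyp h]$; and (c) no hyperplane of $[\hyp h]$ separates two hyperplanes of $[\hyp k]$. By \ref{selfcross} and assumption (a), every pair $\hyp h\in[\hyp h]$, $\hyp k\in[\hyp k]$ is disjoint in $\CC X$. The goal is then to produce a single consistent orientation, that is, to decide the order relation between $[\hs h]$ and $[\hs k]$ using the recipe in the definition of the poset structure on $\Hs{H}\modsim$, and to check it is well defined.

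First I would fix, using Remark \ref{existence of inseparable}, representatives $\hyp h_0\in[\hyp h]$ and $\hyp k_0\in[\hyp k]$ that are \inseparable{($[\hyp h]\cup[\hyp k]$)}; such a pair exists because any two hyperplanes are separated by only finitely many hyperplanes (apply the remark twice and note that the resulting pair is inseparable for the union). Since $\hyp h_0$ and $\hyp k_0$ are disjoint, they admit an orientation with $\hs h_0<\hs k_0$ (equivalently $\comp{\hs h_0}>\comp{\hs k_0}$, or the facing/incompatible variants). The heart of the argument is to show this relation does not depend on the inseparable pair chosen, which — by the poset definition — is exactly what is needed to conclude that $[\hs h]$ and $[\hs k]$ are comparable (or facing, or incompatible) rather than transverse. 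Suppose $\hyp h_1\in[\hyp h]$ and $\hyp k_1\in[\hyp k]$ is another \inseparable{($[\hyp h]\cup[\hyp k]$)} pair. I want to compare the orientation forced on $(\hyp h_1,\hyp k_1)$ with that on $(\hyp h_0,\hyp k_0)$. The key point is that assumptions (b) and (c) give strong control: since no hyperplane of $[\hyp k]$ separates $\hyp h_0$ from $\hyp h_1$, property \ref{welloriented} applies to any consecutive inseparable pair inside $[\hyp h]$ and propagates a coherent orientation along $[\hyp h]$; similarly for $[\hyp k]$. Concretely, I would interpolate: by Remark \ref{existence of inseparable} one can pass from $\hyp h_0$ to $\hyp h_1$ through hyperplanes of $[\hyp h]$ that are pairwise \inseparable{$[\hyp h]$} consecutively, and each such step either leaves the side of $\hyp k_0$ unchanged or, by (b), cannot cross $\hyp k_0$ and so again preserves the side; and the facing orientations agree by \ref{welloriented}. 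Doing the same on the $[\hyp k]$ side, and using \Obs \ref{observation} to translate "inseparable within a class and disjoint" into a containment of one hyperplane in a halfspace of the other, I get that the orientation forced on $(\hyp h_1,\hyp k_1)$ is the same combinatorial type (nested with the same direction, facing, or incompatible) as on $(\hyp h_0,\hyp k_0)$. Hence the relation between $[\hs h]$ and $[\hs k]$ prescribed by the poset definition is unambiguous, so they are comparable, facing, or incompatible — in particular not transverse, a contradiction.

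I expect the main obstacle to be the bookkeeping in the well-definedness step: carefully arranging the interpolation between two inseparable pairs so that at each elementary move exactly one of $[\hyp h]$, $[\hyp k]$ changes by a step between consecutive \inseparable{}-within-its-class hyperplanes, and verifying that assumptions (b), (c) together with \ref{welloriented} and \Obs \ref{observation} force the orientation to be preserved at every step — including ruling out a "flip" that would make $[\hs h]\pitchfork[\hs k]$ in the quotient. A secondary subtlety is to make sure the three listed outcomes in the lemma are genuinely the only obstructions, i.e. that if all three fail then the naive orientation recipe produces no inconsistency of the transverse kind; this amounts to checking that an inconsistency in the recipe for $[\hs h]$ versus $[\hs k]$ necessarily manifests as either a transverse pair of representatives (violating (a)) or a separation of the kind in (b) or (c). Once well-definedness is in hand, the conclusion that non-transversality holds is immediate from the poset definition, so I would organize the write-up as: reduce to the contrapositive, extract disjointness of all representative pairs, fix an inseparable pair and read off its orientation, then prove invariance under change of inseparable pair via the interpolation argument.
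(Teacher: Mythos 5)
You take the contrapositive exactly as the paper does, and you correctly identify the tools: assumption (a) together with \ref{selfcross} gives pairwise disjointness of every $\hyp h\in[\hyp h]$ and $\hyp k\in[\hyp k]$, and \ref{welloriented} is the mechanism that forces a coherent orientation. But the interpolation step at the heart of your plan is not established, and it is precisely where all the work lies. You want a chain $\hyp h_0=\hyp a_0,\hyp a_1,\ldots,\hyp a_m=\hyp h_1$ inside $[\hyp h]$ whose consecutive terms are \inseparable{$[\hyp h]$}, along which you push the orientation with \ref{welloriented}; you neither construct such a chain nor control how the partner representative in $[\hyp k]$ must change as you walk --- a point you yourself flag as ``the main obstacle.'' As written, nothing prevents an inconsistency from accumulating along the chain, and such an inconsistency is exactly what transversality of $[\hs h]$ and $[\hs k]$ in the quotient would look like, so the gap is not cosmetic.

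The paper's proof avoids the interpolation altogether by a global argument. It sets $\Hyp I$ (resp.\ $\Hyp J$) to be the representatives of $[\hyp h]$ (resp.\ $[\hyp k]$) that are \inseparable{$([\hyp h]\cup[\hyp k])$} from some representative of the other class --- these are exactly the pairs that govern the arrangement of $[\hs h]$ and $[\hs k]$ in the quotient poset --- and proves in one stroke, using (b), (c) and the inseparability of the chosen pairs, that $\Hyp I\cup\Hyp J$ is a facing collection. Once that is known, any two elements of $\Hyp I$ (resp.\ $\Hyp J$) are automatically \inseparable{$[\hyp h]$} (resp.\ \inseparable{$[\hyp k]$}), so \ref{welloriented} applies uniformly, and the unique facing orientation of $\Hyp I\cup\Hyp J$ shows that every inseparable pair in $\Hyp I\times\Hyp J$ is oriented the same way, whence $[\hs h]$ and $[\hs k]$ are not transverse. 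If you want to salvage the interpolation idea, the facing-collection statement for $\Hyp I\cup\Hyp J$ is the lemma you should isolate and prove first; it packages exactly the well-definedness you are trying to extract one step at a time.
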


\begin{proof}
Let us assume that $[\hs{h}],[\hs{k}]$ do not satisfy any of the cases of the lemma, and prove that $[\hyp{h}]\not\pitchfork[\hyp{k}]$.
Let $\Hs{I}$ (resp. $\Hs{J}$) be the the set of elements in $[\hs{h}]$ (resp. $[\hs{k}]$) that are \inseparable{$[\hyp{h}] \cup [\hyp{k}]$} from an element in $[\hs{k}]$ (resp. $[\hs{h}]$). 
By assumption, the hyperplanes in $\Hyp{I}\cup\Hyp{J}$ are not transverse.
Note that the arrangement of $[\hs{h}]$ and $[\hs{k}]$ in $\Hs{H}\modsim$ only depends on the orientations of elements of $\Hs{I} \cup \Hs{J}$.
Let us first prove that $\Hyp{I} \cup \Hyp{J}$ is a facing collection of hyperplanes. Assume otherwise that a hyperplane $\hyp{l}_1$ separates $\hyp{l}_2$ and $\hyp{l}_3$ in $\Hyp{I} \cup \Hyp{J}$. We are in one of the following case.
\begin{itemize}
\item $[\hyp{l}_2]=[\hyp{l}_3]$. We can assume that $\hyp{l}_2$ and $\hyp{l}_3$ belong to $\Hyp{I}$. Then since no element of $[\hyp{k}]$ separates $\Hyp{I}$, we must have $\hyp{l}_1 \in \Hyp{I}$. Now both $\hyp{l}_2$ and $\hyp{l}_3$ are \inseparable{$[\hyp{h}] \cup [\hyp{k}]$} with elements $\hyp{l}'_2$ and $\hyp{l}'_3$ in $\Hyp{J}$. The inseparability implies that $\hyp{l}_1$ separates $\hyp{l}'_2$ and $\hyp{l}'_3$, a contradiction.
\item $[\hyp{l}_2]\neq[\hyp{l}_3]$. We can assume that $\hyp{l}_1$ and $\hyp{l}_2$ belong to $\Hyp{I}$ and $\hyp{l}_3$ belongs to $\Hyp{J}$. As before, we can find $\hyp{l}'_2$ in $\Hs{J}$ such that $\hyp{l}_1$ separates $\hyp{l}'_2$ and $\hyp{l}_3$, again a contradiction.
\end{itemize}

Now, applying property \ref{welloriented} on the elements of $\Hyp{I} \cup \Hyp{J}$, elements of $\Hs{I}$ are either all facing towards elements of $\Hyp{J}$ or all facing away from $\Hyp{J}$. The same happens for $\Hs{J}$. In all the cases, by definition of the pocset structure on a quotient, $[\hs{h}]$ and $[\hs{k}]$ are not transverse.
\end{proof}

Note that the converse to Lemma \ref{preimage of crossing hyps} does not hold in general (\emph{e.g.} the red and light blue hyperplanes in Example \ref{example of quotient}).


\section{Maps between pocsets}

In what follows, quotients will arise from maps between pocsets.
For classical Stallings' $G$-tree folding sequences the maps considered are simplicial ($G$-equivariant) maps of trees. 
The analogous notion in our setting will be called \emph{resolutions}.
Similarly, the analogous notion of injective simplicial maps of trees -- $L^1$ isometric embeddings -- will be simply called \emph{embeddings}.
The next definitions make these notion precise in the language of pocsets, using the notion of \emph{admissible maps}.

\begin{definition}
Let $(\Hs{H},\leq ,\comp{})$ and $(\Hs{H}',\leq ,\comp{})$ be pocsets. A map $f:\Hs{H}\to\Hs{H}'$ is  an \emph{admissible map} of pocsets if the following hold:
\begin{enumerate}[label=(AM\arabic*)]
	\item \label{AMcompinv}for all $\hs{h}\in\Hs{H}$, $f(\comp{\hs{h}})=\comp{f(\hs{h})}$,  \footnote{Property \ref {AMcompinv} shows that $f$ induces a well-defined map on hyperplanes, which we denote as well by $f:\Hyp{H} \to \Hyp{H}'$ }
	\item \label{AMcrossinghyps}for all $\hyp{h}\pitchfork\hyp{k}\in\Hs{H}$, $f(\hyp{h})\pitchfork f(\hyp{k})$,
	\item \label{AMfacinghyps}for all $\hs{h},\hs{k}\in\Hs{H}$ facing halfspaces, that satisfy $f(\hyp{h})=f(\hyp{k})$ and that are \inseparable{$f^{-1}(f(\hyp{h}))$}, we have $f(\hs{h})=f(\hs{k})$, and
	\item \label{AMcompactible_image} every $\hyp{h}'\in\Hyp{H}'\setminus f(\Hyp{H})$ has an orientation $\hs{h}'$ that is compatible with all the halfspaces in $f(\Hs{H})$.
\end{enumerate}

An admissible map $f:\Hs{H}\to\Hs{H}'$ is an \emph{embedding of pocsets} if $f$ is injective and for all $\hs{h},\hs{k}\in\Hs{H}$, if $f(\hs{h})\leq f(\hs{k})$ then $\hs{h}\leq \hs{k}$. We denote such a map by $\Hs{H}\embedsin\Hs{H}'$. Note that by injectivity property \ref{AMfacinghyps} is superfluous in this case.

An admissible map $f:\Hs{H}\to\Hs{H}'$ is a \emph{resolution of pocsets} if $f$ satisfies that the map $\Hs{H}/{\sim_f}\to \Hs{H}'$ is an embedding of pocsets, where $\sim_f$ is the admissible equivalence relation defined by $\hs{h}\sim_f \hs{k}$ if $f(\hs{h})=f(\hs{k})$.
\end{definition}

\begin{remark}
The quotient map $f:\Hs{H}\to\Hs{H}/\hspace{-.15cm}\sim$ for an admissible equivalence relation $\sim$ is a resolution.
\end{remark}

The motivating example for this definition is the geometric resolution of an action of a finitely presented group on a \CCC, as the following lemma shows. The proof of the lemma is straight forward from the definition of the geometric resolution.

\begin{lemma}\label{previouspaper}
Let $G$ be a finitely presented group, and let $\CC{X}'$ be a finite dimensional CAT(0) cube complex on which $G$ acts. Let $\CC{X}$ be the geometric resolution of $\CC{X}'$ described in \cite{BeLa15}. Then the map $f:\Hs{H}(\CC{X})\to\Hs{H}(\CC{X}')$ is a resolution of pocsets.\qed
\end{lemma}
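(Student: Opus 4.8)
The plan is to unpack the definition of the geometric resolution from \cite{BeLa15} and verify the four axioms \ref{AMcompinv}--\ref{AMcompactible_image} for the induced map on halfspaces, together with the embedding condition on the quotient pocset $\Hs{H}(\CC{X})/{\sim_f}$. Recall that a halfspace of $\CC{X}$ is a connected component of the complement of a track, where a track is a connected component of the preimage $F^{-1}(\hyp{h}')$ of a hyperplane $\hyp{h}'$ of $\CC{X}'$ under the resolving map $F\colon \tilde K\to \CC{X}'$; the wall structure on $K$ is exactly the $G$-invariant collection of tracks, and $\CC{X}=\CC{X}(\Hs{H}(\CC{X}))$ is its dual. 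The map $f$ sends a halfspace bounded by a track $\utrk{t}\subset F^{-1}(\hyp{h}')$ to the halfspace of $\hyp{h}'$ on the corresponding side; so by construction $f(\hyp{h})=\hyp{h}'$ whenever $\hyp{h}$ is (the dual hyperplane of) a track lying over $\hyp{h}'$.

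First I would check \ref{AMcompinv}: complementation of halfspaces in $\CC{X}$ corresponds to swapping the two sides of a track, and $F$ maps the two sides of the track $\utrk{t}\subset F^{-1}(\hyp{h}')$ to the two sides of $\hyp{h}'$, so $f(\comp{\hs{h}})=\comp{f(\hs{h})}$ is immediate. For \ref{AMcrossinghyps}: two hyperplanes of $\CC{X}$ are transverse exactly when the corresponding two tracks $\utrk{t}_1,\utrk{t}_2$ (and their four complementary quarter-regions) are all nonempty and ``cross'' in $\tilde K$; I would argue, using the structure of the resolution map near a point of $\utrk t_1\cap \utrk t_2$, that $F$ maps a transverse crossing of tracks to a transverse crossing of the corresponding hyperplanes in $\CC{X}'$ — this should follow from the local model of $F$ on cells of $\tilde K$ and the fact that hyperplanes of $\CC{X}'$ separate. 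Property \ref{AMcompactible_image} is essentially the statement that any hyperplane $\hyp{h}'$ of $\CC{X}'$ that is missed by $F$ (has empty preimage, i.e.\ is not crossed by any track) has a preferred side, namely the side containing the image of $\tilde K$ (or more precisely the side not separated off); since $\tilde K$ is connected, $F(\tilde K)$ lies in one halfspace of such a hyperplane, and that halfspace is compatible with every halfspace in $f(\Hs{H}(\CC{X}))$ because each of the latter contains part of $F(\tilde K)$.

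The main work is \ref{AMfacinghyps} together with the embedding property of the quotient, i.e.\ showing $\Hs{H}(\CC{X})/{\sim_f}\embedsin \Hs{H}(\CC{X}')$. The equivalence $\sim_f$ identifies two tracks precisely when they lie over the same hyperplane of $\CC{X}'$ and, moreover, $F$ sends them to the same \emph{oriented} halfspace; admissibility of $\sim_f$ (axioms \ref{selfcomp}--\ref{welloriented}) must be checked, and \ref{welloriented} is exactly the content of \ref{AMfacinghyps}: if $\hyp h,\hyp k$ are distinct tracks over the same $\hyp{h}'$, are inseparable within the class $f^{-1}(\hyp{h}')$, and their halfspaces $\hs h,\hs k$ face each other, then — since nothing from $f^{-1}(\hyp{h}')$ lies between them — the regions of $\tilde K$ they cut off map into opposite sides of $\hyp{h}'$ only if $F(\hs h)$ and $F(\hs k)$ agree, which forces $f(\hs h)=f(\hs k)$. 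For the embedding of the quotient I would invoke Lemma \ref{lem: quotient by AER gives pocset} to get that $\Hs{H}(\CC{X})/{\sim_f}$ is a pocset, then verify injectivity of the induced map (immediate: classes are by definition the nonempty fibers of $f$) and the order-reflecting property $f(\hs h)\le f(\hs k)\Rightarrow [\hs h]\le[\hs k]$, which unwinds to: if the halfspace of $\hyp{h}'$ selected by $F$ on track $\utrk t_{\hs h}$ is contained in that selected on $\utrk t_{\hs k}$, then for inseparable representatives the tracks are nested accordingly in $\tilde K$ — this is where the precise geometry of how tracks sit in the resolution is used.

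I expect the order-reflecting / embedding step to be the main obstacle: it is the higher-dimensional incarnation of the fact that a Stallings fold factors through an isometric embedding, and it requires knowing that the nesting pattern of tracks in $\tilde K$ is faithfully recorded — after collapsing the fibers of $F$ — by the nesting pattern of hyperplanes in $\CC{X}'$. Concretely the danger is ``hidden crossings'' or reversals created by the identification $\sim_f$; ruling these out uses Lemma \ref{preimage of crossing hyps} (a crossing in the quotient forces either a genuine crossing downstairs or a separating phenomenon inside a class, neither of which is created by the resolution map by the previous paragraphs) and the inseparability bookkeeping from \Obs \ref{observation} and \Obs \ref{unfolding2}. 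Granting the detailed properties of the geometric resolution established in \cite{BeLa15}, each of these checks is routine, which is why the lemma is stated as straightforward; the proof is essentially a translation of those properties into the pocset axioms above, so I would simply cite \cite{BeLa15} for the geometric input and present the verification of \ref{AMcompinv}--\ref{AMcompactible_image} and the quotient-embedding as a short sequence of observations.
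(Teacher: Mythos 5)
The paper gives no proof of this lemma beyond the remark that it is ``straightforward from the definition of the geometric resolution,'' so your overall plan — unwind the construction from \cite{BeLa15}, check \ref{AMcompinv}--\ref{AMcompactible_image}, then check that $\Hs{H}(\CC{X})/{\sim_f}\embedsin\Hs{H}(\CC{X}')$ — is the right one, and your treatment of \ref{AMcompinv}, \ref{AMcrossinghyps} and \ref{AMcompactible_image} is fine (for \ref{AMcrossinghyps} you should add the observation that two transverse tracks necessarily lie over \emph{distinct} hyperplanes of $\CC{X}'$, since distinct components of the same preimage $F^{-1}(\hyp{h}')$ are disjoint and hence not transverse as walls).

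The last paragraph, however, contains a real error that obscures the crux. You claim that a ``separating phenomenon inside a class'' is ``not created by the resolution map.'' That is false: it is entirely typical for the geometric resolution that two tracks over the same hyperplane $\hyp{h}'$ of $\CC{X}'$ are separated in $\usimp{K}$ by a track over a different hyperplane $\hyp{k}'$, even when $\hyp{h}'$ and $\hyp{k}'$ are disjoint in $\CC{X}'$ — indeed this is exactly the redundancy that the folding sequence is designed to remove. Moreover, Lemma \ref{preimage of crossing hyps} gives only a \emph{necessary} condition for transversality in the quotient, so by itself it cannot ``rule out'' hidden crossings. What the embedding step actually requires is a direct argument that nesting in $\Hs{H}'$ controls the arrangement of inseparable representatives upstairs: given $f(\hs{h})<f(\hs{k})$ and tracks $\trk{t}_{\hyp{h}_1},\trk{t}_{\hyp{k}_1}$ that are \inseparable{$(f^{-1}(\hyp{h}')\cup f^{-1}(\hyp{k}'))$}, one must produce a component $C$ of $\usimp{K}\setminus(F^{-1}(\hyp{h}')\cup F^{-1}(\hyp{k}'))$ adjacent to both tracks (this is where inseparability is used), note that $F(C)$ is connected, avoids $\hyp{h}'\cup\hyp{k}'$, and touches both, hence lies in the middle component $\comp{\hs{h}'}\cap\hs{k}'$, and read off from which side of each track $C$ sits that $\hs{h}_1<\hs{k}_1$. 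Without spelling out this connectivity argument — which is the only place the actual geometry of $\usimp{K}$ enters — the embedding claim is not justified, and this is precisely the part your sketch hands off to ``the inseparability bookkeeping'' without supplying it.
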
 


The following lemma describes how resolutions between pocsets can be realized as maps between the associated CAT(0) cube complexes.

\begin{lemma} \label{AMtoCCCmap}
Let $f:\Hs{H}\to\Hs{H}'$ be a resolution, and let $\CC{X},\CC{X}'$ be the CAT(0) cube complexes associated with $\Hs{H},\Hs{H}'$ respectively. 
There is a CAT(0) cube subcomplex $\CC{Z}\subseteq\CC{X}'$ that decomposes as a product $\CC{Z}=\CC{Z}_1\times\CC{Z}_2$, such that $f(\Hs{H})=\Hs{H}(\CC{Z}_1)$, and the map $f$ induces a canonical combinatorial (and hence $L_1$-distance-non-increasing) map $F:\CC{X}\to\CC{Z}_1$. 
In particular, for every choice of vertex $\CCv{z}\in\CC{Z}_2$ the map $f$ induces a map $F:\CC{X}\to\CC{Z}_1\times\{\CCv{z}\}\subseteq\CC{Z}\subseteq\CC{X}'$.

If moreover, $f$ is an embedding of pocsets, the induced map $F$ is an $L_1$-embedding.
\end{lemma}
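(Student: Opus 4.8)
The plan is to work entirely at the level of pocsets and vertices (DCC ultrafilters), since the cube complex $\CC{X}(\Hs{P})$ is determined by $\Hs{P}$ and combinatorial maps correspond to maps of pocsets. First I would set $\Hs{P}_1 = f(\Hs{H})$, which by \ref{AMcompinv} and \ref{AMcrossinghyps} is a sub-pocset of $\Hs{H}'$ closed under $\comp{}$, and set $\Hs{P}_2 = \Hs{H}'\setminus \Hs{P}_1$. Using property \ref{AMcompactible_image}, every hyperplane not in the image has an orientation compatible with all of $f(\Hs{H})$; I would show this forces $\Hs{P}_2$ (with these preferred orientations) to be \emph{facing} relative to $\Hs{P}_1$, i.e.\ no halfspace of $\Hs{P}_1$ and no halfspace of $\Hs{P}_2$ are nested or transverse in a way that obstructs a product decomposition. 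Concretely, one checks that the convex subcomplex $\CC{Z}\subseteq\CC{X}'$ spanned by the vertices whose ultrafilters restrict on $\Hs{P}_2$ to a fixed compatible orientation is convex, and that $\CC{Z}$ decomposes as $\CC{Z}_1\times\CC{Z}_2$ where $\CC{Z}_1=\CC{X}(\Hs{P}_1)$ (all hyperplanes of $\Hs{P}_1$ are transverse to a chosen subcollection realizing $\CC{Z}_2$, or more precisely $\Hs{P}_1$ and its complement within the halfspace pocset of $\CC{Z}$ partition the hyperplanes into two mutually transverse sets). The product structure comes from the standard fact that a pocset which splits as a disjoint union of two sub-pocsets, each closed under $\comp{}$, with every pair across the two sets transverse, has dual cube complex equal to the product of the two duals.

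Next I would construct the map $F\colon \CC{X}\to\CC{Z}_1$. A vertex of $\CC{X}$ is a DCC ultrafilter $U$ on $\Hs{H}$; I would define $F(U)$ to be the ultrafilter on $\Hs{P}_1=f(\Hs{H})$ given by $f_*(U) = \{\, f(\hs{h}) : \hs{h}\in U\,\}$. Well-definedness as a choice function on hyperplanes of $\Hs{P}_1$ uses \ref{AMcompinv}; that $f_*(U)$ is consistent (an ultrafilter) uses that $f$ is a resolution, so $\Hs{H}/{\sim_f}\embedsin\Hs{H}'$ and order relations are reflected, hence incompatibilities are reflected; that $f_*(U)$ is DCC follows since a descending chain in $f_*(U)$ pulls back, via the embedding $\Hs{H}/{\sim_f}\embedsin\Hs{H}'$ and local finiteness, to a descending chain of classes, and then (using that $U$ is DCC and each class meets $U$) to a descending chain in $U$. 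This makes $F$ a map on vertices; one checks it extends to a combinatorial map by verifying that if $U,U'$ differ on a single hyperplane $\hyp{h}$ then $f_*(U),f_*(U')$ either agree or differ on the single hyperplane $f(\hyp{h})$, so edges map to edges or are collapsed, and similarly for cubes. Combinatorial maps are $L^1$-distance-non-increasing by definition. Composing with the inclusion $\CC{Z}_1\times\{\CCv{z}\}\hookrightarrow\CC{Z}\subseteq\CC{X}'$ gives the asserted map into $\CC{X}'$, and one notes this realizes $f$ in the sense that the hyperplane-pullback is as prescribed.

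Finally, when $f$ is an embedding of pocsets, $\sim_f$ is trivial and $F$ is injective on vertices (distinct ultrafilters $U\neq U'$ differ on some $\hyp{h}$, and $f$ injective plus order-reflecting gives $f_*(U)\neq f_*(U')$); moreover $F$ does not collapse edges, since $U,U'$ adjacent and differing on $\hyp{h}$ forces $f_*(U),f_*(U')$ to differ on $f(\hyp{h})$. Then $F$ is an isometric embedding for the $L^1$ metric because the $L^1$-distance between two vertices is the number of hyperplanes separating them, $F$ identifies the separating hyperplanes of $U,U'$ with a subset of those of $f_*(U),f_*(U')$ (namely exactly $f$ of them, by injectivity), and conversely any hyperplane of $\CC{Z}_1$ separating $f_*(U),f_*(U')$ lies in $f(\Hs{H})$ hence is $f(\hyp{h})$ for $\hyp{h}$ separating $U,U'$ by the order-reflecting property; so the separating sets have equal cardinality and $F$ preserves $L^1$ distance.

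The main obstacle I expect is the product decomposition $\CC{Z}=\CC{Z}_1\times\CC{Z}_2$: one must extract from the single axiom \ref{AMcompactible_image} (one compatible orientation per missing hyperplane, compatible with the \emph{image} only) the stronger statement that, after passing to the convex subcomplex $\CC{Z}$ cut out by these orientations, the remaining hyperplanes of $\CC{Z}$ split cleanly into the image hyperplanes and a mutually-transverse complementary family. The subtlety is that two missing hyperplanes may be mutually incompatible or nested even though each is individually compatible with the image; restricting to $\CC{Z}$ (fixing their orientations all at once) is exactly what is needed to kill such interactions, and verifying that $\CC{Z}$ is nonempty, convex, and has the claimed hyperplane set is the technical heart of the argument.
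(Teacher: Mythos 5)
Your overall plan is the same as the paper's: identify the image sub-pocset, isolate a convex subcomplex $\CC{Z}$ whose extra hyperplanes are all transverse to the image, split $\CC{Z}$ as a product, define $F$ on vertices as a map of ultrafilters, and note that injectivity plus order-reflection give the $L^1$-isometry in the embedding case. The product-decomposition step is indeed the technical piece you identified; the paper handles it by partitioning $\Hyp{H}'\setminus f(\Hyp{H})$ into the hyperplanes transverse to every hyperplane of $f(\Hyp{H})$ (which become $\CC{Z}_2$) and the rest (whose preferred halfspaces from~\ref{AMcompactible_image} are intersected to form $\CC{Z}$).

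However there is a genuine gap in your definition of $F$. You define $F(U)$ as the direct pushforward $f_*(U) = \{\, f(\hs{h}) : \hs{h}\in U\,\}$ and assert, using only~\ref{AMcompinv}, that this is a choice function on the hyperplanes $f(\Hyp{H})$. This fails whenever $f$ is not injective on hyperplanes. Consider the resolution $f$ that folds the line $\R$ (vertices at integers, hyperplanes $\hyp{h}_n$ at $n+\tfrac{1}{2}$) onto a single edge, so every $\hyp{h}_n$ maps to the unique hyperplane $\hyp{h}'$. The ultrafilter $U$ of the vertex $0$ contains both the left halfspace of $\hyp{h}_0$ and the left halfspace of $\hyp{h}_1$; these are nested in $U$, and since $f$ sends facing inseparable pairs to the same halfspace (property~\ref{AMfacinghyps}), it must send these two nested halfspaces to \emph{complementary} halfspaces of $\hyp{h}'$. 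Hence $f_*(U)$ contains both $\hs{h}'$ and $\comp{\hs{h}'}$ and is not a choice function, so it does not define a vertex. Your subsequent consistency and DCC arguments all inherit this problem. The paper repairs this by defining $F(\CCv{x})(\hyp{h}')$ as $f$ applied not to an arbitrary preimage's orientation but to a \emph{minimal} one: $f(\CCv{x}(\hyp{h}))$ where $\hyp{h}\in f^{-1}(\hyp{h}')$ is chosen so that $\CCv{x}(\hyp{h})$ is minimal among $\{\CCv{x}(\hyp{k}):\hyp{k}\in f^{-1}(\hyp{h}')\}\subseteq\CCv{x}$; that this is independent of the choice of minimal element is exactly where~\ref{AMfacinghyps} (not~\ref{AMcompinv}) is needed, since two minimal choices are facing and inseparable within the class. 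Verifying that the resulting choice function is a consistent DCC ultrafilter then does use the resolution/embedding property, and the paper's argument by contradiction for consistency (splitting into the cases where one hyperplane separates the vertex from the other, or the two minimal halfspaces are facing) is the part you would need to redo. In the case where $f$ is an embedding each class $f^{-1}(\hyp{h}')$ is a singleton, so the pushforward and the minimal-choice definitions agree; this is why the last paragraph of your proposal is essentially sound even though your general definition of $F$ is not.
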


\begin{proof}
We partition the set $\Hyp{H}'$ into three subsets in the following way: let $\Hyp{H}_1=f(\Hyp{H})$; let  $\Hyp{H}_2$ be the set of all hyperplanes in $\Hyp{H}'\setminus f(\Hyp{H})$ that are transverse to all hyperplanes in $f(\Hyp{H})$; and let $\Hyp{H}_3$ be the remaining set, i.e $\Hyp{H}_3=\Hyp{H}'\setminus(\Hyp{H}_1\cup\Hyp{H}_2)$.

By property \ref{AMcompactible_image} and the definition, every hyperplane $\hyp{h}'\in\Hyp{H}_3$ has a unique choice of halfspace $\hs{h}'$ that contains or is transverse to any hyperplane in $\Hyp{H}_1$. By the definition of $\Hyp{H}_2$ the same choice of $\hs{h}'\in\Hs{H}_3$ will either contain or be transverse any hyperplane in $\Hyp{H}_2$.

Thus the subcomplex $\CC{Z}=\bigcap_{\hyp{h}'\in\Hs{H}_3} \hs{h}'$, where $\hs{h}'$ is the unique choice of halfspace that satisfies the above, is isomorphic to $\CC{X}(\Hs{H}_1\cup\Hs{H}_2)$, which naturally decomposes as a product $\CC{Z}=\CC{Z}_1\times\CC{Z}_2$ where $\CC{Z}_i=\CC{X}(\Hs{H}_i)$ for $i=1,2$.

We define the map $F:\CC{X}\to\CC{Z}_1$ in the following way. For a vertex $\CCv{x}$ of $\CC{X}$, which we think of as the ultrafilter choice function $\CCv{x}:\Hyp{H}\to\Hs{H}$, we define $F(\CCv{x})$ to be the following ultrafilter. For all $\hyp{h}'\in\Hyp{H}_1$, let $F(\CCv{x})(\hyp{h}')$ be $f(\CCv{x}(\hyp{h}))$ where $\hyp{h}$ is a hyperplane of $f^{-1}(\hyp{h}')$ such that $\CCv{x}(\hyp{h})$ is minimal in $\CCv{x}$. This is well defined by the axiom \ref{AMfacinghyps}.

The function $F(\CCv{x}):\Hyp{H}_1\to\Hs{H}_1$ is an ultrafilter. 
Let $\hyp{h}'$ and $\hyp{k}'$ be distinct hyperplanes, we have to show that $F(\CCv{x})(\hyp{h}')$ and $F(\CCv{x})(\hyp{k}')$ are compatible. Assume by contradiction that they are incompatible.
Then, their pre-images in $\Hs{H}/{\sim_f}$ under the embedding of pocsets $\Hs{H}/{\sim_f}\embedsin \Hs{H}'$ are incompatible.
Let $\hs{h},\hs{k}$ be the minimal halfspaces $\CCv{x}(\hyp{h}),\CCv{x}(\hyp{k})$ associated to hyperplanes $\hyp{h}$ and $\hyp{k}$ in $f^{-1}(\hyp{h'})$ and $f^{-1}(\hyp{k'})$ respectively, as described in the definition of $F(\CCv{x})$ above. 
Then they satisfy one of the following.
\begin{itemize}
\item The hyperplane $\hyp{k}$ separates $\CCv{x}$ and $\hyp{h}$, which in particular implies that the halfspace $\hs{h}$ contains $\hyp{k}$, and that $\hyp{h},\hyp{k}$ are \inseparable{$[\hyp{h}]$}. 
But since $[\hs{h}]<[\comp{\hs{k}}]$, by \Obs \ref{observation}, we get that $\hyp{k}\subset \comp{\hs{h}}$, a contradiction.
\item The hyperplane $\hyp{h}$ separates $\CCv{x}$ and $\hyp{k}$, which similarly gives a contradiction. 
\item The two halfspaces $\hs{h}$ and $\hs{k}$ are facing. We may assume that they are \inseparable{($[\hyp{h}]\cup[\hyp{k}]$)} (where the equivalence class is with respect to $\sim_f$), since otherwise we can find representative in $\CCv{x}$ satisfying the first bullet point. Thus, $\hs{h}$ and $\hs{k}$ must be incompatible, contradicting the fact that they both contain $\CCv{x}$.
\end{itemize}
Finally, the ultrafilter $F(\CCv{x})$ is DCC since $\CCv{x}$ is. 

This defines a map on vertices.  To show that this map extends to edges, it is enough to show that adjacent vertices are sent to adjacent vertices. Recall that two vertices are adjacent if their ultrafilters differ on exactly one hyperplane. Each of the two orientation of this hyperplane is minimal in the corresponding vertex. Hence their images have to differ exactly on this hyperplane by the construction of the map $F$. Similarly, the map extends to higher dimensional cubes because any pairwise transverse set of distinct hyperplanes projects injectively to a pairwise transverse set of distinct hyperplanes, by property \ref{AMcrossinghyps}.


If moreover $f$ is an embedding of pocsets, using \ref{AMcompactible_image} the collection of hyperplanes that separate $\CCv{x}$ and $\CCv{y}$ is in one-to-one correspondence with the collection of hyperplanes that separate $F(\CCv{x}')$ and $F(\CCv{y}')$. Thus, $F$ is an $L_1$-embedding.
\end{proof}

\section{Stallings' folds}

We say that a group acts \emph{without inversion} on a \CCC if there are no elements that send a halfspace to its complement. 
Given a group acting on a \CCC, by replacing the \CCC by its cubical barycentric subdivision we may assume that the action is without inversions.
Therefore, in what follows we consider only group actions without inversions.

The main goal of constructing Stallings' folds is to prove that, under some conditions, a resolution can be decomposed as a finite sequence of simpler quotients, called \emph{elementary folds}, which we introduce in the following definition.

\begin{definition}

Let a group $G$ act on the pocsets $\Hs{H},\Hs{H}'$, and let $f:\Hs{H}\to\Hs{H}'$ be a $G$-equivariant resolution of pocsets. 
Two facing halfspaces $\hs{h}_1$ and $\hs{h}_2$ of $\Hs{H}$ are \emph{\elemfoldable} if $\hs{h}_1\sim_f \hs{h}_2$, that is $f(\hs{h}_1)=f(\hs{h}_2)$, and there are no facing pairs $\hs{k}_1\sim_f \hs{k}_2$ that satisfy $\hs{k}_i\le\hs{h}_i$ for $i=1,2$.  
Equivalently, the pair $\hyp{h}_1,\hyp{h}_2$ is \inseparable{$[\hyp{h}_1]_{\sim_f}$}, and there are no pairs of $\sim_f$-equivalent hyperplanes that both separate $\hyp{h}_1,\hyp{h}_2$.

An \emph{elementary fold} is a quotient of the form $\Hs{H}/\hspace{-.15cm}\sim$ where $\sim$ is the minimal $G$-invariant and $\comp{}$-invariant equivalence relation generated by identifying an \elemfoldable pair $\hs{h}_1\sim\hs{h}_2$. 
We will denote the quotient map of an elementary fold by $\phi:\Hs{H}\leadsto\Hs{H}/\hspace{-.15cm}\sim$.
\end{definition}

\begin{remark}\label{rmk: an elementary fold exists}
	If $f:\Hs{H}\to\Hs{H}'$ is a resolution that is not an embedding, then $f$ admits two hyperplanes that are identified by $f$. Between them there are only finitely many other pairs that are identified by $f$ thus a minimal such pair is an elementary foldable pair.
\end{remark}

It is worth noting at this point that classical Stallings' folds for $G$-trees are indeed elementary folds also in our setting.

As we said, the goal is to show that certain resolutions can be factored by a finite sequence of elementary folds; this is the content of Lemma \ref{one factorization} and Proposition \ref{finite_folding}.
Before diving into more technical lemmas, we demonstrate the basic principles of these lemmas in the following example. 

\begin{example}\label{example of folding sequence}
Let us consider the action of $\Z$ on the line by translations by multiples of $2$, and the action of $\Z$ on the cubical barycentric subdivision of a square by rotations by multiples of $\pi/2$ (See top and bottom figures in Figure \ref{fan}) 

As pocsets, the former is $\Hs{H}=\{\hs{h}_i,\comp{\hs{h}}_i | i\in\Z\}$ with the poset structure $\hs{h}_i\ge\hs{h}_j$ and $\comp{\hs{h}}_i\le \comp{\hs{h}}_j$ for all $i\le j$, and the obvious complementation involution. 
The latter is $\Hs{H}=\{\hs{k}_i,\comp{\hs{k}}_i | i\in\Z/4\Z \}$ with the poset structure $\comp{\hs{k}}_i\le\hs{k}_{i+2}$ for all $i\in\Z/4\Z$.
The action of $\Z=<a>$ on $\Hs{H}$ is given by $ a \hs{h}_i = \hs{h}_{i+2}$ (and $a \comp{\hs{h}}_i = \comp{\hs{h}}_{i+2}$), and the action of $\Z$ on $\Hs{H}'$ is given by $ a \hs{k}_i = \hs{k}_{i+1}$ (and $a \comp{\hs{k}}_i = \comp{\hs{k}}_{i+1}$).
We consider the $\Z$-equivariant map $f:\Hs{H}\to\Hs{H}'$ that is defined  on $\hs{h}_i$ by 
$$
f(\hs{h}_i)=
\begin{cases}
	\hs{k}_{i/2\;(\rm{mod}\;4)} & i\equiv 0\;(\rm{mod}\; 2)\\ 
	\comp{\hs{k}}_{(i+3)/2\;(\rm{mod}\;4)} & i\equiv1\;(\rm{mod}\; 2)\\ 
\end{cases}
$$
(and on $\comp{\hs{h}}_i$  by $f(\comp{\hs{h}}_i) = \comp{f(\hs{h}_i)}$)

The $\Z$-invariant equivalence relation generated by $\hs{h}_{-1}\sim\comp{\hs{h}}_2 $ is an elementary fold.
After folding, we obtain a fan shaped square complex, in which $\Z$-many squares share a common vertex and two consecutive squares share an edge, on which $\Z$ acts by fixing the shared vertex and shifting the squares (see the middle figure in Figure \ref{fan}). As a pocset, it is isomorphic to $\Hs{H}_1=\{\hs{t}_i,\comp{\hs{t}}_i | i\in \Z\}$, with the poset $\comp{\hs{t}}_i\le\hs{t}_{j}$ for all $|i-j|\ge 2$, the $\Z$ action is given by $a \hs{t}_i = \hs{t}_{i+1}$ and the folding map $\phi_0$ is given by 

$$
\phi_0(\hs{h}_i)=
\begin{cases}
\hs{t}_{i/2} & i\equiv 0\;(\rm{mod}\; 2)\\ 
\comp{\hs{t}}_{(i+3)/2} & i\equiv1\;(\rm{mod}\; 2)\\ 
\end{cases}
$$

The map $f$ induces a map $f_1:\Hs{H}_1\to\Hs{H}'$ which can be written explicitly by $f_1 (\hs{t}_i) = \hs{k}_{i\;(\rm{mod}\;4)}$. 
This map is again a resolution, and the $\Z$-equivariant equivalence relation generated by $\hs{t}_0\sim \hs{t}_4$ is an elementary fold. The resulting quotient $\Hs{H}_2$ is isomorphic to the pocset $\Hs{H}'$, and under this isomorphism the quotient map $\phi_1:\Hs{H}_1\to\Hs{H}_2=\Hs{H}'$ is the map $f_1$. 
Thus, our sequence of folds $$\Hs{H}=\Hs{H}_0 \overset{\phi_0}{\leadsto} \Hs{H}_1 \overset{\phi_1}{\leadsto} \Hs{H}_2 \embedsin \Hs{H}'$$ terminated with the pocset $\Hs{H}_2$ which embeds in  $\Hs{H}'$ (in this case, they are isomorphic).

\begin{figure}[t]
\begin{center}
\def\svgwidth{0.7\textwidth}
\input{./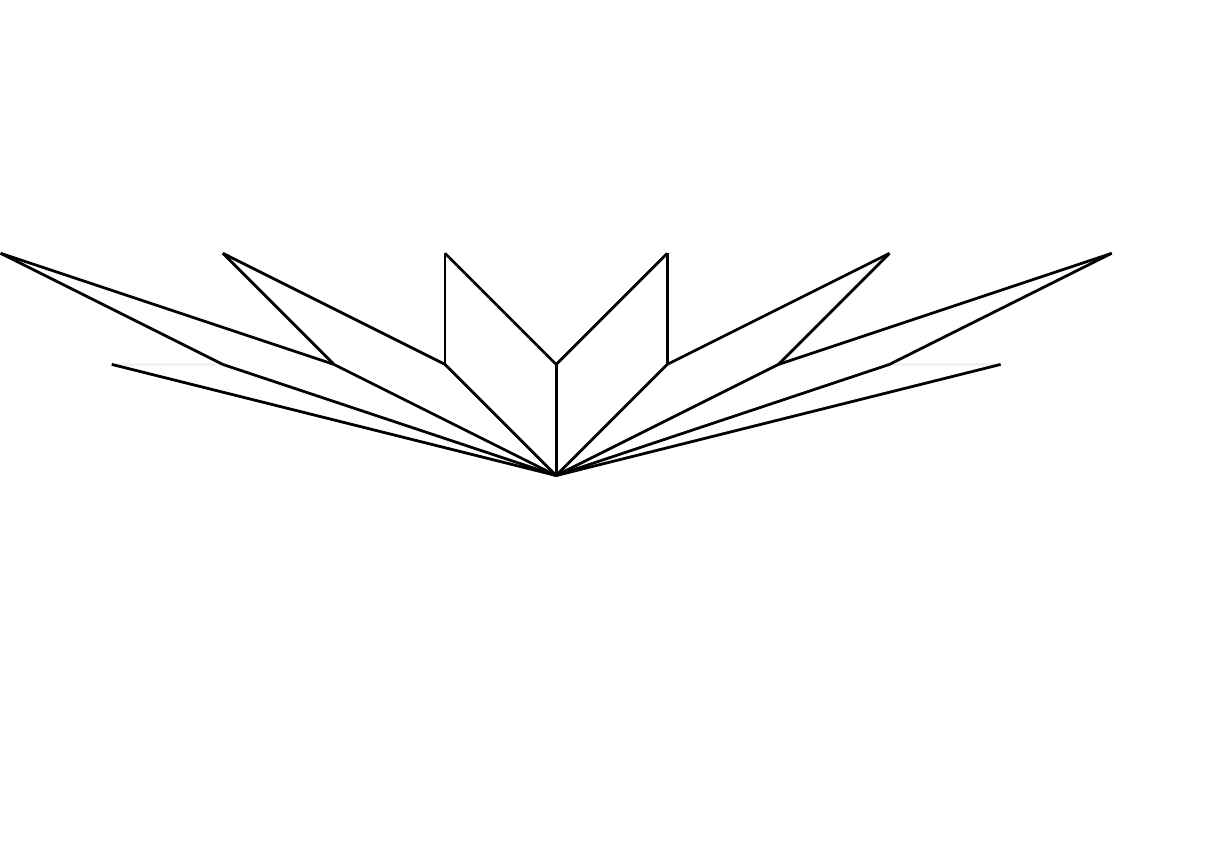_tex}
\caption{A folding sequence for an action of $\Z$.}
\label{fan}
\end{center}
\end{figure} 
\end{example}

Our main immediate goal is Lemma \ref{one factorization}, which states that a $G$-equivariant resolution can be factored through an elementary fold.
However, we will first need some technical lemmas that describe what the equivalence relation of an elementary fold looks like, and how to relate it to the arrangement of the hyperplanes in its quotient.


The following lemma shows that all pairs of hyperplanes in an equivalence class of an elementary fold are facing.
In Example \ref{example of folding sequence}, each of the classes of the first fold consists of two hyperplanes, and the classes of the second fold are infinite facing collections hyperplanes. 

\begin{lemma}\label{orientation}
Let $f:\Hs{H}\to\Hs{H}'$ be a $\gp{G}$-equivariant resolution of pocsets, and let $\hs{h}_1,\hs{h}_2$ be an elementary foldable pair of halfspaces. Let $\sim$ be the relation defining the elementary fold of $\hs{h}_1\sim\hs{h}_2$. Then for any hyperplane $\hyp{h}\in\Hyp{H}$, there exists an orientation $[\hs{h}]$ of $[\hyp{h}]$ such that the halfspaces in $[\hs{h}]$ are facing. In particular, the collection $[\hyp{h}]$ is facing.
\end{lemma}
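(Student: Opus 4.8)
The relation $\sim$ of an elementary fold is, by definition, the minimal $G$- and $\comp{}$-invariant equivalence relation generated by a single facing identification $\hs{h}_1\sim\hs{h}_2$. So every pair of $\sim$-equivalent halfspaces arises from a finite chain of ``moves'': applying a group element $g\in G$, applying complementation $\comp{}$, or using transitivity through an intermediate element. The core of the proof is to show that one can consistently orient every hyperplane in a class $[\hyp{h}]$ so that the chosen halfspaces are pairwise facing, and that this choice is forced. First I would fix notation: the generating pair $\{\hs{h}_1,\hs{h}_2\}$ is elementary foldable, meaning $\hyp{h}_1,\hyp{h}_2$ are \inseparable{$[\hyp{h}_1]_{\sim_f}$} and no pair of $\sim_f$-equivalent hyperplanes separates $\hyp h_1$ from $\hyp h_2$. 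The key structural input is that for the elementary fold, the whole class $[\hyp{h}_1]$ consists of $G$-translates of $\hyp h_1$ and $\hyp h_2$ (and these only), glued transitively; one should first verify that two hyperplanes in the same $\sim$-class are never transverse and never separated by a third member of the class, using \ref{selfcross}, Remark~\ref{existence of inseparable}, and the elementary-foldability hypothesis — this is essentially the claim that $[\hyp h_1]$ is a facing collection, once the orientations are set.

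\textbf{Setting up the orientation.} The natural strategy is: pick any $\hyp{k}\in[\hyp{h}]$, pick an arbitrary orientation $\hs{k}$, and declare, for every other $\hyp{k}'\in[\hyp{h}]$, the orientation $\hs{k}'$ to be the one facing $\hs{k}$ — i.e.\ the one with $\hs{k}' > \comp{\hs{k}}$ whenever $\hyp k'\neq\hyp k$. For this to be well-defined we need (a) that $\hyp k$ and $\hyp k'$ are disjoint (not transverse), which follows from \ref{selfcross} together with \Obs~\ref{preimage of crossing hyps} — if $[\hs h]$ became transverse to itself one of the three bad cases of that lemma would occur, but for an elementary fold we can rule those out because the generating pair is facing and inseparable; and (b) transitivity of ``facing'': if $\hs{k}_1$ faces $\hs{k}_2$ and $\hs k_1$ faces $\hs k_3$, with all three in the class, then $\hs k_2$ faces $\hs k_3$. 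This last point is exactly where one uses that \emph{no pair of $\sim$-equivalent hyperplanes separates the generating pair}, propagated $G$-equivariantly: if $\hyp{k}_2$ and $\hyp{k}_3$ were nested or one separated from the other by something, then pulling back along the chain of $G$-moves and complementations that produced the class $[\hyp h]$ would exhibit a pair of $\sim$-equivalent hyperplanes separating $\hyp h_1, \hyp h_2$, contradicting elementary-foldability. I would prove transitivity by induction on the length of the word in $G$ and $\comp{}$ witnessing $\sim$-equivalence, using at each step that the generating relation $\hs h_1\sim\hs h_2$ is facing, that $G$ acts without inversions (so it preserves the facing/nested/transverse trichotomy and orientations are tracked coherently), and that complementation of a facing pair $\{\hs h_1,\hs h_2\}$ gives the facing pair $\{\comp{\hs h_1},\comp{\hs h_2\}}$ of opposite orientation, handled by \ref{compinv}.

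\textbf{Main obstacle.} The delicate part is precisely the transitivity/consistency step: showing the ``face $\hs k$'' rule is independent of the base hyperplane $\hyp k$ chosen, equivalently that the class does not ``wrap around'' and force a hyperplane to face itself inconsistently. In the tree case this is automatic; here it can genuinely fail for a general admissible quotient (as the excerpt's remark after Lemma~\ref{preimage of crossing hyps} warns, classes of a general admissible relation need not be facing). What saves us is the minimality and the specific generation of an \emph{elementary} fold: I would argue that any ``inconsistency'' — a closed chain $\hyp{k}=\hyp k^{(0)},\hyp k^{(1)},\dots,\hyp k^{(m)}=\hyp k$ of consecutive facing pairs that returns to $\hyp k$ with the opposite orientation, or a pair within the class that is nested — produces, after transporting by the generating group elements, two $\sim$-equivalent hyperplanes separating $\hyp h_1$ and $\hyp h_2$, or two transverse $\sim$-equivalent hyperplanes; both are forbidden (the former by the definition of \elemfoldable, the latter by \ref{selfcross} via Lemma~\ref{preimage of crossing hyps}). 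Carrying this transport argument out carefully — keeping track of how $G$-translation and $\comp{}$ move the generating pair and using \Obs~\ref{observation} and \Obs~\ref{unfolding2} to locate separating hyperplanes — is the technical heart of the proof; once it is done, the stated orientation exists and the ``In particular'' clause (that $[\hyp h]$ is facing) is immediate from the construction.
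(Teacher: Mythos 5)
Your broad strategy --- exhibiting $[\hyp{h}]$ as a chain of $G$-translates of the generating pair $(\hyp{h}_1,\hyp{h}_2)$ and using elementary-foldability to forbid separation and transversality --- is the same as the paper's. But there are two genuine problems.

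First, a circularity. You justify disjointness of two hyperplanes in $[\hyp{h}]$ by \ref{selfcross} and \Obs~\ref{preimage of crossing hyps}. But \ref{selfcross} is a property of an \emph{admissible} equivalence relation, and the fact that the elementary fold $\sim$ is admissible is Lemma~\ref{lem: folds are AER}, whose proof \emph{uses} the present lemma; so you cannot assume it here. (Lemma~\ref{preimage of crossing hyps} is about transversality in a quotient pocset, which is not what you need either.) The correct tool is property \ref{AMcrossinghyps} of the resolution $f$: all members of $[\hyp{h}]$ have the same $f$-image, so they cannot be transverse in $\Hs{H}$. This is exactly what the paper invokes.

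Second, your orientation rule (``pick a base $\hs{k}$, orient everything else to face it'') puts the burden on a transitivity/consistency step --- you correctly identify this as the heart of the matter, but you do not discharge it; the ``pull back along the chain'' argument is left as an intention rather than a proof, and as written it is not clear why nesting or a wrap-around orientation conflict would produce precisely the forbidden configuration (a $\sim_f$-equivalent pair both separating some $G$-translate of $\hyp{h}_1,\hyp{h}_2$). The paper avoids this entirely by a different orientation setup: it extends $\hs{h}_1,\hs{h}_2$ $G$-equivariantly over the orbits $G.\hyp{h}_1$ and $G.\hyp{h}_2$ (well-defined because the action is without inversion), so every $\hyp{l}_i$ in the chain gets an orientation at once, and consecutive pairs are automatically facing in these orientations. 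It then observes that no $\hyp{l}_i$ separates any consecutive pair $(\hyp{l}_j,\hyp{l}_{j+1})$ (each such pair is a $G$-translate of the elementary foldable pair, hence \inseparable{$[\hyp{h}_1]_{\sim_f}$}, and every $\hyp{l}_i$ lies in $[\hyp{h}_1]_{\sim_f}$), and that no two $\hyp{l}_i,\hyp{l}_j$ are transverse. A short induction along the chain then gives $\hyp{l}_j\subset\hs{l}_i$ for all $i\ne j$ directly; there is no separate base-point-independence to check. You should reorganize the argument along these lines rather than trying to repair the facing-transitivity claim, which as stated (``$\hs{k}_1$ faces $\hs{k}_2$ and $\hs{k}_1$ faces $\hs{k}_3$ implies $\hs{k}_2$ faces $\hs{k}_3$'') is false without the no-separation input that you would in any case have to establish.
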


\begin{proof}
Let $\hs{h}_1,\hs{h}_2$ be the orientation of $\hyp{h}_1,\hyp{h}_2$. 
By definition $\hs{h}_1,\hs{h}_2$ are facing.
Extend the orientation using the action of $G$ to the orbits $G.\hyp{h}_1$ and $G.\hyp{h}_2$, this is well defined since $G$ acts without inversions.
For any two equivalent hyperplanes $\hyp{t}_1\sim\hyp{t}_2$ there exists a sequence of distinct hyperplanes $\hyp{t}_1=\hyp{l}_1, \hyp{l}_2, \dots, \hyp{l}_n = \hyp{t}_2$, such that for all $i$ there exists $g_i\in \gp{G}$  such that $(\hyp{l}_i, \hyp{l}_{i+1}) = (g_i\cdot\hyp{h}_1,g_i\cdot\hyp{h}_2)$ or $(g_i\cdot\hyp{h}_2,g_i\cdot\hyp{h}_1)$.  Since the set of \elemfoldable pairs is stable by the action, all the pairs are \elemfoldable and none of the hyperplanes $\hyp{l}_i$ separate any of the pairs $(\hyp{l}_j,\hyp{l}_{j+1})$. In addition, by property \ref{AMcrossinghyps}, $\hyp{l}_i$ is not transverse to $\hyp{l}_j$ for all $i,j$. This implies that for all $i\ne j$, $\hyp{l}_j \subset \hs{l}_i$, which proves the lemma.
\end{proof}

%

\begin{lemma}\label{lem: folds are AER}
Let $f_0:\Hs{H}\to\Hs{H}'$ be a $G$-equivariant resolution. Let $\sim$ be an elementary fold as above. Then $\sim$ is an admissible equivalence relation.
\end{lemma}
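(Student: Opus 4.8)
The plan is to verify the four axioms \ref{selfcomp}--\ref{welloriented} of an admissible equivalence relation directly for the relation $\sim$ defining the elementary fold of the \elemfoldable pair $\hs{h}_1\sim\hs{h}_2$, using \Obs \ref{orientation} as the main input. Recall that $\sim$ is the minimal $G$-invariant, $\comp{}$-invariant equivalence relation generated by $\hs{h}_1\sim\hs{h}_2$; concretely, two halfspaces are $\sim$-equivalent if and only if there is a finite chain connecting them through halfspaces of the form $g\cdot\hs{h}_1$, $g\cdot\hs{h}_2$ (and their complements), exactly as in the proof of \Obs \ref{orientation}. Throughout I will freely use that $\sim$ refines $\sim_f$ (since $f(\hs{h}_1)=f(\hs{h}_2)$ and $f$ is $G$-equivariant), so any property already known for $\sim_f$-classes can be pulled back, and that \Obs \ref{orientation} gives, for every hyperplane $\hyp{h}$, a coherent facing orientation of the class $[\hyp{h}]_{\sim}$.

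First I would dispatch \ref{compinv}: this is immediate, because $\sim$ is $\comp{}$-invariant by construction. Next, \ref{selfcross}: if $\hs{h}\pitchfork\hs{k}$ then by property \ref{AMcrossinghyps} of the resolution $f$ we have $f(\hyp{h})\pitchfork f(\hyp{k})$, so in particular $f(\hs{h})\ne f(\hs{k})$, hence $\hs{h}\nsim_f\hs{k}$ and a fortiori $\hs{h}\nsim\hs{k}$. For \ref{selfcomp}, suppose towards a contradiction that $\hs{h}\sim\comp{\hs{h}}$ for some $\hs{h}$. Then $\hyp{h}=\comp{\hyp{h}}$ as hyperplanes, so the equivalence class $[\hyp{h}]_{\sim}$ contains the hyperplane $\hyp{h}$ but with its two orientations $\sim$-equivalent to each other; this contradicts the conclusion of \Obs \ref{orientation}, which assigns the single coherent orientation $[\hs{h}]$ to $[\hyp{h}]$ in which all pairs are facing (a halfspace and its complement are never facing). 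Alternatively one notes $\hs{h}\sim\comp{\hs{h}}$ forces $f(\hs{h})=f(\comp{\hs{h}})=\comp{f(\hs{h})}$ by \ref{AMcompinv}, impossible in a pocset — this is the cleaner argument and avoids invoking \Obs \ref{orientation}.

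The substantive axiom is \ref{welloriented}: given distinct hyperplanes $\hyp{h}\sim\hyp{k}$ that are \inseparable{$[\hyp{h}]_{\sim}$}, I must show that the orientations $\hs{h},\hs{k}$ with $\hs{h},\hs{k}$ facing actually satisfy $\hs{h}\sim\hs{k}$. Here is where \Obs \ref{orientation} does the work: it provides an orientation of the whole class $[\hyp{h}]_{\sim}$ for which every pair of elements is facing, and in particular $\hyp{h}$ and $\hyp{k}$ receive facing orientations, say $\hs{h}_0$ and $\hs{k}_0$, with $\hs{h}_0\sim\hs{k}_0$ (the orientation from the lemma is, by its construction, $\sim$-coherent: the chain of $g_i$-translates identifies the halfspaces one picks). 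Now a facing pair of orientations of a fixed pair of disjoint hyperplanes is unique, so $\{\hs{h}_0,\hs{k}_0\}=\{\hs{h},\hs{k}\}$, whence $\hs{h}\sim\hs{k}$. The one point needing care — and the step I expect to be the main obstacle — is making rigorous that the orientation produced in \Obs \ref{orientation} is genuinely $\sim$-coherent class-wide, i.e. that one may choose, simultaneously for all hyperplanes in $[\hyp{h}]_{\sim}$, orientations that are pairwise $\sim$-equivalent; this requires re-examining the chain $\hyp{h}=\hyp{l}_1,\dots,\hyp{l}_n=\hyp{k}$ in that proof and checking that consecutive orientations $\hs{l}_i,\hs{l}_{i+1}$ are $\sim$-equivalent (they are, since $(\hyp{l}_i,\hyp{l}_{i+1})$ is a $G$-translate of the generating pair $(\hyp{h}_1,\hyp{h}_2)$, and the extended orientation was defined precisely to be $G$-equivariant on these orbits) and that the facing condition forces this chained orientation to agree with the one coming from the hypothesis that $\hyp{h},\hyp{k}$ are \inseparable{$[\hyp{h}]_{\sim}$}. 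Once this coherence is in hand, \ref{welloriented} follows, and with all four axioms verified, $\sim$ is an admissible equivalence relation, completing the proof.
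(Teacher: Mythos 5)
Your proof is correct and follows the same route as the paper's (very terse) proof: \ref{compinv} is by construction, \ref{selfcross} from admissibility of $f_0$, and \ref{selfcomp} and \ref{welloriented} from \Obs~\ref{orientation}. The extra detail you supply — in particular the observation that $\sim$ refines $\sim_f$ and the check that the facing orientation of \Obs~\ref{orientation} is $\sim$-coherent — is exactly the implicit content of the paper's one-line appeal to Lemma~\ref{orientation}.
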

\begin{proof}
By definition, the equivalence relation $\sim$ satisfies Property \ref{compinv}. Properties \ref{selfcomp} and \ref{welloriented} follows directly from Lemma \ref{orientation}.
Property \ref{selfcross} follows from the fact that the map $f_0$ is admissible, and thus if $\hyp{h}\pitchfork\hyp{k}$ then their images in $\Hs{H}'$ are distinct and in particular they are not $\sim$-equivalent.
\end{proof}

The following lemma characterizes when two hyperplanes are transverse after an elementary fold. 
It might be useful to compare the two cases of the lemma with Example \ref{example of folding sequence}. The transverse pairs of hyperplanes after the first fold correspond to Case \ref{case2}, while those of the second fold correspond to Case \ref{case1}.
%
%

\begin{observation}\label{unfolding1}
Let $f:\Hs{H}\to\Hs{H}'$ be a $G$-equivariant resolution of pocsets.
Let $\phi:\Hs{H}\leadsto\Hs{H}/\hspace{-.15cm}\sim $ be an elementary fold of $\Hs{H}$ defined by folding the \elemfoldable pair $\hs{h}_1\sim\hs{h}_2$. If $[\hyp{h}]\pitchfork[\hyp{k}]$ in $\Hyp{H}/\hspace{-.15cm}\sim $ then up to interchanging $[\hyp{h}]$ and $[\hyp{k}]$, there exists an orientation $[\hs{h}]$ and $[\hs{k}]$ of $[\hyp{h}]$ and $[\hyp{k}]$ such that one of the following happens: 
\begin{enumerate}
\item\label{case1} there exist $\hs{h}\in [\hs{h}]$ and $\hs{k}\in [\hs{k}]$ such that $\hs{h}\pitchfork\hs{k}$;
\item\label{case2}   there exist an element $g\in G$ and $\hs{k}\in [\hs{k}]$ such that $[g\cdot\hs{h}_1] = [g\cdot\hs{h}_2] = [\hs{h}]$ and $g\cdot \comp{\hs{h}}_1 < \hs{k} < g\cdot \hs{h}_2$ (up to interchanging $\hs{h}_1$ and $\hs{h}_2$).
\end{enumerate}

Conversely, if there exists $\hyp{h}$, $\hyp{h}_\bullet$ and $\hyp{k}$ in $\Hyp{H}$ such that $\hyp{h} \sim \hyp{h}_\bullet$ and $\hyp{k}$ separates $\hyp{h}$ and $\hyp{h}_\bullet$ then $[\hyp{h}] \pitchfork [\hyp{k}]$ in $\Hyp{H}/\hspace{-.15cm}\sim $.
\end{observation}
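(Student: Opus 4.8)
The plan is to analyze how transversality in the quotient $\Hs{H}\modsim$ can arise, using Lemma \ref{preimage of crossing hyps} as the starting point, and then refine each of its three outcomes using the very restrictive structure of an elementary fold (Lemma \ref{orientation}, which says every equivalence class is a facing collection with a well-defined facing orientation). So first I would apply Lemma \ref{preimage of crossing hyps} to the transverse pair $[\hyp{h}]\pitchfork[\hyp{k}]$: either some $\hs{h}\in[\hs{h}]$ and $\hs{k}\in[\hs{k}]$ are transverse — this is exactly Case \ref{case1} — or, up to swapping the roles of $[\hyp{h}]$ and $[\hyp{k}]$, there are $\hyp{h}_1,\hyp{h}_2\in[\hyp{h}]$ and $\hyp{k}\in[\hyp{k}]$ with $\hyp{k}$ separating $\hyp{h}_1$ and $\hyp{h}_2$. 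In this second situation, the goal is to show we land in Case \ref{case2}.

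The key step is to reduce from a general separating configuration $\hyp{h}_1,\hyp{h}_2\in[\hyp{h}]$ with $\hyp{k}$ between them, to one coming from a \emph{single} application of the generating relation $\hs{h}_1\sim\hs{h}_2$. Here I would use the description of the equivalence class from the proof of Lemma \ref{orientation}: any two $\sim$-equivalent hyperplanes $\hyp{h}_1,\hyp{h}_2$ are joined by a chain $\hyp{h}_1=\hyp{l}_1,\hyp{l}_2,\dots,\hyp{l}_n=\hyp{h}_2$ of distinct hyperplanes where each consecutive pair $(\hyp{l}_i,\hyp{l}_{i+1})$ is a $G$-translate of $(\hyp{h}_1,\hyp{h}_2)$ or $(\hyp{h}_2,\hyp{h}_1)$, and moreover (by the \elemfoldable condition being $G$-stable) no pair of $\sim$-equivalent hyperplanes separates any such consecutive pair, and the whole chain is a facing collection. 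Since $\hyp{k}$ separates the endpoints $\hyp{l}_1$ and $\hyp{l}_n$ of this facing chain, and the $\hyp{l}_i$ are linearly ordered by nesting of the facing orientation, $\hyp{k}$ must separate some consecutive pair $\hyp{l}_i,\hyp{l}_{i+1}$. Writing $(\hyp{l}_i,\hyp{l}_{i+1})=(g\cdot\hyp{h}_1,g\cdot\hyp{h}_2)$ (up to swapping $\hyp{h}_1\leftrightarrow\hyp{h}_2$), and orienting so that the facing halfspaces are $g\cdot\hs{h}_1$ and $g\cdot\hs{h}_2$, the halfspace $\hs{k}$ of $\hyp{k}$ lying between them satisfies $g\cdot\comp{\hs{h}}_1<\hs{k}<g\cdot\hs{h}_2$; and $[g\cdot\hs{h}_1]=[g\cdot\hs{h}_2]$ by construction, so $[\hs{h}]$ is this common class. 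That is Case \ref{case2}. One routine point to check along the way is that $\hyp{k}\notin[\hyp{h}]$, which is immediate since $[\hyp{h}]\ne[\hyp{k}]$ as they are transverse (property \ref{selfcross}).

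For the converse, suppose $\hyp{h}\sim\inbullet{\hyp{h}}$ with $\hyp{k}$ separating them. Then in $\Hs{H}\modsim$ the class $[\hyp{k}]$ does not separate $[\hyp{h}]$ from itself, so it is not nested below $[\hs{h}]$ or $\comp{[\hs{h}]}$ for either orientation; yet $\hyp{k}$ being trapped strictly between two elements of the facing class $[\hyp{h}]$ means $[\hyp{h}]$ and $[\hyp{k}]$ cannot be facing either (a facing orientation of $[\hyp{h}]$ would put both $\hyp{h}$ and $\inbullet{\hyp{h}}$ on the same side of $\hyp{k}$, contradicting separation), and they are not nested nor disjoint in the quotient. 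By the trichotomy of compatible-or-incompatible and the definition of the quotient poset, the only remaining possibility is that $[\hyp{h}]$ and $[\hyp{k}]$ are transverse in $\Hyp{H}\modsim$. I expect the main obstacle to be the bookkeeping in the forward direction — carefully invoking the chain structure from the proof of Lemma \ref{orientation} and verifying that $\hyp{k}$ separates a \emph{consecutive} pair rather than merely the endpoints — since linearly ordering a facing collection and locating $\hyp{k}$ in it needs the local finiteness and the fact (also from that proof) that distinct $\hyp{l}_i,\hyp{l}_j$ are never transverse, so nesting of their facing orientations is a genuine total order.
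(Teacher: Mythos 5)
Your forward direction is essentially the paper's argument: apply Lemma~\ref{preimage of crossing hyps}, then use the chain from the proof of Lemma~\ref{orientation} to locate a consecutive $G$-translate pair $(g\hyp{h}_1,g\hyp{h}_2)$ that $\hyp{k}$ separates. That part is fine.

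The converse has a genuine gap. You argue ``not nested, not facing, hence transverse,'' but the ``not nested'' step is unsupported (``$[\hyp{k}]$ does not separate $[\hyp{h}]$ from itself'' is not a meaningful obstruction to nesting), and the ``not facing'' step does not actually follow from Lemma~\ref{observation} applied to the facing orientation of $[\hs{h}]$, since Lemma~\ref{observation} only gives $\hyp{k}\subset\hs{h}_0$ and $\hyp{k}\subset\hs{h}_\bullet$ under the inward-facing orientation --- which is perfectly consistent and yields no contradiction. Crucially, your argument never invokes the \elemfoldable condition, and the statement is \emph{false} without it: if two $\sim$-equivalent hyperplanes $\hyp{k}_1,\hyp{k}_2$ both separate $\hyp{h}$ and $\hyp{h}_\bullet$ (ordered $\hyp{h},\hyp{k}_1,\hyp{k}_2,\hyp{h}_\bullet$ with both classes facing), a direct check of the quotient poset shows $[\hs{h}]>[\hs{k}]$, so the classes are nested, not transverse. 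The paper's converse proof avoids this by first reducing (again via the chain) to the situation where $\hyp{k}$ separates a specific translate $g\hyp{h}_1,g\hyp{h}_2$ of the \elemfoldable pair, and then uses the \elemfoldable condition (no $\sim_f$-equivalent pair separates $g\hyp{h}_1$ from $g\hyp{h}_2$, and $[\hyp{h}]$ is facing) to conclude that both pairs $(\hyp{k},g\hyp{h}_1)$ and $(\hyp{k},g\hyp{h}_2)$ are \inseparable{($[\hyp{h}]\cup[\hyp{k}]$)}. Since these two inseparable pairs force contradictory orientations of $[\hs{k}]$ relative to $[\hs{h}]$, the classes cannot be comparable and must be transverse. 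You need this reduction-plus-inseparability step; the trichotomy argument alone does not close the proof.
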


\begin{proof}
Assume that $[\hs{h}]\pitchfork[\hs{k}]$ and Case \ref{case1} does not hold. From Lemma \ref{preimage of crossing hyps}, without loss of generality, assume that an element $\hyp{k}\in [\hyp{k}]$ separates $[\hyp{h}]$.
By Lemma \ref{orientation}, all the elements of $[\hs{h}]$ are facing, we can choose two elements $\hs{h},\hs{h}_\bullet$ such that $\comp{\hs{h}} < \hs{k} < \hs{h}_\bullet$.
Using the same construction as in the proof of Lemma \ref{orientation}, there exists a sequence of distinct hyperplanes $\hyp{h}=\hyp{l}_1, \hyp{l}_2, \dots, \hyp{l}_n = \hyp{h}_\bullet$, such that for all $i$ there exists $g_i\in \gp{G}$  such that $(\hyp{l}_i, \hyp{l}_{i+1}) = (g_i\cdot\hyp{h}_1,g_i\cdot\hyp{h}_2)$ or $(g_i\cdot\hyp{h}_2,g_i\cdot\hyp{h}_1)$.
 Since no element of $[\hs{k}]$ is transverse to an element of $[\hs{h}]$, there exists $i$ such that $g_i\cdot\comp{\hs{h}}_1 < \hs{k} < g_i\cdot\hs{h}_2$ (up to interchanging $\hs{h}_1$ and $\hs{h}_2$). 
 
 For the last part of the lemma, let us first remark that from Lemma \ref{orientation} we know that $\hyp{h} \not \sim \hyp{k}$. If $\hyp{k}$ is transverse to an element of $[\hyp{h}]$ then by definition $[\hyp{h}]\pitchfork[\hyp{k}]$. So we may assume that $\hyp{k}$ is disjoint from the elements of $[\hyp{h}]$. Using the same construction as previously, there exists $g \in G$ such that $\hyp{h}\sim g\cdot\hyp{h}_1$ and $\hyp{k}$ separates $g\cdot \hyp{h}_1$ and $g\cdot \hyp{h}_2$.
Since $\hyp{h}_1$ and $\hyp{h}_2$ is an \elemfoldable pair, the hyperplane $\hyp k$ cannot be equivalent to any other hyperplane separating  $g\cdot \hyp{h}_1$ and $g\cdot \hyp{h}_2$, that is, both the pairs $\hyp k$, $g\cdot \hyp{h}_1$ and $\hyp{k}$, $g\cdot \hyp{h}_2$ are \inseparable{$[\hyp{h}] \cup [\hyp{k}]$}, which by the definition of the pocset structure of the quotient implies that $[\hyp{h}] \pitchfork [\hyp{k}]$.
 \end{proof}



\begin{observation}\label{no more crossing}
Let $f:\Hs{H} \rightarrow \Hs{H}'$ be a $G$-equivariant resolution, and $\phi$ be an elementary fold of $f$. Let $\hs{h}$ and $\hs{k}$ be two halfspaces of $\Hs{H}$. If $\phi(\hs{h})$ and $\phi(\hs{k})$ are transverse, then so are $[\hs{h}]_{\sim_f}$ and $[\hs{k}]_{\sim_f}$ in $\Hs{H}/\hspace{-.15cm}\sim_f$. In particular their images $f(\hs{h})$ and $f(\hs{k})$ are transverse in $\Hs{H}'$.
\end{observation}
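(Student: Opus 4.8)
\textbf{Proof strategy for Lemma \ref{no more crossing}.}

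The plan is to transfer the hypothesis along the chain of quotient maps $\Hs{H} \to \Hs{H}/\hspace{-.15cm}\sim_f \to \Hs{H}'$, using the fact that an elementary fold $\sim$ is a \emph{coarsening} of $\sim_f$ (since $f$ is a resolution, the elementary foldable pair $\hs{h}_1 \sim_f \hs{h}_2$ is already identified by $f$, hence the relation $\sim$ generated by it is contained in $\sim_f$). This gives a factorization $\Hs{H} \overset{\phi}{\leadsto} \Hs{H}/\hspace{-.15cm}\sim \to \Hs{H}/\hspace{-.15cm}\sim_f \embedsin \Hs{H}'$. So the induced equivalence class $[\hs{h}]_{\sim_f}$ is a union of $\sim$-classes, and it suffices to show: if $[\hs{h}]_{\sim}$ and $[\hs{k}]_{\sim}$ are transverse in $\Hs{H}/\hspace{-.15cm}\sim$, then $[\hs{h}]_{\sim_f}$ and $[\hs{k}]_{\sim_f}$ are transverse in $\Hs{H}/\hspace{-.15cm}\sim_f$; the ``in particular'' clause then follows because $\Hs{H}/\hspace{-.15cm}\sim_f \embedsin \Hs{H}'$ is an embedding of pocsets, and embeddings preserve the arrangement type of hyperplanes (in particular transversality, by Lemma \ref{AMtoCCCmap} or directly from the definition of embedding of pocsets together with property \ref{AMcrossinghyps}).

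First I would unpack transversality of $[\hs{h}]_\sim, [\hs{k}]_\sim$ using \Obs \ref{unfolding1}. Up to interchanging the two classes and choosing orientations, we are in one of two cases. In Case \ref{case1} there are representatives $\hs{h} \in [\hs{h}]_\sim$, $\hs{k} \in [\hs{k}]_\sim$ with $\hs{h} \pitchfork \hs{k}$ in $\Hs{H}$; then $\hyp{h} \pitchfork \hyp{k}$, and since $\hyp{h} \in [\hyp{h}]_{\sim_f}$ and $\hyp{k} \in [\hyp{k}]_{\sim_f}$ are transverse representatives, the first bullet of Lemma \ref{preimage of crossing hyps}'s situation applies directly and $[\hs{h}]_{\sim_f} \pitchfork [\hs{k}]_{\sim_f}$ by the definition of the pocset structure on the quotient (transverse hyperplanes cannot be $\sim_f$-nested or facing in the quotient). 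In Case \ref{case2} there is $g \in G$ and $\hs{k} \in [\hs{k}]_\sim$ with $g \cdot \comp{\hs{h}}_1 < \hs{k} < g\cdot \hs{h}_2$ and $[g\cdot\hs{h}_1]_\sim = [g\cdot\hs{h}_2]_\sim = [\hs{h}]_\sim$. The point is that $g\cdot\hyp{h}_1$ and $g\cdot\hyp{h}_2$ lie in the same $\sim_f$-class (again because $\sim \subseteq \sim_f$), so $\hyp{k}$ is a hyperplane separating two $\sim_f$-equivalent hyperplanes, both lying in $[\hyp{h}]_{\sim_f}$; by the converse direction of \Obs \ref{unfolding1}, applied to the elementary fold or more simply by invoking the analogous last clause at the level of $\sim_f$ (which holds since $\Hs{H}/\hspace{-.15cm}\sim_f \embedsin \Hs{H}'$ forces exactly this separation-to-transversality implication, cf. the argument in the proof of Lemma \ref{AMtoCCCmap}), we conclude $[\hyp{h}]_{\sim_f} \pitchfork [\hyp{k}]_{\sim_f}$.

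I expect the main obstacle to be the bookkeeping in Case \ref{case2}: one must make sure that the separation $g\cdot\comp{\hs{h}}_1 < \hs{k} < g\cdot\hs{h}_2$ does not get ``undone'' after passing to $\sim_f$, i.e. that $\hyp{k}$ is genuinely $\sim_f$-inseparable from both $g\cdot\hyp{h}_1$ and $g\cdot\hyp{h}_2$, so that the quotient pocset relation actually records a transversality rather than, say, a nesting. This is where one uses that $\hyp{h}_1, \hyp{h}_2$ is an elementary foldable pair: no pair of $\sim_f$-equivalent hyperplanes separates $g\cdot\hyp{h}_1$ from $g\cdot\hyp{h}_2$, hence in particular $\hyp{k}$ is not $\sim_f$-equivalent to anything separating them, which is exactly the inseparability needed to invoke the definition of the pocset structure on $\Hs{H}/\hspace{-.15cm}\sim_f$ and deduce transversality. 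The remaining details — checking that $\hs{h} \pitchfork \hs{k}$ in Case \ref{case1} is preserved, and that the final passage $\Hs{H}/\hspace{-.15cm}\sim_f \embedsin \Hs{H}'$ carries transversality to transversality — are routine from the definitions of admissible map and embedding of pocsets.
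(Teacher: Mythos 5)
Your overall strategy matches the paper's: apply \Obs \ref{unfolding1}, split into the two cases, and pass to $\Hs{H}'$ via the embedding $\Hs{H}/\hspace{-.15cm}\sim_f\embedsin\Hs{H}'$ at the end. Case \ref{case1} is handled correctly (the paper jumps directly to $\Hs{H}'$ using \ref{AMcrossinghyps}, you go via the quotient; both are fine). However, your ``more simply'' alternative in Case \ref{case2} is false: it is \emph{not} true that for a general resolution $f$, a hyperplane $\hyp{k}$ separating two $\sim_f$-equivalent hyperplanes forces $[\hyp{h}]_{\sim_f}\pitchfork[\hyp{k}]_{\sim_f}$, and the embedding $\Hs{H}/\hspace{-.15cm}\sim_f\embedsin\Hs{H}'$ does not supply any such implication. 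For a counterexample, take $\Hs{H}$ to be the pocset of a line segment with four hyperplanes $\hyp{a},\hyp{b},\hyp{c},\hyp{d}$ in this order, and $\sim_f$ the admissible equivalence identifying $\hyp{a}\sim_f\hyp{d}$ and $\hyp{b}\sim_f\hyp{c}$ (with facing orientations paired per \ref{welloriented}); this is the $\sim_f$ of the quotient map to $\Hs{H}':=\Hs{H}/\hspace{-.15cm}\sim_f$, hence a resolution. Then $\hyp{b}$ separates the $\sim_f$-equivalent pair $\hyp{a},\hyp{d}$, yet in $\Hs{H}/\hspace{-.15cm}\sim_f$ the classes $[\hyp{a}]$ and $[\hyp{b}]$ are \emph{nested}: the only \inseparable{$([\hyp{a}]\cup[\hyp{b}])$} pairs are $(\hyp{a},\hyp{b})$ and $(\hyp{d},\hyp{c})$, and both give the same nesting. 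The elementary foldability is exactly what is missing here: $\hyp{a},\hyp{d}$ is not an \elemfoldable pair because $\hyp{b},\hyp{c}$ is a $\sim_f$-equivalent pair separating them. The ``converse'' last clause of \Obs \ref{unfolding1} is stated only for an elementary fold $\sim$, and its proof explicitly invokes elementary foldability, so it cannot be ``invoked at the level of $\sim_f$.''

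Fortunately, your final paragraph recovers the correct argument, which is precisely the paper's: since $\hyp{h}_1,\hyp{h}_2$ is an \elemfoldable pair, no element of $[\hyp{h}]_{\sim_f}$ separates $g\hyp{h}_1$ and $g\hyp{h}_2$, and $\hyp{k}$ is the \emph{unique} element of $[\hyp{k}]_{\sim_f}$ separating them; hence both $(\hyp{k},g\hyp{h}_1)$ and $(\hyp{k},g\hyp{h}_2)$ are \inseparable{$([\hyp{h}]_{\sim_f}\cup[\hyp{k}]_{\sim_f})$}, and the two inseparable pairs impose contradictory nesting on $\hs{k}$ (facing with $g\hs{h}_1$, nested with $g\hs{h}_2$), which is the definition of transversality in $\Hs{H}/\hspace{-.15cm}\sim_f$. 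That is the proof; the shortcut should be deleted, as it is not just a bookkeeping gap but an actually false statement whose failure is the reason the elementary fold hypothesis is indispensable.
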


\begin{proof}
 Let $\hs{h}$ and $\hs{k}$ be two halfspaces of $\Hs{H}$ and assume $\phi(\hs{h})$ and $\phi(\hs{k})$ are transverse.
By \Obs \ref{unfolding1}, either some preimages $\hs{h}_\bullet$ and $\hs{k}_\bullet$ are transverse in $\Hs{H}$, in which case, by Property \ref{AMcrossinghyps} their images are transverse in $\Hs{H}'$. 
Or, up to swapping $\hs{h}$ and $\hs{k}$ there exists $\hs{k}_\bullet\sim \hs{k}$ and $g\in G$ such that $g\cdot\comp{\hs{h}}_1 < \hs{k}_\bullet < g\cdot \hs{h}_2$ and $g\cdot\hs{h}_1\sim g\cdot \hs{h}_2 \sim \hs{h}$ where $\phi$ is the elementary fold that is generated by the elementary foldable pair $\hs{h}_1\sim\hs{h}_2$.
Since $\hs{h}_1$ and $\hs{h}_2$ are an \elemfoldable pair, the only preimage of $f(\hyp{k})$ separating  $g\cdot\hyp{h}_1$ and $g\cdot \hyp{h}_2$ is $\hs{k}_\bullet$.
Therefore, by the definition of the order, the images of $\hs{h}$ and $\hs{k}$ in $\Hs{H}/\hspace{-.15cm}\sim_f$ are transverse, and thus $f(\hyp{k})$ and $f(\hyp{h})$ are also transverse.
\end{proof}

In the setting of trees, if two edges $\hyp{h}$ and $\hyp{l}$ are separated by a third edge $\hyp{k}$, and after a fold $\phi$ the edge $\phi(\hyp{k})$ does not separate $\phi(\hyp{h})$ and $\phi(\hyp{l})$, then either $\phi$ folds $\hyp{k}$ to one of $\hyp{h},\hyp{l}$, or $\phi$ folds $\hyp{k}$ to another edge $\hyp{k}'$ that also separates $\hyp{h}$ and $\hyp{l}$, and the triple $\phi(\hyp{h})$,  $\phi(\hyp{l})$ and  $\phi(\hyp{k})$ is a facing triple. 
See Figure \ref{fig: reunited at last}.

The following lemma describes a similar behavior of CAT(0) cube complex folds.
Again, it is worth comparing also to Example \ref{example of folding sequence}, where after the first fold there is no pair of hyperplanes which is separated by a third.


\begin{figure}[t]
	\begin{center}
		\def\svgwidth{0.7\textwidth}
		\input{./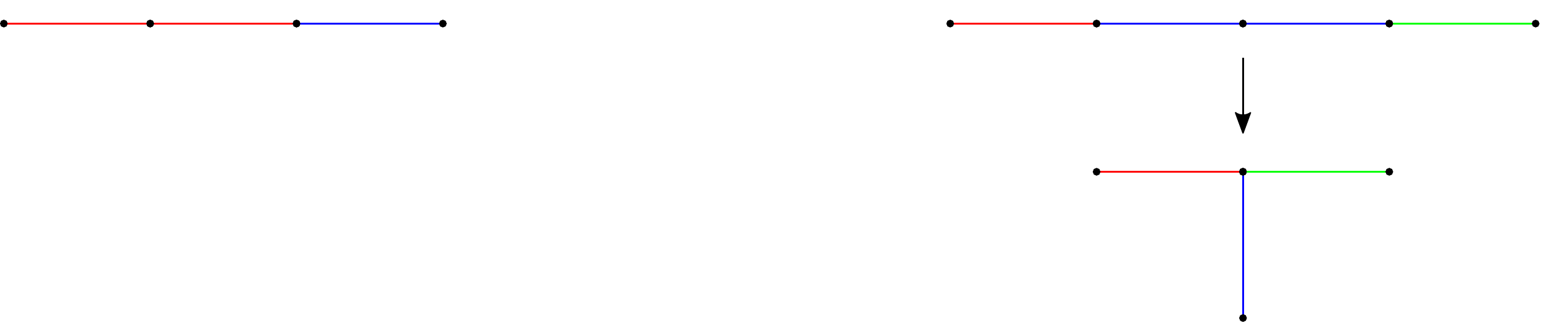_tex}
		\caption{Hyperplanes that were separated can be reunited thanks to Stallings.}
		\label{fig: reunited at last}

	\end{center}
\end{figure}

\begin{lemma}\label{thirdguy}

Let $f_0:\Hs{H}\to\Hs{H}'$ be a $G$-equivariant resolution and $\sim$ be an elementary fold as above. Let $\phi:\Hs{H}\to \Hs{H}/\hspace{-.15cm}\sim $ be the map associated to the fold.

Let $\hyp{h}$, $\hyp{k}$ and $\hyp{l} \in \Hs{H}$ such that $\hyp{k}$ separates $\hyp{h}$ and $\hyp{l}$. If their images by $f_0$ are not transverse and their images by $\phi$ are such that $[\hyp{k}]$ does not separate $[\hyp{h}]$ and $[\hyp{l}]$ then one of the following happens:
\begin{enumerate}
\item  $\hyp{h}\sim \hyp{k}$,
\item   $\hyp{l}\sim \hyp{k}$, or
\item \label{thirdcaselemmathirdguy} the hyperplanes $[\hyp{h}]$, $[\hyp{k}]$ and $[\hyp{l}]$ form a facing triple. 
\end{enumerate}
Moreover, in case (\ref{thirdcaselemmathirdguy}) 
there are exactly two elements of $[\hyp{k}]$ separating $\hyp{h}$ and $\hyp{l}$.
\end{lemma}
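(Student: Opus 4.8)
The plan is to analyze the situation by reducing to the structure of elementary folds established in \Obs \ref{unfolding1} and Lemma \ref{orientation}. First I would set up notation: let $\hs{h}_1\sim\hs{h}_2$ be the \elemfoldable pair generating the fold, and fix orientations $\hs{h},\hs{k},\hs{l}$ so that $\hs{h}<\comp{\hs{k}}$ and $\hs{k}<\hs{l}$ (possible since $\hyp{k}$ separates $\hyp{h}$ and $\hyp{l}$). The hypothesis that $[\hyp{k}]$ does not separate $[\hyp{h}]$ and $[\hyp{l}]$ after the fold means, via the defining order on the quotient pocset, that something prevents $[\hs{h}]<[\hs{k}]<[\hs{l}]$ from holding: either some member of $[\hyp{k}]$ becomes transverse to a member of $[\hyp{h}]$ or $[\hyp{l}]$, or the facing relation that would realize separation fails. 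By Property \ref{AMcrossinghyps} and \Obs \ref{no more crossing} (applied in its contrapositive form, using that the $f_0$-images are not transverse) no such transverse pair arises, so the obstruction must come from the hyperplanes in $[\hyp{k}]$ getting ``reunited'' with those of $[\hyp{h}]$ or $[\hyp{l}]$ — i.e. $\hyp{k}\sim\hyp{h}$, or $\hyp{k}\sim\hyp{l}$, or else $[\hyp{k}]$ now sits on the same side of $[\hyp{h}]$ as $[\hyp{l}]$ (and symmetrically), which forces a facing triple.

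The key steps, in order, would be: (1) Rule out transversality: since the $f_0$-images of $\hyp h,\hyp k,\hyp l$ are pairwise non-transverse, \Obs \ref{no more crossing} shows no pair among $[\hyp h],[\hyp k],[\hyp l]$ becomes transverse after $\phi$; so all three pairs are either nested or facing in the quotient. (2) Use the failure of separation: if $[\hyp{k}]$ does not separate $[\hyp{h}]$ and $[\hyp{l}]$ while all pairs are non-transverse, then by the definition of the order on $\Hs{H}\modsim$ there must exist representatives witnessing that $[\hyp{h}]$ and $[\hyp{l}]$ lie in a common halfspace bounded by a member of $[\hyp{k}]$ — concretely, invoke \Obs \ref{unfolding2}: if we had both $[\hs{h}]<[\hs{k}]$ and $[\hs{k}]<[\hs{l}]$ then a representative of $[\hyp{k}]$ would separate $\hyp{h}$ and $\hyp{l}$, giving $[\hs{h}]<[\hs{l}]$ and hence separation; so at least one of $[\hs h]<[\hs k]$, $[\hs k]<[\hs l]$ fails. (3) Translate a failed inequality, say $[\hs{h}]\not<[\hs{k}]$, using the description of $\sim$: since $\hyp{k}$ separates $\hyp{h}$ from $\hyp{l}$ in $\Hs{H}$, the hyperplanes $\hyp{h},\hyp{k}$ are disjoint, so the only way $[\hs{h}]<[\hs{k}]$ can fail is if, along the chain of \elemfoldable pairs realizing $\sim$ (as in Lemma \ref{orientation}), some member of $[\hyp{h}]$ and some member of $[\hyp{k}]$ are not \inseparable{$[\hyp h]\cup[\hyp k]$} in the right way, or they actually get identified, i.e. $\hyp{h}\sim\hyp{k}$. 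Carefully chasing the chain of $G$-translates of $\{\hyp{h}_1,\hyp{h}_2\}$ that merges things, I would conclude that either $\hyp{h}\sim\hyp{k}$ (case 1), or — dually, from $[\hs k]\not<[\hs l]$ — $\hyp{l}\sim\hyp{k}$ (case 2), or neither identification occurs and instead all three classes become pairwise facing, which is case (\ref{thirdcaselemmathirdguy}).

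For the ``moreover'' clause, I would argue as follows. Suppose we are in case (\ref{thirdcaselemmathirdguy}): $[\hyp h],[\hyp k],[\hyp l]$ form a facing triple, with $\hyp h\not\sim\hyp k$ and $\hyp l\not\sim\hyp k$. Since $\hyp{k}$ separates $\hyp{h}$ and $\hyp{l}$ but $[\hyp{k}]$ does not separate $[\hyp{h}]$ and $[\hyp{l}]$, the last part of \Obs \ref{unfolding1} (its converse direction) shows that $[\hyp{k}]\pitchfork[\hyp{h}]$ would follow from $\hyp{k}$ separating two $\sim$-equivalent hyperplanes — so in the facing case no member of $[\hyp{k}]$ separates two equivalent members of $[\hyp{h}]$ (and symmetrically for $[\hyp{l}]$). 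Now, by Lemma \ref{orientation} the class $[\hyp{k}]$ is facing, and consists exactly of the $G$-chain of \elemfoldable pairs; the members of $[\hyp{k}]$ that separate $\hyp{h}$ and $\hyp{l}$ are precisely the ``boundary'' hyperplanes of this facing sub-collection relative to the region between $\hyp{h}$ and $\hyp{l}$. A facing collection of hyperplanes has, between any two fixed disjoint hyperplanes $\hyp h,\hyp l$ on opposite sides of it, exactly two ``extremal'' members — the one closest to $\hyp{h}$ and the one closest to $\hyp{l}$ — since the intermediate ones would be separated from $\hyp h$ (resp. $\hyp l$) by these extremal ones, hence do not themselves separate $\hyp h$ from $\hyp l$; that these two are genuinely distinct is exactly what prevents $[\hyp k]$ from separating $[\hyp h]$ from $[\hyp l]$ while each of $\hyp h\not\sim\hyp k$, $\hyp l\not\sim\hyp k$ holds. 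I expect the main obstacle to be step (3): cleanly extracting, from the combinatorial failure of an inequality in the quotient pocset, the precise dichotomy between ``$\hyp{k}$ is absorbed into $[\hyp{h}]$ or $[\hyp{l}]$'' and ``a facing triple forms,'' which requires a careful hands-on analysis of the chain of $G$-translates of the \elemfoldable pair and the inseparability bookkeeping, much in the spirit of the proof of \Obs \ref{unfolding1}.
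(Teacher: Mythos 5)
Your steps (1) and (2) are correct and match the paper's strategy: \Obs \ref{no more crossing} rules out transversality in the quotient, and the failure of both $[\hs h]<[\hs k]$ and $[\hs k]<[\hs l]$ follows from the hypothesis that $[\hyp k]$ does not separate. However, step (3) contains a genuine gap that you acknowledge but do not close. Knowing that one of these inequalities fails does not directly yield the trichotomy; you must also rule out the configuration in which $[\hyp h]$ or $[\hyp l]$ separates the other two (while no identification with $\hyp k$ occurs). Your suggestion to ``chase the chain of $G$-translates of $\{\hyp h_1,\hyp h_2\}$'' is exactly the kind of bookkeeping that the paper avoids: instead of analyzing the chain, the paper observes that three pairwise disjoint hyperplane classes either form a facing triple or have one of them separating the other two; since $[\hyp k]$ doesn't separate, WLOG $[\hyp h]$ separates $[\hyp k]$ from $[\hyp l]$; by \Obs \ref{unfolding2} this produces a representative $\hyp h''\in[\hyp h]$ between $\hyp k$ and $\hyp l$; since $\hyp k$ lies between $\hyp h$ and $\hyp l$ it then separates $\hyp h$ from $\hyp h''$, and the converse of \Obs \ref{unfolding1} forces $[\hyp h]\pitchfork[\hyp k]$, contradicting \Obs \ref{no more crossing}. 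This short argument is what is missing in your step (3); without it the trichotomy is asserted rather than proved.

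Your treatment of the ``moreover'' clause is essentially in the spirit of the paper's: you correctly identify that at least two members of $[\hyp k]$ must lie between $\hyp h$ and $\hyp l$ (since the quotient is a facing triple and $\hyp k$ alone cannot produce a facing $[\hs k]$ with both $[\hs h]$ and $[\hs l]$), and that at most two can do so because $[\hyp k]$ is a facing collection by Lemma \ref{orientation}, and a facing collection admits at most two ``extremal'' members between two fixed hyperplanes on opposite sides. This is aligned with the paper, though you should make the ``at most two'' part explicit (a third member of the facing collection between $\hyp h$ and $\hyp l$ would force one of the three to separate the other two, contradicting the facing property).
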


\begin{proof}
Assume that $[\hyp{h}] \neq [\hyp{k}]$ and $[\hyp{l}] \neq [\hyp{k}]$. Since $[\hyp{k}]$ does not separates $[\hyp{h}]$ and $[\hyp{l}]$, if the three hyperplanes do not form a facing triple then $[\hyp{h}]$ or $[\hyp{l}]$ separates the other two. Without loss of generality we can assume that $[\hyp{h}]$ separates $[\hyp{k}]$ and $[\hyp{l}]$. By Lemma \ref{unfolding2}, there would be a representative of $[\hyp{h}]$ in between $\hyp{k}$ and $\hyp{l}$. But by the converse part of Lemma \ref{unfolding1}, the hyperplanes $[\hyp{h}]$ and $[\hyp{k}]$ would be transverse, which by Lemma \ref{no more crossing} is a contradiction.

To prove the last part of the lemma, notice that there are at least two lifts of $[\hyp{k}']$ in between $\hyp{h}$ and $\hyp{k}$. Indeed under the orientation such that all three halfspaces are facing, every inseparable lift of pairs should be facing, but this implies that there is more than one lift of $[\hyp{k'}]$ in between $\hyp{h}$ and $\hyp{l}$.
Lemma \ref{orientation} shows that there are exactly two.
\end{proof}

\section{Folding sequences}

Let us now state the key lemma in proving that resolutions can be decomposed as sequences of folds.
\begin{lemma}\label{one factorization}
Let $f_0:\Hs{H}\to\Hs{H}'$ be a $G$-equivariant resolution. Let $\sim$ be an elementary fold and $\phi:\Hs{H}\leadsto \Hs{H}/\hspace{-.15cm}\sim $ be the map associated to the fold as above. Then the map $f_1:\Hs{H}/\hspace{-.15cm}\sim\to\Hs{H}'$ is admissible and $(\Hs{H}/\hspace{-.15cm}\sim)/\hspace{-.15cm}\sim_{f_1} =\Hs{H}/\hspace{-.15cm}\sim_{f_0}$ as pocsets. In particular $f_1$ is a resolution.
\end{lemma}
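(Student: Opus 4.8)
The plan is to verify the four admissibility axioms \ref{AMcompinv}--\ref{AMcompactible_image} for $f_1 : \Hs{H}/\hspace{-.15cm}\sim \;\to \Hs{H}'$, and then to check that $\sim_{f_1}$, pulled back along $\phi$, agrees exactly with $\sim_{f_0}$. Note first that $f_1$ is well-defined: since $\sim$ is a $G$-invariant $\comp{}$-invariant equivalence relation that identifies an \elemfoldable pair $\hs{h}_1\sim\hs{h}_2$, and $f_0(\hs{h}_1)=f_0(\hs{h}_2)$ by definition of \elemfoldable, the relation $\sim$ refines $\sim_{f_0}$, so $f_0$ factors through $\phi$. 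Axiom \ref{AMcompinv} for $f_1$ is then immediate from \ref{AMcompinv} for $f_0$ together with the definition $\comp{[\hs{h}]} = [\comp{\hs{h}}]$.

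The substantive points are \ref{AMcrossinghyps} and \ref{AMfacinghyps}. For \ref{AMcrossinghyps}: if $[\hyp{h}]\pitchfork[\hyp{k}]$ in $\Hyp{H}/\hspace{-.15cm}\sim$, then \Obs \ref{no more crossing} applied to the fold $\phi$ gives directly that $f_0(\hyp{h})$ and $f_0(\hyp{k})$ are transverse in $\Hs{H}'$, i.e. $f_1([\hyp{h}])\pitchfork f_1([\hyp{k}])$. For \ref{AMfacinghyps}: suppose $[\hs{h}]$ and $[\hs{k}]$ are facing halfspaces in $\Hs{H}/\hspace{-.15cm}\sim$ with $f_1([\hyp{h}])=f_1([\hyp{k}])$ and \inseparable{$f_1^{-1}(f_1([\hyp{h}]))$}. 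I would lift to $\Hs{H}$: choose representatives $\hyp{h},\hyp{k}$ that are \inseparable{$[\hyp{h}]$} (equivalently \inseparable{$[\hyp{k}]$}, since $[\hyp{h}]=[\hyp{k}]$); by Lemma \ref{orientation} the class $[\hyp{h}]$ is facing, so with $\hs{h},\hs{k}$ in the facing orientation provided by Lemma \ref{orientation}, the pair $\hs{h},\hs{k}$ is facing in $\Hs{H}$. Since $f_0(\hyp{h})=f_1([\hyp{h}])=f_1([\hyp{k}])=f_0(\hyp{k})$ and the inseparability downstairs forces $\hyp{h},\hyp{k}$ to be \inseparable{$f_0^{-1}(f_0(\hyp{h}))$}, axiom \ref{AMfacinghyps} for $f_0$ yields $f_0(\hs{h})=f_0(\hs{k})$, hence $f_1([\hs{h}])=f_1([\hs{k}])$ as required. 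The point to be careful about here is matching up the inseparability hypotheses upstairs and downstairs, which is where Remark \ref{existence of inseparable} and \Obs \ref{observation} get used. Axiom \ref{AMcompactible_image} for $f_1$ follows from \ref{AMcompactible_image} for $f_0$ together with the observation that $f_0(\Hyp{H}) = f_1(\Hyp{H}/\hspace{-.15cm}\sim)$, so the set of hyperplanes in $\Hyp{H}'$ not in the image is unchanged and the same orientations witness compatibility, once one checks $f_1(\Hs{H}/\hspace{-.15cm}\sim) = f_0(\Hs{H})$ as sets of halfspaces.

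Finally, for the identity $(\Hs{H}/\hspace{-.15cm}\sim)/\hspace{-.15cm}\sim_{f_1} = \Hs{H}/\hspace{-.15cm}\sim_{f_0}$ of pocsets, I would argue that the composite surjection $\Hs{H}\overset{\phi}{\leadsto}\Hs{H}/\hspace{-.15cm}\sim \to (\Hs{H}/\hspace{-.15cm}\sim)/\hspace{-.15cm}\sim_{f_1}$ has the same fibers as $f_0$: indeed $\hs{h}$ and $\hs{k}$ map to the same element iff $[\hs{h}]\sim_{f_1}[\hs{k}]$ iff $f_1([\hs{h}])=f_1([\hs{k}])$ iff $f_0(\hs{h})=f_0(\hs{k})$ iff $\hs{h}\sim_{f_0}\hs{k}$. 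Since $\sim$ refines $\sim_{f_0}$ (as noted above), this composite surjection factors as $\Hs{H}\to \Hs{H}/\hspace{-.15cm}\sim_{f_0}$ followed by a bijection onto $(\Hs{H}/\hspace{-.15cm}\sim)/\hspace{-.15cm}\sim_{f_1}$, and this bijection is an isomorphism of pocsets because on both sides the order and complementation are the ones induced from $f_0$ via the embedding $\Hs{H}/\hspace{-.15cm}\sim_{f_0}\embedsin \Hs{H}'$ (using that $f_0$ is a resolution). Then $f_1$ is a resolution: $f_1$ is admissible by the above, and $(\Hs{H}/\hspace{-.15cm}\sim)/\hspace{-.15cm}\sim_{f_1} = \Hs{H}/\hspace{-.15cm}\sim_{f_0}$ embeds into $\Hs{H}'$ since $f_0$ is a resolution. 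The main obstacle I anticipate is the bookkeeping in \ref{AMfacinghyps}: ensuring that "facing and \inseparable{} in the quotient" can be lifted to "facing and \inseparable{} in $\Hs{H}$" without circularity, which I expect to handle by invoking Lemma \ref{orientation} to pin down the orientations before comparing with the hypotheses of \ref{AMfacinghyps} for $f_0$.
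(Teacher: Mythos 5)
Your proposal correctly identifies the general architecture (verify the four axioms, then compare the two quotient pocsets), but two of the substantive steps have genuine gaps.

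First, the verification of \ref{AMfacinghyps}. You pick lifts $\hyp{h},\hyp{k}$ that are \inseparable{$[\hyp{h}]$} (i.e.\ no $\sim$-equivalent hyperplane separates them) and then assert that ``the inseparability downstairs forces $\hyp{h},\hyp{k}$ to be \inseparable{$f_0^{-1}(f_0(\hyp{h}))$}.'' This is false, and it is precisely the crux of the proof. There can perfectly well be a hyperplane $\hyp{l}$ with $f_0(\hyp{l})=f_0(\hyp{h})$ that separates $\hyp{h}$ and $\hyp{k}$ upstairs: by Lemma~\ref{thirdguy}, after the fold $[\hyp{l}]$ may fail to separate $[\hyp{h}]$ and $[\hyp{k}]$ not because it was $\sim$-equivalent to one of them, but because $[\hyp{h}],[\hyp{l}],[\hyp{k}]$ become a facing triple; in that case two elements of $[\hyp{l}]$ sit between $\hyp{h}$ and $\hyp{k}$. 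So the intervening hyperplanes of $f_0^{-1}(f_0(\hyp{h}))$ do not vanish --- they come in $\sim$-equivalent pairs. One must then count: the total number is even, and by \ref{AMfacinghyps} for $f_0$ they form an alternating sequence of facing and incompatible pairs, so an even number of orientation-flips occurs and $f_0(\hs{h})=f_0(\hs{k})$. Your argument, which applies \ref{AMfacinghyps} for $f_0$ directly to $\hs{h},\hs{k}$, is only valid if there is nothing in between, and that is exactly the unjustified claim.

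Second, the pocset identity $(\Hs{H}/\hspace{-.15cm}\sim)/\hspace{-.15cm}\sim_{f_1} = \Hs{H}/\hspace{-.15cm}\sim_{f_0}$. Checking that the underlying sets agree is easy (your fiber argument is fine), but you then assert the induced bijection is a pocset isomorphism ``because on both sides the order and complementation are the ones induced from $f_0$ via the embedding $\Hs{H}/\hspace{-.15cm}\sim_{f_0}\embedsin\Hs{H}'$.'' This is circular: the pocset structure on $(\Hs{H}/\hspace{-.15cm}\sim)/\hspace{-.15cm}\sim_{f_1}$ is \emph{a priori} the intrinsic quotient structure of $\Hs{H}/\hspace{-.15cm}\sim$ by $\sim_{f_1}$, and showing that this agrees with the structure pulled back from $\Hs{H}'$ is precisely what ``$f_1$ is a resolution'' means, which is what you are trying to prove. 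The nontrivial work is to compare the two intrinsic quotient orders directly: if $\hs{h}_0\pitchfork\hs{k}_0$ then $\hs{h}_1\pitchfork\hs{k}_1$, and if $\hs{h}_0<\hs{k}_0$ then $\hs{h}_1<\hs{k}_1$. Both require the facing-triple analysis of Lemma~\ref{thirdguy} together with Lemma~\ref{no more crossing} and \Obs~\ref{observation}; neither is a formality. Until those comparisons are made, the claimed isomorphism of pocsets is not established.
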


\begin{proof}
To simplify the notation, we denote $\Hs{H}_\sim$, $\Hs{H}_0$ and $\Hs{H}_1$ instead of $\Hs{H}/\hspace{-.15cm}\sim$, $\Hs{H}/\hspace{-.15cm}\sim_{f_0}$ and  $(\Hs{H}/\hspace{-.15cm}\sim)/\hspace{-.15cm}\sim_{f_1}$.
Elements in $\Hs{H}_\sim$, $\Hs{H}_0$ and $\Hs{H}_1$ will be denoted with indices $\cdot_\sim$, $\cdot_0$ and $\cdot_1$.

We first show that $f_1$ is admissible. 
\begin{itemize}
\item Properties \ref{AMcompinv} and \ref{AMcompactible_image} clearly follow from those of $f_0$.
\item Property \ref{AMcrossinghyps} is given by \Obs \ref{no more crossing}
\item Property \ref{AMfacinghyps}. 
Let $\hs{h}_\sim$ and $\hs{k}_\sim$ be facing halfspaces in $\Hs{H}_\sim$ and assume that $f_1(\hyp{h}_\sim) = f_1(\hyp{k}_\sim)$ and that they are \inseparable{$f_1^{-1}(f_1(\hyp{h}_\sim))$}.

Let $\hs{h}\in \phi^{-1}(\hs{h}_\sim)$ and $\hs{k}\in \phi^{-1}(\hs{k}_\sim)$ be \inseparable{$\phi^{-1}(\hyp{h}_\sim)\cup \phi^{-1}(\hyp{k}_\sim)$} hyperplanes. No element of $f_0^{-1}(f_0(\hyp{h}))$  in between $\hyp{h}$ and $\hyp{k}$ is $\sim$-equiva\-lent to $\hyp{h}$ or $\hyp{k}$. 
Since $\hyp{h}_\sim$ and $\hyp{k}_\sim$ are \inseparable{$f_1^{-1}(f_1(\hyp{h}_\sim))$}, they are not separated by the images under $\phi$ of the elements of $f_0^{-1}(f_0(\hyp{h}))$ that separate $\hyp{h}$ and $\hyp{k}$ in $\Hs{H}$. 
Thus by Lemma \ref{thirdguy} the  number of elements in $f_0^{-1}(f_0(\hyp{h}))$ in between $\hyp{h}$ and $\hyp{k}$ is even because they come in pairs of $\sim$ equivalent hyperplanes. Now  using Property \ref{AMfacinghyps} of $f_0$, we obtain that these elements form an alternating sequence of facing and incompatible pairs, therefore $f_0(\hs{h}) = f_0(\hs{k})$, hence $f_1(\hs{h}_\sim) = f_1(\hs{k}_\sim)$.
\end{itemize}

We are left to show that $\Hs{H}_0=\Hs{H}_1$. By definition there is a bijection between the two sets, and it is easy to see that it commutes with $\comp{}$. We need to show that the pocset structure is the same. That is, given two elements $\hs{h}$ and $\hs{k}$ in $\Hs{H}$,
\begin{itemize}
\item if $\hs{h}_0$ and $\hs{k}_0$ are transverse then  $\hs{h}_1$ and $\hs{k}_1$ are transverse,
\item if $\hs{h}_0 < \hs{k}_0$ then  $\hs{h}_1 < \hs{k}_1$.
\end{itemize}

For the first bullet point, assume $\hs{h}_0$ and $\hs{k}_0$ are transverse. Then, from the definition of the partial order of a quotient, two cases may occur.
\begin{itemize}
 \item There are preimages in $\Hs{H}$ that are transverse, in which case, by construction of $\Hs{H}_\sim$ and Property \ref{AMcrossinghyps} of $f_1$, the halfspaces $\hs{h}_1$ and $\hs{k}_1$ are transverse. 
\item There are two pairs of halfspaces $(\hs{h}, \hs{k})$ and $(\hs{h}',\hs{k}')$ in $f_0^{-1}(\hs{h}_0)\times f_0^{-1}(\hs{k}_0)$ that are \inseparable{($f_0^{-1}(\hs{h}_0)\cup f_0^{-1}(\hs{k}_0)$)} with contradictory orientations.
But then their images $(\hs{h}_\sim, \hs{k}_\sim)$ and $(\hs{h}'_\sim,\hs{k}'_\sim)$ are transverse or inseparable with contradictory orientations, therefore $\hs{h}_1$ and $\hs{k}_1$ are transverse.
\end{itemize}

For the second bullet point, let $\hs{h}_1$ and $\hs{k}_1$ be the elements associated to $\hs{h}_0$ and $\hs{k}_0$. We want to show that $\hs{h}_1 <\hs{k}_1$.

By definition $\hs{h}_1 <\hs{k}_1$ if no element of $f_1^{-1}(\hs{h}_1)$ is transverse to an element of $f_1^{-1}(\hs{k}_1)$ in  $\Hs{H}_\sim$ and, for any elements $\hs{h}_\sim \in f_1^{-1}(\hs{h}_1)$ and $\hs{k}_\sim \in f_1^{-1}(\hs{k}_1)$ with \inseparable{($f_1^{-1}(\hs{h}_1)\cup f_1^{-1}(\hs{k}_1)$)} hyperplanes, we have $\hs{h}_\sim < \hs{k}_\sim$.

The fact that no elements in $f_1^{-1}(\hs{h}_1)$ and $f_1^{-1}(\hs{k}_1)$ are transverse is a direct application of Lemma \ref{no more crossing}.

Let $\hs{h}_\sim \in f_1^{-1}(\hs{h}_1)$ and $\hs{k}_\sim \in f_1^{-1}(\hs{k}_1)$ be \inseparable{$f_1^{-1}(\hs{h}_1)\cup f_1^{-1}(\hs{k}_1)$}. First notice that no elements of  $\phi^{-1}(\hs{h}_\sim)$ and $\phi^{-1}(\hs{k}_\sim)$  in $\Hs{H}$ are transverse. This is direct since $f_0$ is a resolution and $f_0(\hs{h})<f_0(\hs{k})$.

 It is sufficient to show that $\hyp{h}_\sim \subset \hs{k}_\sim$, then by symmetry of the argument $\hyp{k}_\sim \subset \comp{\hs{h}}_\sim$.
 
Let $\hs{h} \in \phi^{-1}(\hs{h}_\sim)$ and $\hs{k} \in \phi^{-1}(\hs{k}_\sim)$ be \inseparable{($\phi^{-1}(\hs{h}_\sim)\cup \phi^{-1}(\hs{k}_\sim)$)}.
Let $\hs{k}'$ be an element of $f_0^{-1}(\hs{k}_0)$, such that $\hyp{k}'$ and $\hyp{h}$ are \inseparable{$f_0^{-1}(\hyp{k}_0)$}, and is between $\hyp{h}$ and $\hyp{k}$ or equal to $\hyp{k}$. 
Since $\hs{h}_0 <\hs{k}_0$, we have $\hyp{h}\subset \hs{k}'$, and thus by inseparability $\hyp{h}_\sim \subset \hs{k}'_\sim$.

Applying Lemma \ref{thirdguy}, the hyperplanes $\hyp{k}_\sim,\hyp{k}'_\sim, \hyp{h}_\sim$ form a facing triple.
As $\hyp{h}_\sim \subset \hs{k}'_\sim$, we have $\hyp{k}_\sim \subset \hs{k}'_\sim$. 
Since $\hs{k}_\sim$  and $\hs{h}_\sim$ and $\hs{k}'_\sim$  and $\hs{h}_\sim$ are \inseparable{($f_1^{-1}(\hs{h}_1)\cup f_1^{-1}(\hs{k}_1)$)}, the hyperplanes $\hs{k}_\sim$  and $\hs{k}'_\sim$ also are \inseparable{($f_1^{-1}(\hs{h}_1)\cup f_1^{-1}(\hs{k}_1)$)}. And since $f_1$ is admissible, the halfspace $\hs{k}_\sim$ is oriented such that $\hyp{k}'_\sim \subset \hs{k}_\sim$, and thus $\hyp{h}_\sim \subset \hs{k}_\sim$.

Hence we have $\Hs{H}_0 =  \Hs{H}_1$.
\end{proof}

Lemma \ref{one factorization} together with Remark \ref{rmk: an elementary fold exists} show that as long as the resolution $f$ is not an embedding, it admits an elementary fold $\phi$, and it can be decomposed as $f=f_1 \circ \phi$ where $f_1$ is a resolution. 
Iterating this gives us a factorization of $f$ into a sequences of folds.

Proposition \ref{finite_folding} shows, under some conditions, that if we choose the folds correctly then this process terminates.



\begin{proposition}\label{finite_folding}
Let $G$ be a group, let $\Hs{H},\Hs{H}'$ be $G$-pocsets, and let $f:\Hs{H}\to\Hs{H}'$ be a $G$-equivariant resolution. If $G$ has only finitely many orbits in $\Hs{H}$, and the $G$-stabilizers of hyperplanes in $\Hyp{H}'$ are finitely generated, then there exists a finite folding sequence $$ \Hs{H}=\Hs{H}_0 \leadsto \Hs{H}_1 \leadsto \ldots \leadsto \Hs{H}_n=\Hs{H}/\hspace{-.15cm}\sim_f\embedsin\Hs{H}'$$
\end{proposition}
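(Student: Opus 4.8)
The plan is to combine Lemma~\ref{one factorization} with a well-chosen termination argument. By Lemma~\ref{one factorization} together with Remark~\ref{rmk: an elementary fold exists}, as long as the current resolution $f_i\colon\Hs{H}_i\to\Hs{H}'$ is not an embedding, we may pick an elementary fold $\phi_i\colon\Hs{H}_i\leadsto\Hs{H}_{i+1}$ and obtain a resolution $f_{i+1}\colon\Hs{H}_{i+1}\to\Hs{H}'$ with $\Hs{H}_{i+1}/\hspace{-.15cm}\sim_{f_{i+1}}=\Hs{H}_i/\hspace{-.15cm}\sim_{f_i}=\Hs{H}/\hspace{-.15cm}\sim_f$. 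The entire issue is therefore termination: I must show that, with a suitable choice of which \elemfoldable pair to fold at each step, the process halts after finitely many folds, necessarily at $\Hs{H}/\hspace{-.15cm}\sim_f$ (since a resolution which is an embedding equals the identity on the already-folded quotient).

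The key structural feature to exploit is that $G$ acts on $\Hs{H}$ with finitely many orbits of hyperplanes, so $G$ acts cocompactly on the dual complex $\CC{X}=\CC{X}(\Hs{H})$, and similarly on each $\CC{X}_i$; moreover by hypothesis the hyperplane stabilizers in $\Hyp{H}'$ are finitely generated, and since $f$ is $G$-equivariant each such stabilizer $\Stab_G(\hyp{h}')$ acts (via the identification with $\Stab_G$ of a preimage orbit, or rather the stabilizer of the equivalence class) on the finite set of orbits of hyperplanes mapping to $\hyp{h}'$. The guiding analogy is the tree case (Bestvina--Feighn, \cite[Proposition p.455]{BeFe91}): there one folds two edges in a common orbit that have a common endpoint, tracks the complexity by the number of orbits of edges weighted appropriately, and uses finite generation of edge stabilizers of $T'$ to see that only finitely many folds can fail to decrease the number of orbits. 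I would set up a complexity function in the same spirit: roughly, the number of $G$-orbits of hyperplanes in $\Hs{H}_i$, refined by, for each orbit $o$ of hyperplanes in $\Hs{H}'$ hit by $f_i$, the number of $G$-orbits of hyperplanes in $f_i^{-1}(o)$ together with data recording how far a generating set of $\Stab_G(\hyp{h}')$ is from being ``realized'' inside $\Hs{H}_i$. An elementary fold never increases the number of $G$-orbits of hyperplanes; the folds that keep it constant are exactly those (as in the tree case, cf.\ the ``foldable pair in a common orbit fixed setwise by a group element'' phenomenon visible in Example~\ref{example of folding sequence}, second fold) where a hyperplane and a $G$-translate of it in the same $\sim_f$-class get identified, and these correspond to adding one relation to a presentation of the relevant hyperplane stabilizer; finite generation bounds how many consecutive such folds can occur.

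Concretely, the steps I would carry out are: (1) reduce to the case where $\Hs{H}=\Hs{H}/\hspace{-.15cm}\sim_f$ does not already hold and produce \emph{some} elementary fold via Remark~\ref{rmk: an elementary fold exists}; (2) define the complexity and prove it is non-increasing under \emph{any} elementary fold, using \Obs~\ref{no more crossing} (no new transversalities mod $\sim_f$) and Lemma~\ref{one factorization} (the target quotient is preserved), so that the number of $G$-orbits of hyperplanes in $\Hs{H}_i$ is a weakly decreasing sequence of non-negative integers, hence eventually constant; (3) once it is constant, argue that each further fold identifies two hyperplanes lying in the same $G$-orbit, whose $\sim_f$-classes agree, so that passing to the quotient corresponds, on the level of the stabilizer $S=\Stab_G(\hyp{h}')$ of their common image, to killing an element of a free group on a fixed finite generating set of $S$ together with the (finitely many) orbit-representatives of $f_i^{-1}(\hyp{h}')$; and (4) conclude via finite generation of $S$ that only finitely many such folds are possible before $f_i^{-1}(\hyp{h}')$ consists of a single $\sim_f$-class for every $\hyp{h}'$, i.e.\ $f_n$ is injective on hyperplanes, hence (being a resolution) an embedding, hence $\Hs{H}_n=\Hs{H}/\hspace{-.15cm}\sim_f$.

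The main obstacle, as in \cite{BeFe91}, is step (3)--(4): making precise the bookkeeping that identifies ``folds that do not reduce the orbit count'' with ``relations in a finitely generated hyperplane stabilizer,'' and in particular checking that the \elemfoldable condition (minimality: no $\sim_f$-equivalent pair separates $\hyp{h}_1,\hyp{h}_2$) is exactly what guarantees the fold is ``visible'' at the level of the stabilizer of the common image hyperplane rather than wandering through infinitely many orbits. One must also rule out an infinite sequence of orbit-count-preserving folds that never stabilizes any single hyperplane stabilizer — this is where I would invoke, via a diagonal or per-orbit argument over the finitely many $G$-orbits of hyperplanes in $\Hs{H}'$ hit by $f$, that each hyperplane stabilizer is finitely generated, so each contributes only finitely many reducing-to-constant folds, and there are only finitely many orbits to handle.
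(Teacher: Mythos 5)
Your high-level strategy is in the right spirit and does parallel the paper's: bound the number of folds by combining finiteness of $G$-orbits in $\Hs{H}$ with finite generation of hyperplane stabilizers in $\Hs{H}'$, in the mold of Bestvina--Feighn. But there is a genuine gap precisely where you flag the ``main obstacle,'' and you do not close it. The paper's termination argument is driven by a \emph{targeted} folding procedure, supported by a separate lemma (their ``folding two guys'' lemma): given \emph{any} two halfspaces $\hs{h},\hs{k}\in\Hs{H}$ with $f(\hs{h})=f(\hs{k})$, there exists a \emph{finite} sequence of elementary folds after which $\hyp{h}$ and $\hyp{k}$ are identified. The proof of that lemma is an induction, via \Obs~\ref{unfolding2}, on the number of $\sim_f$-equivalent pairs of hyperplanes lying between $\hyp{h}$ and $\hyp{k}$, and it is exactly what lets one target a prescribed identification rather than fold blindly. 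You never state or prove such a tool, and without it you cannot implement either phase of the termination argument: you cannot force the number of $G$-orbits down to $|f(\Hyp{H})/G|$ in finitely many folds (your step~(2) only gives that the orbit count is non-increasing and eventually constant, not that it reaches the minimum, nor that the sequence is finite), and you cannot force a missing generator $s\in S_{\hyp{h}'}\setminus\Stab_G(\hyp{h})$ into the stabilizer of a representative $\hyp{h}$ by identifying $\hyp{h}$ with $s\hyp{h}$.

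Your complexity function is also not pinned down in a way that makes steps (3)--(4) go through. The paper's complexity is simply $c_f=\sum_{\hyp{h}'} c_{\hyp{h}'}$ where $c_{\hyp{h}'}$ counts how many of a fixed finite generating set of $\Stab_G(\hyp{h}')$ are \emph{not yet} in $\Stab_G(\hyp{h})$ for a chosen representative $\hyp{h}\in f^{-1}(\hyp{h}')$; nothing about presentations, relations, or ``killing an element of a free group'' — it is just subgroup growth. Each targeted identification $\hyp{h}\sim s\hyp{h}$ (achieved in finitely many elementary folds via the missing lemma) puts $s$ in the stabilizer and strictly decreases $c_f$. Your framing in terms of relations of stabilizers points in a different (and more complicated) direction, and, more importantly, you never verify that a \emph{single} elementary fold can realize the identification of a given pair; it generally cannot, and that is what the intermediate lemma is for. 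So the proposal identifies the correct ingredients but leaves the crucial mechanism — finite realizability of a prescribed $\sim_f$-identification by elementary folds — unaddressed.
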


Let us first prove the following lemma.
\begin{lemma} \label{folding two guys}
Let $G$ be a group, let $\Hs{H},\Hs{H}'$ be $G$-pocsets, and let $f:\Hs{H}\to\Hs{H}'$ be a $G$-equivariant resolution. Given two halfspaces $\hs{h}$ and $\hs{k}$ of $\Hs{H}$ that have the same image under $f$, then there exists a finite sequence of folds $ \Hs{H}\leadsto \Hs{H}_1 \leadsto \ldots \leadsto \Hs{H}_n$ such that $\hs{h}$ and $\hs{k}$ are identified in $\Hs{H}_n$.
\end{lemma}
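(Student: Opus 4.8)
The plan is to perform a finite induction on the number of hyperplanes of $f_0^{-1}(f_0(\hyp h))$ lying between $\hyp h$ and $\hyp k$, using Lemma \ref{one factorization} at each step so that after each fold we are still looking at a $G$-equivariant resolution. First I would fix representatives so that $\hyp h$ and $\hyp k$ are \inseparable{$f^{-1}(f(\hyp h))$}: by Remark \ref{existence of inseparable} we may replace $\hyp h$ (resp. $\hyp k$) by an equivalent hyperplane so that the pair is inseparable among the preimages of $f(\hyp h)$, and since the action has no inversions and $f$ is admissible we keep the same image. Now among the (finitely many, by local finiteness) hyperplanes of $f^{-1}(f(\hyp h))$ lying between $\hyp h$ and $\hyp k$ — together with $\hyp h,\hyp k$ themselves — I would pick a minimal pair, i.e. two $\sim_f$-equivalent facing halfspaces $\hs a_1\sim_f\hs a_2$ with no $\sim_f$-equivalent facing pair strictly inside. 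By the definition of \elemfoldable such a pair, being inseparable with nothing equivalent strictly between, is an elementary foldable pair, exactly as in Remark \ref{rmk: an elementary fold exists}.

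Next I would let $\phi:\Hs H\leadsto\Hs H_\sim$ be the elementary fold of $\hs a_1\sim\hs a_2$. By Lemma \ref{one factorization} the induced map $f_1:\Hs H_\sim\to\Hs H'$ is again a $G$-equivariant resolution, and $f_1(\phi(\hs h))=f(\hs h)=f(\hs k)=f_1(\phi(\hs k))$, so $\phi(\hs h)$ and $\phi(\hs k)$ still have the same image under the new resolution. The key point is that the fold strictly decreases the relevant counting quantity. I would make this precise by tracking the number of $f_1$-preimages of $f_1(\phi(\hyp h))$ lying between $\phi(\hyp h)$ and $\phi(\hyp k)$ (with inseparable representatives chosen as above). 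Collapsing the pair $\hyp a_1\sim\hyp a_2$ — together with the whole $G$-orbit of pairs generating $\sim$ — identifies at least these two hyperplanes in the quotient and, by Lemma \ref{thirdguy} and \Obs \ref{unfolding1}, does not create new separating preimages strictly between $\phi(\hyp h)$ and $\phi(\hyp k)$: any hyperplane of $f_0^{-1}(f_0(\hyp h))$ not equivalent to $\hyp h$ or $\hyp k$ that separated $\hyp h,\hyp k$ either stays separating or enters a facing triple, but in the latter case its class no longer separates and so it drops out of the count, while transversality cannot be created by \Obs \ref{no more crossing}. Hence the number of separating $f_1$-preimages is strictly smaller than the number of separating $f_0$-preimages.

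Finally I would close the induction: when that number reaches zero, $\phi^{(n)}(\hyp h)$ and $\phi^{(n)}(\hyp k)$ are \inseparable{$f_n^{-1}(f_n(\phi^{(n)}(\hyp h)))$} with no other equivalent hyperplane between them, so they themselves form an elementary foldable pair, and one more elementary fold identifies them; compose all the folds to get the desired finite sequence $\Hs H\leadsto\Hs H_1\leadsto\cdots\leadsto\Hs H_n$ in which $\hs h$ and $\hs k$ become equal. The main obstacle I anticipate is bookkeeping the "number of separating preimages" so that it is genuinely well-defined after each fold and genuinely monotone: one must be careful that passing to inseparable representatives at each stage does not reintroduce separators, and that the facing-triple phenomenon of Lemma \ref{thirdguy} (where two preimages of the same class sit between $\hyp h$ and $\hyp k$) is correctly accounted for — this is precisely where Lemma \ref{thirdguy}'s "exactly two" clause is needed, to guarantee parity works out and the count strictly drops rather than merely not increasing.
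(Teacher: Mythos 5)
There is a genuine gap in your choice of induction quantity, and it also breaks your base case. You propose to induct on the number of hyperplanes of $f^{-1}(f(\hyp{h}))$ lying between $\hyp{h}$ and $\hyp{k}$, and at each step you look for a minimal $\sim_f$-pair among these. But an elementary foldable pair need not lie in $f^{-1}(f(\hyp{h}))$: the definition of elementary foldable forbids \emph{any} $\sim_f$-equivalent facing pair strictly between, regardless of which hyperplane of $\Hs{H}'$ it maps to. Concretely, take a linear pocset with hyperplanes $\hyp{h}_0<\hyp{h}_1<\hyp{h}_2<\hyp{h}_3$ where $f(\hyp{h}_0)=f(\hyp{h}_3)=\hyp{a}'$ and $f(\hyp{h}_1)=f(\hyp{h}_2)=\hyp{b}'$, and try to identify $\hs{h}_0$ and $\hs{h}_3$. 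Your count is already zero, and your base case asserts that $\hyp{h}_0,\hyp{h}_3$ therefore form an elementary foldable pair and one fold finishes. But $\hyp{h}_0,\hyp{h}_3$ is \emph{not} elementary foldable because the $\sim_f$-pair $\hyp{h}_1,\hyp{h}_2$ separates them; the first fold you are allowed to perform is $\hyp{h}_1\sim\hyp{h}_2$, which maps to $\hyp{b}'$ and does not change your counting quantity at all. More generally, whenever the only available elementary fold inside $[\hyp{h},\hyp{k}]$ comes from a different $f$-image, your count is stuck. (The initial step of replacing $\hyp{h},\hyp{k}$ by inseparable representatives is also a misstep: the lemma asks you to identify the given $\hs{h}$ and $\hs{k}$, and replacing them does not obviously achieve that; moreover once you have replaced them your ``minimal pair'' search is over the empty set.)

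The paper's proof sidesteps all of this by choosing a different, genuinely monotone quantity: let $\Hyp{I}$ be the set of \emph{all} hyperplanes between $\hyp{h}$ and $\hyp{k}$ (including the endpoints, and with no restriction on their $f$-image), and induct on the number of unordered pairs in $\Hyp{I}$ with the same image under the current resolution. By Remark~\ref{rmk: an elementary fold exists} a minimal $\sim_f$-pair inside $\Hyp{I}$ is automatically elementary foldable (because anything separating it also lies in $\Hyp{I}$), and \Obs~\ref{unfolding2} shows that every pair in $\Hyp{I}_1$ with the same $f_1$-image lifts to a pair in $\Hyp{I}$ with the same $f$-image; since the folded pair disappears, the count strictly decreases. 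Your concerns about facing triples (Lemma~\ref{thirdguy}) and transversality (\Obs~\ref{no more crossing}) are not needed for this argument -- the bookkeeping is handled entirely by \Obs~\ref{unfolding2} once one counts pairs over \emph{all} hyperplanes between $\hyp{h}$ and $\hyp{k}$ rather than only preimages of $f(\hyp{h})$.
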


\begin{proof}
Let $\Hyp{I}$ be the collection of hyperplanes between $\hyp{h}$ from $\hyp{k}$ including $\hyp{h}$ and $\hyp{k}$. We prove the proposition by induction on the number of pairs in $\Hyp{I}$ that have the same image under $f$. 
If $\hyp{h}$ and $\hyp{k}$ is the only such pair, then they are elementary foldable, and we get $[\hyp{h}]=[\hyp{k}]$ after one fold.
Otherwise, there exists some pair of hyperplanes in $\Hyp{I}$ that are elementary foldable.
Let $\Hs{H}/\hspace{-.15cm}\sim$ be the folded pocset, and let $f_1: \Hs{H}/\hspace{-.15cm}\sim \to \Hs{H}'$ be the new resolution and $\Hyp{I}_1$ be the collections of hyperplanes between  $[\hyp{h}]$ from $[\hyp{k}]$. 
By Lemma \ref{unfolding2} the image $\Hyp{I}$ has strictly less pairs that have the same image under $f_1$. 
Thus they can be identified after finitely many folds by the induction hypothesis.
\end{proof}

\begin{proof}[Proof of Proposition \ref{finite_folding}]
We first show that by a finite folding sequence one can get $\Hs{H}_i$ such that $\Hs{H}_i/G\to\Hs{H}'/G$ is injective. If this map is not injective, choose two hyperplanes $\hyp{h},\hyp{k}$ such that $f(\hyp{h})=f(\hyp{k})$ but $\hyp{h}$ and $\hyp{k}$ belong to different $G$-orbits. 
By  Lemma \ref{folding two guys}, we can find a sequence of folds that identifies $\hs{h}$ and $\hs{k}$, and thus reduces the number of $G$-orbits of hyperplanes.

Now, we may assume that the map $f/G : \Hs{H}/G\to\Hs{H}'/G$ is injective. For each $G$-orbit of a hyperplane $G.\hyp{h}'$ in its image we fix a hyperplane $\hyp{h}'$ in $\Hs{H}'$ that belongs to this orbit and a finite set $S_{\hyp{h}'}$ of generators of the hyperplane stabilizer $\Stab_G(\hyp{h}')$. We choose some representative $\hyp{h}\in\Hs{H}$ that is mapped by $f$ to $\hyp{h}'$. Let $c_{\hyp{h}'}$ be the number of generators in $S_{\hyp{h}'}$  that do not belong to $Stab_G(\hyp{h})$. Let us define the \emph{complexity} of $f$ to be $$c_f = \sum_{\hyp{h}'\in\Hs{H}'/G} c_{\hyp{h}'}.$$

We prove that if $c_f >0$ then by a finite folding sequence one can reduce $c_f$. Let $\hyp{h}'$ be such that $c_{\hyp{h}'}>0$, and let $S_{\hyp{h}'}$ and $\hyp{h}$ be as in the definition of $c_{\hyp{h}'}$, and let $s\in S_{\hyp{h}'}$ be a generator of $\Stab_G (\hyp{h}')$ that does not belong to $\Stab_G (\hyp{h})$. Note that $f(\hyp{h})=f(s\hyp{h})$ thus by 
Lemma \ref{folding two guys}, we can perform finitely many elementary folds until $\hyp{h}$ and $s\hyp{h}$ are identified. The stabilizer of the resulting hyperplane contains $s$ thus reduces the complexity by at least 1.

We complete the proof by observing that if $f/G$ is injective and $c_f=0$ then $f$ is an embedding of pocsets. 
\end{proof}

Finally, we show that a cobounded action remains cobounded after folding. Note that this is immediate for classical $G$-tree folds, however this becomes less clear in our setting since the dimension can increase. 

\begin{lemma} \label{cobounded}
Let $\gp{G}$ be a group acting coboundedly without inversion on a \CCC $\CC{X}$. Let $\CC{X}'$ be a \CCC obtained by a $\gp{G}$-equivariant elementary fold $\phi$ of $\Hs{H}(\CC{X})$ and assume that $\CC{X}'$ has finite dimension. Then the action of $\gp{G}$ on $\CC{X}'$ is cobounded.
\end{lemma}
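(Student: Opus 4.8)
The plan is to realise the fold $\phi$ as an honest map of cube complexes and then to show that its image is coarsely dense, with a density constant depending only on $\dim\CC{X}'$. Since an elementary fold is the quotient by an admissible equivalence relation (Lemma~\ref{lem: folds are AER}), $\phi$ is a resolution of pocsets whose image is all of $\Hs{H}(\CC{X}')$; hence Lemma~\ref{AMtoCCCmap} produces a canonical $G$-equivariant combinatorial map $F\colon\CC{X}\to\CC{X}'$ (here the second factor $\CC{Z}_2$ of that lemma is trivial), and $F$ is in particular $L^1$-distance-non-increasing on the $0$-skeleton. Fix a vertex $\CCv{x}_0\in\CC{X}$; by coboundedness there is $R$ so that every vertex of $\CC{X}$ is within distance $R$ of $G\CCv{x}_0$. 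So if we prove that $F(\CC{X}^{(0)})$ is $D$-dense in $\CC{X}'^{(0)}$ for some $D$ independent of the chosen vertex, then for any $\CCv{y}'\in\CC{X}'^{(0)}$ we may pick $\CCv{x}\in\CC{X}^{(0)}$ with $d(F(\CCv{x}),\CCv{y}')\le D$ and $g\in G$ with $d(\CCv{x},g\CCv{x}_0)\le R$, whence $d(\CCv{y}',g\,F(\CCv{x}_0))\le D+R$; this says $G$ acts coboundedly on $\CC{X}'$. Thus everything reduces to the coarse density of $F(\CC{X}^{(0)})$.

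For the coarse density it suffices, given $\CCv{y}'\in\CC{X}'^{(0)}$, to exhibit a vertex $\CCv{x}\in\CC{X}^{(0)}$ with $F(\CCv{x})$ and $\CCv{y}'$ in a common cube of $\CC{X}'$: two such vertices differ only across a pairwise transverse family of hyperplanes, so $d(F(\CCv{x}),\CCv{y}')\le\dim\CC{X}'$, and we may take $D=\dim\CC{X}'$. To decide when $F(\CCv{x})$ agrees with $\CCv{y}'$ on a hyperplane $[\hyp h]$ of $\CC{X}'$, combine two facts: the explicit description in Lemma~\ref{AMtoCCCmap}, that $F(\CCv{x})([\hyp h])=\phi(\CCv{x}(\hyp g))$ for a $\CCv{x}$-minimal representative $\hyp g$ of the class $[\hyp h]=\phi^{-1}(\hyp h)$; and Lemma~\ref{orientation}, that each class carries a canonical orientation $\mathfrak f_{[\hyp h]}$ whose representatives are pairwise facing, hence pairwise incomparable and, in the opposite orientation, pairwise incompatible. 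From these one reads off that for a consistent ultrafilter $\CCv{x}$ the number of representatives of $[\hyp h]$ that $\CCv{x}$ orients against $\mathfrak f_{[\hyp h]}$ is always $0$ or $1$, and that $F(\CCv{x})$ agrees with $\CCv{y}'$ on $[\hyp h]$ exactly when this number is $0$ (if $\CCv{y}'$ orients $[\hyp h]$ by $\mathfrak f_{[\hyp h]}$) or $1$ (if $\CCv{y}'$ orients it the other way).

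Hence constructing the desired $\CCv{x}$ amounts to choosing a consistent DCC ultrafilter on $\Hs{H}(\CC{X})$ that orients the "correct number" of representatives against $\mathfrak f$ in all but at most $\dim\CC{X}'$ classes. One starts from the all-facing choice and, in each class that $\CCv{y}'$ orients against $\mathfrak f$, flips one well-chosen representative; the failures of consistency of such a choice are governed by \Obs~\ref{unfolding1} and Lemma~\ref{thirdguy}, in that every failure is witnessed by a \emph{folded straddle} — a $G$-translate $g\hyp h_1\sim g\hyp h_2$ of the foldable pair defining $\phi$ with a representative of another class trapped between them — and such a straddle creates a crossing in $\CC{X}'$ (converse part of \Obs~\ref{unfolding1}). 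The remaining point is that the straddles one cannot repair simultaneously yield pairwise transverse hyperplanes of $\CC{X}'$, so, $\CC{X}'$ being finite dimensional, there are at most $\dim\CC{X}'$ of them; absorbing exactly those into the choice of $\CCv{x}$ leaves $F(\CCv{x})$ and $\CCv{y}'$ differing only across a cube, as required.

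The main obstacle is exactly this last step. For trees, folds are surjective on vertices, but here $\phi$ is not: the naive pull-back of $\CCv{y}'$ (orient every hyperplane of $\CC{X}$ toward $\CCv{y}'$) is in general not a consistent ultrafilter, and — as already in the fold producing a fan of squares, where the central vertex is missed — it can fail consistency at infinitely many places, so the repair cannot be made locally. The crux is therefore the global bookkeeping that, after an optimal choice of lift, the unavoidable disagreements between $F(\CCv{x})$ and $\CCv{y}'$ are confined to a single cube of $\CC{X}'$; this is where finite-dimensionality of $\CC{X}'$ is used essentially, via Lemmas~\ref{preimage of crossing hyps}, \ref{unfolding1} and \ref{thirdguy} controlling the crossings created by the fold.
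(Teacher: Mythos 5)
Your reduction is sound and matches the paper's strategy up to a point: both use Lemma~\ref{AMtoCCCmap} to get the $G$-equivariant, distance-non-increasing map $F\colon\CC{X}\to\CC{X}'$, and both aim to show $F(\CC{X}^{(0)})$ is $\dim\CC{X}'$-dense. Your observations about the canonical facing orientation of each $\phi$-class (Lemma~\ref{orientation}), that a consistent ultrafilter can orient at most one representative of a class against that direction, and how this governs what $F(\CCv{x})$ sees on each hyperplane of $\CC{X}'$, are also correct.

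The genuine gap is the construction of the DCC ultrafilter $\CCv{x}$, and you concede as much in your last paragraph. Your plan — fix $\CCv{y}'$, start from the all-facing choice, flip one well-chosen representative per class that $\CCv{y}'$ orients against the facing direction, and ``absorb'' the unrepairable conflicts — is not actually an argument: nothing tells you which representative to flip, whether the flips can be chosen compatibly across classes, whether a choice that resolves one straddle cannot create arbitrarily many new ones, or why the set of classes where you simply give up is pairwise transverse in $\CC{X}'$ rather than, say, nested. The claim ``the straddles one cannot repair simultaneously yield pairwise transverse hyperplanes'' is the entire content of the lemma, and it is asserted, not proved. You also need the resulting choice to satisfy DCC, which is nowhere addressed.

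The paper sidesteps this entirely by not targeting a fixed vertex $\CCv{y}'$. It fixes a \emph{maximal} cube $\CCc{C}'$ and builds $U$ so that $F$ of its vertex lands anywhere in $\CCc{C}'$. Concretely: for $\hyp{h}$ whose $\phi$-class crosses $\CCc{C}'$ and is folded, orient $\hyp{h}$ in the canonical facing direction; for $\hyp{h}$ whose class crosses $\CCc{C}'$ and is not folded, orient arbitrarily; for $\hyp{h}$ whose class does not cross $\CCc{C}'$, use maximality to find some $\hyp{k}'\in\Hyp{C}'$ disjoint from $\phi(\hyp{h})$ and orient $\hyp{h}$ toward all preimages of $\hyp{k}'$ (the converse of \Obs~\ref{unfolding1} guarantees such an orientation exists). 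Because every non-crossing hyperplane is oriented toward the preimage of the whole cube, consistency and DCC reduce to checking compatibility among the crossing classes only, which \Obs~\ref{unfolding1} handles directly — no global bookkeeping of ``repairs'' is needed, and the ``differs only across a cube'' conclusion comes for free since $U$ maps into $\CCc{C}'$. If you want to rescue your approach, the cleanest fix is to replace your fixed $\CCv{y}'$ by a maximal cube containing it and adopt this orientation scheme.
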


\begin{proof}
Let $F:  \CC{X} \rightarrow \CC{X}'$ be the map between \CCC associated to $\phi$. From Lemma \ref{AMtoCCCmap}, the map $F$ is distance non-increasing. 
It is thus sufficient to prove that $\CC{X}'$ is at bounded distance from the image of $F$. 
We will show that any maximal cube in $\CC{X}'$ contains at least a vertex in the image, which would imply that that the $L_1$ distance between $\CC{X}'$ and $F(\CC{X})$ is bounded by the dimension of $\CC{X}'$.

For a maximal cube $\CCc{C}'$ in $\CC{X}'$, we associate a DCC ultrafilter $U$ on $\Hs{H}$ whose corresponding vertex $\CCv{x}\in\CC{X}$ satisfies  $F(\CCv{x})\in\CCc{C}'$.
Let $\Hyp{C}'$ be the set of hyperplanes that cross $\CCc{C}'$.
We partition the set of hyperplanes in $\Hyp{C}'$ into two disjoint subsets: the set $\Hyp{C}'_{NF}$ of non-folded hyperplanes in $\Hyp{C}'$ (i.e, hyperplanes that have one preimage under $\phi$), and the set $\Hyp{C}'_{F}$ of folded hyperplanes in $\Hyp{C}'$. 
Let $\Hyp{C}, \Hyp{C}_{NF}, \Hyp{C}_{F}$ be the corresponding sets of preimages.

We define the ultrafilter $U$ according to the following cases:

\begin{itemize}
	\item For $\hyp h$ in $\Hyp{C}_{NF}$, we set $U(\hyp{h})$ to be an arbitrary choice of orientation of $\hyp h$.
	\item For each $\hyp h$ in $\Hyp{C}_{F}$, by lemma \ref{orientation}, there is an orientation for the hyperplanes in  $\phi^{-1}(\phi(\hyp{h}))$ for which they are facing. We set $U(\hyp{h})$ to be this orientation.
	\item For each $\hyp{h}$ in $\Hyp{H}\setminus\Hyp{C}$, since  $\CCc{C}'$ is maximal, $\phi(\hyp{h})$ is disjoint from some hyperplane $\hyp{k}'\in\Hyp{C}'$. We choose $U(\hyp{h})$ to be the orientation $\hs{h}$ of $\hyp{h}$ that contains the preimages $\phi^{-1}(\hyp{k}')$. Such an orientation exist since otherwise the hyperplane $\hyp{h}$ would be transverse to a preimage of $\hyp{k}'$ or separate two preimages of $\hyp{k}'$. But then, by the converse implication of \Obs \ref{unfolding1}, $\phi(\hyp{h})$ and $\hyp{k}'$ would be transverse, contradicting the hypothesis.
\end{itemize}



Let us first check that $U$ is an ultrafilter of $\CC{X}$. Since any hyperplane in $\Hyp{H}\setminus\Hyp{C}$ is oriented such that it contains all preimages of a hyperplane in $\Hyp{C}'$, it is sufficient to check that the orientations on hyperplanes in $\Hyp{C}$ are compatible. 
Now take $\hyp h'$ and $\hyp{k'}$  in $\Hyp{C}'$. 
By assumption, the hyperplanes $\hyp h'$ and $\hyp k'$ are transverse. 
By \Obs \ref{unfolding1}, either there exists $\hyp k$ in the preimage of $\hyp k'$ that is transverse to a preimage  $\hyp h$ of $\hyp h'$, in which case, since all orientations of $\phi^{-1}(\hyp h')\cup \phi^{-1}(\hyp k')$ contain $\hyp k \cap \hyp h$, any pair of preimages is oriented in a compatible way, or, up to interchanging $\hyp h'$ and $\hyp k'$, there are two hyperplanes $\hyp k_1$ and $\hyp k_2$ in $\phi^{-1}(\hyp k')$ that are separated by a hyperplane $\hyp h\in \phi^{-1}(\hyp h')$. So $U(\hyp k_1)$ and $U(\hyp k_2)$ are facing and contain $\hyp h$. 
By construction any hyperplane in $\phi^{-1}(\hyp h')$ is oriented to contain $\hyp h$, and hyperplanes in $\phi^{-1}(\hyp k')$ are oriented such that they are pairwise facing, in particular they contain $\hyp k_1$ and $\hyp k_2$, and thus also $\hyp h$. This shows that any two hyperplanes in $\phi^{-1}(\hyp h')\cup \phi^{-1}(\hyp k')$ have compatible orientations in $U$.

Moreover $U$ satisfies the DCC condition. Indeed, assume by contradiction that there is an infinite descending chain $\left(\hs{h}_n\right)$. By construction, each $\hs{h}_n$ is oriented towards all preimages of an element of $\Hyp{C}'$. As there are finitely many elements of $\Hyp{C}'$, there exist a preimage of an element of $\Hyp{C}'$ that is contained in every halfspace $\hs{h}_n$ (we have $\hs{h}_n \subset \hs{h}_{n-1}$), contradicting the local finiteness given by Lemma \ref{lem: quotient by AER gives pocset}.

Let $\CCv{x}$ be the vertex corresponding to the ultrafilter $U$. Let us prove now that $F(\CCv{x})$ belongs to $\CCc{C}'$. We have to show that any hyperplane $\hyp h'$ not in $\CCc{C}'$ is oriented toward a hyperplane of $\CCc{C}'$ in the ultrafilter defined by $F(\CCv{x})$. Take the hyperplane $\hyp h$ in $\phi^{-1}(\hyp h')$ such that $\CCv{x}(\hyp h)$ is minimal in $\CCv{x}$. By construction there exists  $\hyp k' \in \Hyp{C}'$ such that $U(\hyp h)$ contains all preimages of $\hyp k'$. Hence, $F(\CCv{x})(\hyp h)$ contains  $\hyp k'$.
\end{proof}

\section{Undistorted subgroups}

In this section we prove the two main results regarding undistorted (and quasiconvex) group actions on CAT(0) cube complexes.
We begin this section by recalling the construction of the geometric resolution, and provide the setting for the remainder of the section.

Let $G$ be a finitely presented group, let $\simp{K}$ be a finite two-dimensional simplicial complex such that $G=\pi_1 (\simp{K})$, and let $\usimp{K}$ be its universal cover. 
Let $G$ act on a finite dimensional CAT(0) cube complex $\CC{X}'$. Without loss of generality we may assume that $G$ acts without inversions.
Since $G\actson \usimp{K}^{(0)}$ freely, there exists a $G$-equivariant map $\usimp{K}^{(0)}\to \CC{X}'^{(0)}$.
Extending this map $G$-equivariantly by mapping edges to combinatorial geodesics and simplices to area minimizing discs, we get a $G$-equivariant map $\usimp{K}\to\CC{X}'$. 
The connected components of preimages of the hyperplanes of $\CC{X}'$ are embedded graphs in $\usimp{K}$, which we call \emph{tracks}.
Each track separates $\usimp{K}$ into two connected components. 
Therefore, the collection of all tracks defines a natural pocset structure (associated to a system of walls) which we denote by $\Hs{H}$.
In this pocset structure each hyperplane $\hyp{h}\in\Hyp{H}$ is associated to a track which we denote by $\trk{t}_{\hyp{h}}$.
Moreover there is a natural resolution of pocsets $\Hs{H}\to\Hs{H}'$, where $\Hs{H}'$ is the pocset of halfspaces of $\CC{X}'$.

The collection of tracks is invariant under the action of $G$, and thus, descends to a collection of immersed graphs in $\simp{K}$. 
Since $\simp{K}$ is finite, this collection is finite. This shows that $G$ has finitely many orbits of hyperplanes in $\Hyp{H}$. Thus, if we further assume that for all $\hyp{h}'\in\Hyp{H}'$ the stabilizer $\Stab_G(\hyp{h}')$ is finitely generated then by Proposition \ref{finite_folding} there is a finite folding sequence
\[ \Hs{H}=\Hs{H}_0 \leadsto \Hs{H}_1 \leadsto \ldots \leadsto \Hs{H}_n=\Hs{H}/\hspace{-.15cm}\sim_f\embedsin\Hs{H}'\]
factoring $\Hs{H}\to\Hs{H}'$.
Let us denote by 
\[ \CC{X}=\CC{X}_0 \leadsto \CC{X}_1 \leadsto \ldots \leadsto \CC{X}_n=\CC{X}/\hspace{-.15cm}\sim_f\embedsin\CC{X}'\] 
the corresponding sequence of CAT(0) cube complexes.

Moreover, since $\simp{K}$ is finite each track has a finite (immersed) image in $\simp{K}$. Thus, it is easy to see that $\trk{t}_{\hyp{h}}$ is $\Stab_G(\hyp{h})$-invariant (in fact, $\Stab_G(\trk{t}_{\hyp{h}})=\Stab_G(\hyp{h})$) and that $\Stab_G(\hyp{h})$ acts cocompactly on $\trk{t}_{\hyp{h}}$. 
However, it might not act cocompactly on $\hyp{h}$.
In the following claim we generalize these properties of tracks to hyperplanes in $\Hyp{H}_i$.
To do that we extend tracks to immersed graphs (which are not necessarily tracks) which we call ``saturated tracks''.

\begin{claim}\label{saturated tracks}
For each $1\le i\le n$ there is a graph $T_i$ with an action of $G$ and an immersion $T_i\immersedin \usimp{K}$ such that the following holds. 
\begin{enumerate}
	\item \label{strk contain tracks} The graph  $T_0$ is the disjoint union of the tracks of the geometric resolution, that is, $T_0=\coprod_{\hyp{h}\in\Hyp{H}}\trk{t}_{\hyp{h}}\embeddedin \usimp{K}$ where $\trk{t}_{\hyp{h}}\embeddedin\usimp{K}$ is the track associated to $\hyp{h}\in\Hyp{H}=\Hyp{H}_0$.
	\item \label{strk are increasing} For all $1\le i\le n$, $T_{i-1}\subseteq T_{i}$ and both the $G$-action and the immersion of $T_i$ extend that of $T_{i-1}$.
	\item \label{strk are G cocompact and equivariant} $G$ acts cocompactly on $T_i$ and the immersion $T_i\immersedin \usimp{K}$ is $G$-equivariant.
	\item \label{strk are connected components} There is a $G$-equivariant bijection $\Hs{H}_i\ni\hyp{h}^i \mapsto \trk{t}_{\hyp{h}^i}\in\pi_0(T_i)$ from $\Hyp{H}_i$ to $\pi_0(T_i)=$ the connected components of $T_i$.  We call $\trk{t}_{\hyp{h}^i}$ \emph{the $i$-saturated track} of $\hyp{h}^i$. In particular, we have $\Stab_G (\trk{t}_{\hyp{h}^{i}}) = \Stab_G(\hyp{h}^{i})$
	\item \label{strk are stab cocompact} For all $\hyp{h}^i\in\Hyp{H}_i$ the stabilizer $\Stab_G(\hyp{h}^{i})$ acts cocompactly on $\trk{t}_{\hyp{h}^{i}}$.
\end{enumerate} 
\end{claim}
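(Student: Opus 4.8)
The plan is to construct the graphs $T_i$ inductively, starting from $T_0=\coprod_{\hyp{h}\in\Hyp{H}}\trk{t}_{\hyp h}$, which satisfies all five conditions by the discussion preceding the claim (tracks are embedded, $G$-cocompact, in $G$-equivariant bijection with $\Hyp{H}$, and each is $\Stab_G(\hyp h)$-cocompact). The inductive step passes from $T_{i-1}$ (adapted to the fold $\phi_{i-1}:\Hs{H}_{i-1}\leadsto\Hs{H}_i$) to $T_i$. First I would fix the \elemfoldable pair $\hs{h}_1\sim\hs{h}_2$ generating the fold, so that by Lemma \ref{orientation} each $\sim$-class is a facing collection of hyperplanes obtained as a $G$-orbit of the pair $(\hyp h_1,\hyp h_2)$. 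The idea is that the $i$-saturated track of a class $[\hyp h^{i-1}]$ should be the union of the $(i-1)$-saturated tracks of all hyperplanes in that class, glued together. So I would set $T_i$ to be the quotient of $T_{i-1}$ (or rather a subdivision/pushout of it) in which, for each generating identification $g\cdot\hyp h_1\sim g\cdot\hyp h_2$, the two components $\trk{t}_{g\cdot\hyp h_1^{i-1}}$ and $\trk{t}_{g\cdot\hyp h_2^{i-1}}$ of $T_{i-1}$ are connected by an edge (or identified along a point), extending the immersion $T_{i-1}\immersedin\usimp K$ by mapping this new edge to a path in $\usimp K$ realizing the fact that the corresponding tracks/hyperplanes were facing and \inseparable; equivariance forces the choice on the whole $G$-orbit, and this is consistent since $G$ acts without inversions.

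With this construction, conditions \ref{strk contain tracks} and \ref{strk are increasing} are immediate. For condition \ref{strk are G cocompact and equivariant}: by Proposition \ref{finite_folding}'s setup $G$ has finitely many orbits of hyperplanes in every $\Hyp{H}_i$, hence finitely many orbits of components of $T_{i-1}$; the new edges form finitely many $G$-orbits (one per generating pair, and there is essentially one orbit of elementary foldable pairs being folded), so $G$-cocompactness of $T_{i-1}$ plus finitely many orbits of added edges gives $G$-cocompactness of $T_i$. For condition \ref{strk are connected components}, the key point is that two components $\trk{t}_{\hyp a^{i-1}}$ and $\trk{t}_{\hyp b^{i-1}}$ of $T_{i-1}$ get merged in $T_i$ precisely when $\hyp a^{i-1}\sim\hyp b^{i-1}$; since $\sim$ is generated by a single orbit of foldable pairs, the connected components of $T_i$ are exactly the unions over $\sim$-classes, so $\pi_0(T_i)\cong\Hyp{H}_{i-1}/{\sim}=\Hyp{H}_i$, $G$-equivariantly, and $\Stab_G(\trk{t}_{\hyp h^i})=\Stab_G(\hyp h^i)$ follows because the stabilizer of a $\sim$-class equals the stabilizer of the corresponding folded hyperplane (this is how the $G$-action on $\Hs{H}_i$ is defined).

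The main obstacle is condition \ref{strk are stab cocompact}: showing that $\Stab_G(\hyp h^i)$ acts \emph{cocompactly} on the $i$-saturated track $\trk{t}_{\hyp h^i}$. The component $\trk{t}_{\hyp h^i}$ is assembled from the $(i-1)$-saturated tracks of all the hyperplanes in the class $[\hyp h^{i-1}]$, and there may be infinitely many of them (as in Example \ref{example of folding sequence}, where second-fold classes are infinite facing collections). So I need that $\Stab_G(\hyp h^i)$ acts with finitely many orbits on the set of these components and on the connecting edges. Here I would use exactly the argument underlying the complexity reduction in Proposition \ref{finite_folding}: the folding steps are chosen so that a generator $s$ of $\Stab_G(f(\hyp h'))=\Stab_G(\hyp h^n)$ not yet stabilizing the current lift is realized by identifying $\hyp h$ with $s\hyp h$; concretely, the orbit $\Stab_G(\hyp h^i)\cdot\{\text{components of }T_{i-1}\text{ in the class}\}$ is controlled because $\Stab_G(\hyp h^{i-1})\backslash\Stab_G(\hyp h^i)$ indexes the lifts in the class and $\Stab_G(\hyp h^{i-1})$ acts cocompactly on $\trk{t}_{\hyp h^{i-1}}$ by the inductive hypothesis \ref{strk are stab cocompact}. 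Combining: $\Stab_G(\hyp h^i)$ acts on $\trk{t}_{\hyp h^i}$ with a fundamental domain built from finitely many $\Stab_G(\hyp h^{i-1})$-fundamental domains of the constituent $(i-1)$-saturated tracks together with finitely many connecting edges, giving cocompactness. I would need to check carefully that ``finitely many lifts up to $\Stab_G(\hyp h^i)$'' really holds at each intermediate stage and not only at the end — this is the delicate bookkeeping point, and it rests on the fact that each elementary fold in our chosen sequence either reduces the number of $G$-orbits of hyperplanes or increases a hyperplane stabilizer towards a fixed finite generating set, so the saturated tracks never acquire infinitely many $\Stab$-orbits of constituents.
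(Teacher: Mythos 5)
Your construction of the $T_i$ — taking $T_0$ to be the disjoint union of tracks and passing from $T_{i-1}$ to $T_i$ by $G$-equivariantly adding one orbit of edges connecting the two $(i-1)$-saturated tracks of the elementary foldable pair — is exactly the paper's construction, and your handling of properties~\ref{strk contain tracks}--\ref{strk are connected components} matches the paper's.

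The gap is in property~\ref{strk are stab cocompact}, which you correctly identify as the delicate point but then try to resolve by the wrong means. You propose to control the number of $\Stab_G(\hyp{h}^i)$-orbits of constituent $(i-1)$-saturated tracks by appealing to the complexity-reducing bookkeeping from the proof of Proposition~\ref{finite_folding} (each fold either reduces $G$-orbits of hyperplanes or enlarges a stabilizer towards a fixed generating set), and you flag the bookkeeping as uncertain. This route is both unnecessary and problematic: Claim~\ref{saturated tracks} must hold for \emph{every} elementary folding sequence, not just for a carefully chosen one, and a single elementary fold need not make the kind of clean progress you are invoking; the coset-counting argument you sketch does not obviously close. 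The paper instead observes that property~\ref{strk are stab cocompact} is an immediate formal consequence of property~\ref{strk are G cocompact and equivariant}, because the $i$-saturated tracks are \emph{connected components} of $T_i$: if $g\in G$ satisfies $g\,\trk{t}_{\hyp{h}^i}\cap\trk{t}_{\hyp{h}^i}\neq\emptyset$, then since distinct components are disjoint we must have $g\,\trk{t}_{\hyp{h}^i}=\trk{t}_{\hyp{h}^i}$, i.e.\ $g\in\Stab_G(\trk{t}_{\hyp{h}^i})=\Stab_G(\hyp{h}^i)$. Consequently, cells of $\trk{t}_{\hyp{h}^i}$ in the same $G$-orbit are in fact in the same $\Stab_G(\hyp{h}^i)$-orbit, so the finiteness of $T_i/G$ directly yields finiteness of $\trk{t}_{\hyp{h}^i}/\Stab_G(\hyp{h}^i)$. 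This one-line reduction replaces your entire last paragraph, and in particular the ``finitely many $\Stab$-orbits of constituents'' you were worried about is a \emph{consequence} of cocompactness of $T_i$, not something you need to establish independently via complexity.

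Replacing your argument for~\ref{strk are stab cocompact} with this observation closes the gap; the rest of your proposal is sound.
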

%

\begin{proof}
	We build $T_i$ by induction on $i$. 
	As we have said, for $i=0$ the collection $T_0=\coprod_{\hyp{h}\in\Hyp{H}}\trk{t}_{\hyp{h}}$ satisfies the properties.
	
	Assume $T_i$ has been defined, and let $\hs{h}^{i}_1\sim\hs{h}^{i}_2$ be the elementary foldable pair of halfspaces in $\Hs{H}_i$ that generates the fold $\Hs{H}_i\leadsto \Hs{H}_{i+1}$.
	Let $\trk{t}_{\hyp{h}^{i}_1},\trk{t}_{\hyp{h}^{i}_2}\subseteq T_i$ be the associated $i$-saturated tracks in $T_i$
	and let $\alpha$ be an immersed arc in $\usimp{K}$ connecting a point of $\trk{t}_{\hyp{h}^{i}_1}$ and a point of $\trk{t}_{\hyp{h}^{i}_2}$ in $\usimp{K}$. 
	Let $T_{i+1}$ be obtained from $T_i$ by $G$-equivariantly adding an edge $\tilde{\alpha}$ between the endpoints of $\alpha$ in $\trk{t}_{\hyp{h}^{i}_1}$ and $\trk{t}_{\hyp{h}^{i}_2}$. Let $T_{i+1}\immersedin \usimp{K}$ extend the map $T_i\immersedin \usimp{K}$ $G$-equivariantly such that the edge $\tilde{\alpha}$ is mapped to $\alpha$. By construction $T_i\subseteq T_{i+1}$ comes with a cocompact $G$-action, and an immersion $T_{i+1}\immersedin\simp{K}$ which is $G$-equivariant and extends $T_i\immersedin \usimp{K}$, proving properties \ref{strk are increasing} and \ref{strk are G cocompact and equivariant}.
		
	The equivalence relation on hyperplanes that produces the quotient $\Hyp{H}_i\leadsto \Hyp{H}_{i+1}$ is generated by $G$-equivariantly identifying the two hyperplanes $\hyp{h}^{i}_1\sim\hyp{h}^{i}_2$. Similarly, the equivalence relation on $i$-saturated tracks (i.e, components of $T_i$) in which two $i$-saturated tracks are equivalent if they belong to the same component of $T_{i+1}$ is generated by $G$-equivariantly identifying $\trk{t}_{\hyp{h}^{i}_1}$ and $\trk{t}_{\hyp{h}^{i}_2}$ (as this is exactly the effect of connecting them with the edge $\tilde{\alpha}$).
	Thus the following is a well defined bijection. 
	The $(i+1)$-saturated track $\trk{t}_{\hyp{h}^{i+1}}$ associated to a hyperplane $\hyp{h}^{i+1}\in\Hyp{H}_{i+1}$ is the connected component of $T_{i+1}$ that contains the $i$-saturated track $\trk{t}_{\hyp{h}^i}$ of a preimage $\hyp{h}^{i}$ of $\hyp{h}^{i+1}$ under the map $\Hyp{H}_i \leadsto \Hyp{H}_{i+1}$.
	Clearly, this bijection is $G$-equivariant which proves property \ref{strk are connected components}.
	
	Finally, notice that since the saturated tracks are connected components, if $g\in G$ is such that $g\trk{t}_{\hyp{h}^i} \cap \trk{t}_{\hyp{h}^i} \ne \emptyset$ then $g\in \Stab_G(\trk{t}_{\hyp{h}^i}) = \Stab_G (\hyp{h}^i)$. Therefore, property \ref{strk are stab cocompact} follows from property \ref{strk are G cocompact and equivariant}.
\end{proof}

\begin{claim}\label{intersecting strk}
	If $\hyp{h}^{i}\pitchfork\hyp{k}^{i}\in\Hyp{H}_i$ then 
	the images of the immersions of $\trk{t}_{\hyp{h}^{i}}$ and $\trk{t}_{\hyp{k}^{i}}$ intersect in $\usimp{K}$.
\end{claim}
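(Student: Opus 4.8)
The plan is to prove the statement by induction on $i$, reducing everything back to the honest tracks of the geometric resolution. The base case $i=0$ is immediate from the construction: by property~\ref{strk contain tracks} of Claim~\ref{saturated tracks} the $0$-saturated track of $\hyp{h}\in\Hs{H}_0$ is the resolution track $\trk{t}_{\hyp{h}}$, and by construction of the wall pocset $\Hs{H}=\Hs{H}_0$ two of its hyperplanes are transverse precisely when the corresponding tracks cross in $\usimp{K}$. For later use I record two further elementary consequences of this construction: disjoint hyperplanes of $\Hs{H}_0$ have disjoint tracks in $\usimp{K}$, and if $\hyp{b}$ separates $\hyp{a}_1$ from $\hyp{a}_2$ in $\Hs{H}_0$ then $\trk{t}_{\hyp{a}_1}$ and $\trk{t}_{\hyp{a}_2}$ lie in the two distinct connected components of $\usimp{K}\setminus\trk{t}_{\hyp{b}}$ (recall each track separates $\usimp{K}$).

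Before the inductive step, note that by property~\ref{strk are connected components} of Claim~\ref{saturated tracks} together with $T_0\subseteq T_1\subseteq\cdots$ (property~\ref{strk are increasing}), if $\hyp{h}^j\in\Hs{H}_j$ maps to $\hyp{h}^{j'}\in\Hs{H}_{j'}$ under the composition of folds for some $j\le j'$, then $\trk{t}_{\hyp{h}^j}\subseteq\trk{t}_{\hyp{h}^{j'}}$ as subgraphs of $T_{j'}$, so the image of $\trk{t}_{\hyp{h}^j}$ in $\usimp{K}$ is contained in that of $\trk{t}_{\hyp{h}^{j'}}$. Now assume the statement for $\Hs{H}_i$ and let $\hyp{h}^{i+1}\pitchfork\hyp{k}^{i+1}$ in $\Hs{H}_{i+1}=\Hs{H}_i\modsim$, where $\sim$ is the elementary fold producing $\Hs{H}_{i+1}$; by Lemma~\ref{lem: folds are AER} it is an admissible equivalence relation, so Lemma~\ref{preimage of crossing hyps} applies. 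In its first case some preimages $\hyp{h}^i,\hyp{k}^i\in\Hs{H}_i$ of $\hyp{h}^{i+1},\hyp{k}^{i+1}$ are already transverse in $\Hs{H}_i$; the induction hypothesis then gives that the images of $\trk{t}_{\hyp{h}^i}$ and $\trk{t}_{\hyp{k}^i}$ meet in $\usimp{K}$, and since these are contained in the images of $\trk{t}_{\hyp{h}^{i+1}}$ and $\trk{t}_{\hyp{k}^{i+1}}$ respectively, we are done. In the remaining two (mutually symmetric) cases there are preimages $\hyp{h}^i_1,\hyp{h}^i_2\in\Hs{H}_i$ of $\hyp{h}^{i+1}$ and a preimage $\hyp{k}^i\in\Hs{H}_i$ of $\hyp{k}^{i+1}$ such that $\hyp{k}^i$ separates $\hyp{h}^i_1$ from $\hyp{h}^i_2$ in $\Hs{H}_i$.

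To treat this separating case I would descend through the folds to the level of honest resolution tracks. Relabelling, we have orientations $\hs{a}_1^i<\hs{b}^i<\hs{a}_2^i$ with $\hyp{a}_1^i,\hyp{a}_2^i$ mapping to $\hyp{h}^{i+1}$ and $\hyp{b}^i$ to $\hyp{k}^{i+1}$. Since each fold $\Hs{H}_{j-1}\leadsto\Hs{H}_j$ is an admissible equivalence relation, the ``if moreover'' part of \Obs~\ref{unfolding2} applies: given halfspaces of $\Hs{H}_j$ with $\hs{a}_1^{j}<\hs{b}^{j}<\hs{a}_2^{j}$, pick representatives $\hs{a}_1^{j-1}<\hs{a}_2^{j-1}$ in $\Hs{H}_{j-1}$ of $\hs{a}_1^{j},\hs{a}_2^{j}$ (possible since $\hs{a}_1^j<\hs{a}_2^j$ in the quotient) and obtain a representative $\hs{b}^{j-1}\in\Hs{H}_{j-1}$ of $\hs{b}^{j}$ with $\hs{a}_1^{j-1}<\hs{b}^{j-1}<\hs{a}_2^{j-1}$. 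Iterating from $j=i$ down to $j=0$ yields hyperplanes $\hyp{a}_1,\hyp{a}_2$ in $\Hs{H}_0$, both mapping to $\hyp{h}^{i+1}$, and $\hyp{b}$ in $\Hs{H}_0$, mapping to $\hyp{k}^{i+1}$, with $\hyp{b}$ separating $\hyp{a}_1$ from $\hyp{a}_2$ in $\Hs{H}_0$. By the consequences recorded in the first paragraph, $\trk{t}_{\hyp{a}_1}$ and $\trk{t}_{\hyp{a}_2}$ lie in the two distinct connected components of $\usimp{K}\setminus\trk{t}_{\hyp{b}}$. On the other hand the image of the $(i+1)$-saturated track $\trk{t}_{\hyp{h}^{i+1}}$ is a connected subset of $\usimp{K}$ (it is the continuous image of a connected component of $T_{i+1}$) and it contains both $\trk{t}_{\hyp{a}_1}$ and $\trk{t}_{\hyp{a}_2}$; a connected subset of $\usimp{K}$ meeting both components of $\usimp{K}\setminus\trk{t}_{\hyp{b}}$ must meet $\trk{t}_{\hyp{b}}$. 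Hence the image of $\trk{t}_{\hyp{h}^{i+1}}$ meets $\trk{t}_{\hyp{b}}$, which is contained in the image of $\trk{t}_{\hyp{k}^{i+1}}$; this completes the induction.

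The main difficulty is exactly what forces the detour through $\Hs{H}_0$: a saturated track $\trk{t}_{\hyp{h}^{i+1}}$ need not separate $\usimp{K}$ (a fold may raise the dimension and manufacture new crossings, as in Example~\ref{example of quotient}), so one cannot run a separation argument directly at level $i+1$. The device is to trade pocset-level separation in $\Hs{H}_i$, via \Obs~\ref{unfolding2}, for a genuine separation of $\usimp{K}$ by an honest resolution track, and then use only that the saturated track in question is connected.
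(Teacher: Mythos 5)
Your proof is correct. The conclusion it reaches, and the crucial geometric idea (reduce pocset-level separation to genuine separation of $\usimp{K}$ by an honest level-$0$ track, then use connectedness of the saturated track), match the paper. But the route is noticeably different, and I think the difference is worth pointing out.

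The paper's proof is shorter and non-inductive: it applies Lemma~\ref{preimage of crossing hyps} directly to the full composite map $\Hyp{H}=\Hyp{H}_0\to\Hyp{H}_i$, i.e.\ it treats $\Hyp{H}_i$ as a single quotient of $\Hyp{H}_0$. This immediately extracts the two cases (intersecting preimages at level $0$, or a preimage of one hyperplane separating two preimages of the other at level $0$) in one blow, and the separating case is finished by the same connectedness argument you give. Your proof instead does an induction on $i$ and invokes Lemma~\ref{preimage of crossing hyps} only for a single fold at a time — a setting in which Lemma~\ref{lem: folds are AER} explicitly certifies that the quotient is by an admissible equivalence relation — and then uses the ``if moreover'' clause of \Obs~\ref{unfolding2} to push the separation down one fold at a time to level $0$. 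What your approach buys is that you never need the statement (true, and implicitly used by the paper, but not explicitly established there) that the composite $\Hs{H}_0\to\Hs{H}_i$ of a chain of elementary-fold quotients is again a quotient by a single admissible equivalence relation whose quotient pocset agrees with $\Hs{H}_i$; you pay for this in length. The fact that your induction hypothesis is used only in the transverse case while the separating case goes all the way down to level $0$ is exactly right: as you observe, the separating case \emph{cannot} use the hypothesis, since saturated tracks at positive levels do not separate $\usimp{K}$.

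One small caveat (which applies equally to the paper's wording): the assertion that ``$\trk{t}_{\hyp{a}_1}$ and $\trk{t}_{\hyp{a}_2}$ lie in the two distinct connected components of $\usimp{K}\setminus\trk{t}_{\hyp{b}}$'' is cleanest if one knows $\trk{t}_{\hyp{a}_j}\cap\trk{t}_{\hyp{b}}=\emptyset$. From $\hs{a}_1<\hs{b}<\hs{a}_2$ one gets $\trk{t}_{\hyp{a}_1}\subseteq\hs{b}\cup\trk{t}_{\hyp{b}}$ and $\trk{t}_{\hyp{a}_2}\subseteq\comp{\hs{b}}\cup\trk{t}_{\hyp{b}}$, so in principle a level-$0$ track could touch $\trk{t}_{\hyp{b}}$; but this is harmless, since if either $\trk{t}_{\hyp{a}_j}$ meets $\trk{t}_{\hyp{b}}$ the two saturated tracks already intersect and you are done immediately. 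It would be worth adding that half-sentence to make the case split airtight.
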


\begin{proof}
%
	If $\hyp{h}^{i}\pitchfork\hyp{k}^{i}\in\Hyp{H}_i$ then from Lemma \ref{preimage of crossing hyps} there are two possible cases for their preimages under the map $\Hyp{H}\to\Hyp{H}_i$. There are intersecting preimages $\hyp{h},\hyp{k}\in \Hyp{H}$ of $\hyp{h}^{i},\hyp{k}^i$ respectively, which implies that $\trk{t}_{\hyp{h}}$ and $\trk{t}_{\hyp{k}}$ intersect, and thus also $\trk{t}_{\hyp{h}^i}$ and $\trk{t}_{\hyp{k}^i}$. 
	Or, up to exchanging $\hyp{h}^i,\hyp{k}^i$ there is a preimage $\hyp{h}$ of $\hyp{h}^i$ that separates two preimages $\hyp{k}_1,\hyp{k}_2$ of $\hyp{k}^i$. 
	In this case, the corresponding track $\trk{t}_{\hyp{h}}$ separates the tracks $\trk{t}_{\hyp{k}_1},\trk{t}_{\hyp{k}_2}$. 
	But since $\trk{t}_{\hyp{h}}\subseteq \trk{t}_{\hyp{h}^i}$ and $\trk{t}_{\hyp{k}_1},\trk{t}_{\hyp{k}_2}\subseteq \trk{t}_{\hyp{k}^i}$, and $\trk{t}_{\hyp{k}^i}$ is connected we see that $\trk{t}_{\hyp{h}^i}$ and $\trk{t}_{\hyp{k}^i}$ intersect.
\end{proof}

\begin{theorem} \label{maintheorem1}
Let $G$ be a finitely presented group that acts properly on a CAT(0) cube complex $\CC{X}'$, with finitely generated hyperplane stabilizers
and such that the action on the geometric resolution is cobounded. Then the orbit maps $G\to \CC{X}'$ are quasi-isometric embeddings.
\end{theorem}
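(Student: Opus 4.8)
The plan is to combine the folding machinery built above with a Milnor--\v{S}varc argument. Recall the geometric resolution set up at the beginning of this section: it produces the pocset $\Hs{H}=\Hs{H}_0$ of tracks in $\usimp{K}$ together with a $G$-equivariant resolution $f\colon\Hs{H}_0\to\Hs{H}'$, where $\Hs{H}'=\Hs{H}(\CC{X}')$. Since $\simp{K}$ is finite, $G$ has only finitely many orbits of hyperplanes in $\Hyp{H}_0$, and by hypothesis $\Stab_G(\hyp{h}')$ is finitely generated for every $\hyp{h}'\in\Hyp{H}'$. Hence Proposition \ref{finite_folding} applies and yields a finite folding sequence
\[\Hs{H}=\Hs{H}_0 \leadsto \Hs{H}_1 \leadsto \ldots \leadsto \Hs{H}_n=\Hs{H}\modsim_f\embedsin\Hs{H}',\]
and, through Lemma \ref{AMtoCCCmap}, a corresponding sequence $\CC{X}=\CC{X}_0 \leadsto \CC{X}_1 \leadsto \ldots \leadsto \CC{X}_n \embedsin \CC{X}'$ of $G$-equivariant combinatorial maps of CAT(0) cube complexes whose last arrow is an $L^1$-embedding $F\colon\CC{X}_n\embedsin\CC{X}'$.

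Next I would propagate coboundedness along this sequence. First one has to know that each $\CC{X}_i$ is finite dimensional: for $\CC{X}_n$ this is immediate since it $L^1$-embeds into the finite dimensional complex $\CC{X}'$, and for the intermediate complexes I would record a short preliminary observation to the effect that an elementary fold of a finite dimensional complex is again finite dimensional (this is false for arbitrary pocset quotients, cf. Example \ref{example of quotient}, but for an elementary fold the control on newly created transverse triples in Lemma \ref{thirdguy} keeps the dimension finite). Granting this, one starts from the hypothesis that $G$ acts coboundedly on the geometric resolution $\CC{X}_0$ and applies Lemma \ref{cobounded} to each elementary fold $\CC{X}_i\leadsto\CC{X}_{i+1}$, $i=0,\dots,n-1$, in turn; this shows that $G$ acts coboundedly on every $\CC{X}_i$, and in particular on $\CC{X}_n$.

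It then remains to deduce the statement from the facts that $G$ acts coboundedly on $\CC{X}_n$ and that $F\colon\CC{X}_n\embedsin\CC{X}'$ is a $G$-equivariant $L^1$-embedding, so that $d_{\CC{X}'}(F(\CCv{x}),F(\CCv{y}))=d_{\CC{X}_n}(\CCv{x},\CCv{y})$ for all vertices. Since $F$ is $G$-equivariant and $1$-Lipschitz and $G$ acts properly on $\CC{X}'$, the action of $G$ on $\CC{X}_n$ is again proper: a bounded set $B\subseteq\CC{X}_n$ with $gB\cap B\neq\emptyset$ forces $gF(B)\cap F(B)\neq\emptyset$ with $F(B)$ bounded in $\CC{X}'$, hence only finitely many such $g$. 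Thus $G$ acts properly and coboundedly by isometries on the geodesic metric space $(\CC{X}_n,d_{\CC{X}_n})$. Fixing a vertex $p\in\CC{X}_n$, a Milnor--\v{S}varc argument---subdivide an $L^1$-geodesic from $gp$ to $hp$ into unit steps, replace each subdivision point by a uniformly close orbit point, and read off a word over the finite set $\{s\in G\mid d_{\CC{X}_n}(p,sp)\le C\}$---shows that the orbit map $g\mapsto gp$ is a quasi-isometry $G\to\CC{X}_n$. Composing with $F$, which is isometric on vertices, the orbit map $g\mapsto gF(p)$ is a quasi-isometric embedding $G\to(\CC{X}',d_{L^1})$; since the $L^1$- and CAT(0)-metrics on the finite dimensional complex $\CC{X}'$ are bi-Lipschitz equivalent, the orbit maps $G\to\CC{X}'$ are quasi-isometric embeddings.

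The bulk of the work is carried by the results invoked---Proposition \ref{finite_folding} for termination of the folding process and Lemma \ref{cobounded} for the persistence of coboundedness under a fold---so in the assembly itself there are only two points that need care. One is confirming that the intermediate complexes $\CC{X}_1,\dots,\CC{X}_{n-1}$ are finite dimensional, which is needed for Lemma \ref{cobounded} to be applicable. The other, and the main conceptual subtlety, is that these intermediate complexes are in general \emph{not} locally finite (compare the fan in Example \ref{example of folding sequence}), so the concluding Milnor--\v{S}varc step must be run in its version for metrically proper cobounded actions on a geodesic space rather than the classical version for actions on a proper geodesic space.
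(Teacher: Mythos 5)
Your proposal is correct and follows essentially the same route as the paper: build the folding sequence via Proposition \ref{finite_folding}, propagate coboundedness with Lemma \ref{cobounded}, deduce properness of $G\actson\CC{X}_n$ from the combinatorial map $\CC{X}_n\embedsin\CC{X}'$, and conclude by Milnor--\v{S}varc composed with the $L^1$-embedding. Your two extra cautions---that the intermediate $\CC{X}_i$ must be seen to be finite dimensional so Lemma \ref{cobounded} applies, and that the Milnor--\v{S}varc step must be run in its cobounded-proper-action-on-a-geodesic-space form since the $\CC{X}_i$ need not be locally finite---are legitimate points the paper leaves implicit; for the first, the cleaner reference is Lemma \ref{no more crossing} rather than Lemma \ref{thirdguy}, since it shows that hyperplanes transverse in any $\CC{X}_i$ have transverse images under $f$ in $\CC{X}'$, whence $\dim\CC{X}_i\le\dim\CC{X}'$ for every $i$.
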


\begin{proof}
	Following the above discussion, we see that there is a folding sequence  \[ \CC{X}=\CC{X}_0 \leadsto \CC{X}_1 \leadsto \ldots \leadsto \CC{X}_n=\CC{X}/\hspace{-.15cm}\sim_f\embedsin\CC{X}'.\]
	By applying inductively Lemma \ref{cobounded}, $G$ acts coboundedly on each of $\CC{X}_i$, and in particular on $\CC{X}_n$. 
	Since $G$ acts properly on $\CC{X}'$ and the map $\CC{X}_n\embedsin \CC{X}'$ is combinatorial, $G$ acts properly on $\CC{X}_n$. 
	The action of $G$ on $\CC{X}_n$ is proper and cobounded, and therefore $\CC{X}_n$ is $G$-equivariantly quasi-isometric to $G$. The $G$-equivariant embedding $\CC{X}_n\embedsin \CC{X}$ completes the proof.
\end{proof}

%
As a Corollary we obtain Theorem \ref{thm: undistorted in 2D}.

\begin{proof}[Proof of Theorem \ref{thm: undistorted in 2D}] Using  theorem \ref{maintheorem1}, we only need to show that in each of these cases, the action on the resolution is cobounded.
	Case \ref{surface_group_case} follows from \cite{Sag95}.
	Case \ref{2_dim_case} follows from Theorem 1.1 of \cite{BeLa15}. 
\end{proof}

The second theorem gives equivalences  between different quasiconvex properties of subgroups and the quasiconvexity of the group. For this we need the notion of \emph{cocompact core}.

\begin{definition}[\condX]
\label{condition X}
A finitely presented group $H$ acting properly on a CAT(0) cube complex $\CC{X'}$ satisfies \condX if $H$ acts cocompactly on a \CCC $\CC{Y}$ such that $\CC{Y} \embedsin \CC{X'}$ $H$-equivariantly. 
\end{definition}

For the remainder of this section we assume that $G$ is Gromov hyperbolic. The following lemma can be viewed as a generalization of the thin triangle condition for higher dimensional simplices.

\begin{definition}
For $R\ge 0$, a collection of subsets $A_1,\ldots,A_n$ is \emph{$R$-coarsely intersecting} if their $R$-neighborhoods have a non-empty common intersection. 
A collection of subsets $A_1,\ldots,A_n$ is \emph{pairwise $R$-coarsely intersecting} if any pair is $R$-coarsely intersecting.
\end{definition}

The following lemma appears as Lemma 7 in \cite{NiRe03} in the case of convex subsets. Even though their proof works in the quasiconvex case, we include a proof for the purpose self-containment.

\begin{lemma} [The Thin Simplex Lemma]
\label{lem: thin simplex}
Let $H$ be a $\delta$-hyperbolic geodesic space, let $A_1,\ldots, A_d$ be $R$-quasiconvex subsets for some $R$, and assume that $A_i$ pairwise $R$-coarsely intersect. Then there exists $R'=R'(\delta, d,R)$ such that $A_1,\ldots, A_d$ $R'$-coarsely intersect.
\end{lemma}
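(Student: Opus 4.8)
The natural approach is induction on $d$, the number of quasiconvex sets. The base cases $d=1,2$ are immediate (for $d=2$ one can take $R'=R$). For the inductive step, suppose the claim holds for $d-1$ sets; I would apply the inductive hypothesis to $A_1,\dots,A_{d-1}$ to obtain a point $p$ whose $R_0$-neighborhood meets all of $A_1,\dots,A_{d-1}$, where $R_0 = R'(\delta,d-1,R)$. The goal is then to produce a single point close to $p$ and close to $A_d$ as well, using only the fact that $A_d$ coarsely meets each $A_i$ individually together with thinness of triangles/quadrilaterals in $H$.

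\textbf{Key steps.} First, fix $i \ne d$, say $i=1$. Since $A_1$ and $A_d$ are $R$-coarsely intersecting, there is a point $q$ with $d(q,A_1)\le R$ and $d(q,A_d)\le R$. Pick $a_1\in A_1$ with $d(q,a_1)\le R$ and $a_1'\in A_1$ with $d(p,a_1')\le R_0$. Now consider a geodesic $\gamma$ from $a_1'$ to $a_1$: it lies in the $R$-neighborhood of $A_1$ by quasiconvexity, so every point on it is within $2R$ of $A_1$ — that is not directly what I want. Instead, the right move is to track $A_d$: consider a geodesic triangle (or a path) connecting $p$, $q$, and a point near $A_d$. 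The cleanest formulation: let $\gamma$ be a geodesic from $p$ to $q$. I claim every point of $\gamma$ that is "deep enough" along it is close to $A_1$ (by quasiconvexity of $A_1$, since $p$ and $q$ are both near $A_1$, all of $\gamma$ is within $R+R_0$ of $A_1$). So the whole geodesic $[p,q]$ is uniformly close to $A_1$. The point $q$ is close to $A_d$. Now I want to slide: I will show the geodesic $[p,q]$ stays close to $A_d$ near its $q$-endpoint and use hyperbolicity to control where it first gets close. This is where I need to be careful — a single geodesic from $p$ to $q$ need not stay close to $A_d$ away from $q$.

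\textbf{The real mechanism.} The honest version uses the full strength of all the coarse intersections simultaneously and the thin-triangle/quadrilateral inequality iteratively. For each $i<d$, fix a point $x_i$ with $d(x_i,A_i)\le R$, $d(x_i,A_d)\le R$, and fix $y_i\in A_i$ with $d(y_i,p)\le R_0$. The geodesic $[p,x_i]$ lies in the $(R+R_0)$-neighborhood of $A_i$. Now consider a geodesic from $p$ to $A_d$: pick any $z_i\in A_d$ with $d(x_i,z_i)\le R$, so $d(p,z_i)$ is controlled only in terms of how far $p$ is from $A_d$, which we do not yet know is bounded — precisely the quantity we are trying to bound. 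Here hyperbolicity enters decisively: in a $\delta$-hyperbolic space, if $[p,z_1]$ and $[p,z_2]$ both pass (coarsely) through a common region near $p$, their initial segments fellow-travel. Concretely, I would bound $d(p,A_d)$ as follows. Take $i=1,2$. The points $z_1,z_2\in A_d$, and $[z_1,z_2]\subseteq \mathcal N_R(A_d)$. Consider the geodesic triangle $p, z_1, z_2$. The point $p$ is within $R+R_0$ of both $A_1$ (via $[p,z_1]$? no — via the fact that $x_1\in\mathcal N_R(A_1)$ and $x_1$ is near $z_1$...). Let me reststructure: the Gromov product $(z_1 \mid z_2)_p$ is, up to $\delta$, the distance from $p$ to $[z_1,z_2]$, hence at most $d(p,[z_1,z_2]) \le d(p, \mathcal N_R(A_d))$... circular again. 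The resolution is that $(z_1\mid z_2)_p$ is small: indeed $d(p,z_i) \le d(p,y_i) + d(y_i,x_i') + d(x_i',z_i)$ where one estimates via the near-intersection; more usefully, $z_1$ and $z_2$ are both close to $p$-direction...

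I will instead adopt the standard trick from Niblo–Reeves: proceed by induction, and at the inductive step, let $B = \mathcal N_{R_0}(p)$ play a surrogate role; form the geodesic $[a,b]$ for well-chosen $a\in A_d$, $b\in A_d$ realizing near-intersections with $A_1$ and $A_2$ respectively; this geodesic lies in $\mathcal N_R(A_d)$; show using thin triangles with apex $p$ that $[a,b]$ passes within $R' = R'(\delta,d,R)$ of $p$; the contact point then lies in $\mathcal N_R(A_d)$ and within $R'$ of $p$, hence within $R'+R_0$ of every $A_i$. The inequality $d(p,[a,b]) \le (a\mid b)_p + \delta$ together with a bound on $(a\mid b)_p$ coming from $a,b$ both being within $R+(\text{inductive constant})$ of the set $A_1\cup\dots\cup A_{d-1}$ near $p$ — this is the step I expect to be the main obstacle, and it requires combining the $d-1$ nested coarse intersections carefully rather than using just one of them. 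Once that Gromov-product bound is in hand, the conclusion is immediate and $R'$ is given by an explicit (exponential-in-$d$) recursion in $\delta, R$.
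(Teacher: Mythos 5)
The paper's proof is quite different from yours: it invokes Bowditch's approximation-tree theorem (a finite configuration of points in a $\delta$-hyperbolic space is quasi-isometric to a finite metric tree, with constants depending only on $\delta$ and the number of points), applies it to the pairwise coarse-intersection points $x_{i,j}$, then uses the Helly property of subtrees of a tree to produce a single point $y$ common to all the subtrees $T_i$, whose image under the quasi-isometry lies close to every $A_i$. This handles all $d$ sets simultaneously and avoids any induction.

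Your inductive strategy has a real gap, which you yourself flag at the end. The problem is at the very first step of the plan: you apply the inductive hypothesis to $A_1,\dots,A_{d-1}$ to get a point $p$ close to each of them, and then aim to find a point \emph{close to $p$} that is also close to $A_d$. That goal is too strong and in general unattainable: the inductive hypothesis gives you \emph{some} $p$, with no control over where it sits relative to the coarse intersections with $A_d$. For a concrete obstruction, take $A_1=A_2=\ell$ a geodesic and $A_3$ a geodesic crossing $\ell$ far from the chosen $p$; the common coarse intersection point of all three exists but is nowhere near $p$. Your later attempts to bound the Gromov product $(a\mid b)_p$ are doomed for the same reason: for a badly placed $p$, that product is genuinely large. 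A correct inductive proof exists, but it requires applying the inductive hypothesis \emph{twice}, to two overlapping $(d-1)$-element subfamilies (say $A_1,\dots,A_{d-1}$ giving $p$ and $A_2,\dots,A_d$ giving $q$), together with a coarse intersection point $r$ of $A_1$ and $A_d$, and then using thinness of the triangle $p,q,r$: the side $[p,q]$ is coarsely in every $A_i$ for $2\le i\le d-1$ (since both endpoints are), $[p,r]$ is coarsely in $A_1$, $[q,r]$ is coarsely in $A_d$, and the center of the triangle is close to all three sides, hence close to all $d$ sets. Your version, applying the hypothesis only once and staying anchored at $p$, does not close.
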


\begin{proof}

Let $x_{i,j}$ denote an intersection point of the $R$-neighborhoods of $A_i$ and $A_j$. 
By \cite[Proposition 3.2]{Bow98}, there exists a finite metric tree $T$ with distinguished points $y_{i,j}$ ($1\le i,j\le d$), and a quasi-isometric embedding $\psi:T\to H$ such that $\psi (y_{i,j})= x_{i,j}$ and the quasi-isometry constants depend only on $\delta$ and $d$.
For each $1\le i \le d$, let $T_i$ be the subtree of $T$ spanned by $\{ y_{i,j} | 1\le j \le d\}$. Since the subtrees $T_i$ pairwise intersect, by the Helly property there exists an intersection point $y\in \bigcap _{i=1} ^d T_i$.
The image $y$ under $\psi$ is on the quasiconvex sets $\psi (T_i)$, which by the quasiconvexity of $A_i$ and the stability of quasi-geodesic, are at bounded distance $R'=R'(\delta,d,R)$ from each $A_i$.
\end{proof}

When applying the previous lemma on cosets of quasiconvex subgroups we can deduce the following.

\begin{lemma}
\label{lem: coarsely intersecting implies cond X}
If $H$ is a hyperbolic group and $L_k,\;k=1,\ldots,r$, are $R$-quasiconvex subgroups. Then for any $d$, $H$ acts co-finitely on collections of $d$ pairwise $R$-coarsely intersecting cosets of $L_k$.
\end{lemma}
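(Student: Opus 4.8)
The plan is to feed the Thin Simplex Lemma into the finiteness of metric balls in a hyperbolic group. Fix a finite generating set of $H$, let $\delta$ be a hyperbolicity constant for the associated Cayley graph, and let $R_0$ be a common quasiconvexity constant for $L_1,\dots,L_r$ (take the maximum of the individual constants, enlarging $R$ if necessary so that $R_0\le R$). Since left translation by an element of $H$ is an isometry, every coset $gL_k$ is $R$-quasiconvex with this same constant. So if $(g_1L_{k_1},\dots,g_dL_{k_d})$ is a $d$-tuple of cosets of the $L_k$ that is pairwise $R$-coarsely intersecting, then Lemma \ref{lem: thin simplex} (applied to the geodesic space $H$ with its word metric) yields a constant $R'=R'(\delta,d,R)$ and a point $p$ in the Cayley graph lying in the $R'$-neighbourhood of each $g_iL_{k_i}$.

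Next I would normalise within the $H$-orbit. Choose a vertex $v\in H$ with $d(v,p)\le 1$ and replace the tuple by $(v^{-1}g_1L_{k_1},\dots,v^{-1}g_dL_{k_d})$, which lies in the same $H$-orbit under the left-multiplication action $h\cdot(gL_k)=(hg)L_k$. After this replacement $p$ (now $v^{-1}p$) is within distance $1$ of the identity $1\in H$, so every coordinate of the normalised tuple is a coset of some $L_k$ meeting the ball $B=B(1,R'+1)$.

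Finally I count. For each fixed $k$ the cosets of $L_k$ partition $H$, so at most $|B|$ of them can meet the finite set $B$; hence each coordinate of a normalised tuple is one of at most $r|B|$ cosets, and there are at most $(r|B|)^d$ normalised tuples altogether. Since every $H$-orbit of a pairwise $R$-coarsely intersecting $d$-tuple of cosets contains a normalised representative, the action of $H$ on the set of such tuples (and a fortiori on the set of such unordered collections) has at most $(r|B|)^d$ orbits, i.e.\ is co-finite. The only point requiring a moment's care is the uniform quasiconvexity constant needed to invoke Lemma \ref{lem: thin simplex}, which is handled above by translating and taking a maximum over the finitely many $L_k$; the rest is bookkeeping with the translation action and the finiteness of balls in a finitely generated group.
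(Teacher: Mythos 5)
Your argument is essentially identical to the paper's: apply the Thin Simplex Lemma to obtain a uniform $R'$ so that any such tuple of cosets meets a common $R'$-ball, then use the cobounded (left-translation) action of $H$ on itself to normalise the ball near the identity and count cosets meeting a fixed finite ball. You spell out the normalisation and the counting more explicitly, but the route is the same.
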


\begin{proof}
From the Thin Simplex Lemma there exists $R'$ such that any collection of pairwise $R$-coarsely intersecting cosets of $L_k,\; 1\le k\le r$ must intersect some $R'$ ball. 
Since $H$ acts on itself coboundedly, up to the action of $H$ there are only finitely many such balls, and thus only finitely many collections of $d$ cosets that intersect them.
\end{proof}

We are now ready to prove Theorem \ref{thm:QC}. In fact, we prove the following version.

\begin{theorem}
\label{thm:QC2}
Let $G$ be a hyperbolic group acting properly and co-compactly on a finite dimensional CAT(0) cube complex $\CC{X}'$. Let $H\le G$ be a finitely presented subgroup. Then the following are equivalent:
\begin{enumerate}
\item \label{QC} The subgroup $H$ is quasiconvex in $G$.
\item \label{QC by interated hyperplanes} For all $\hyp{t}\in \Hyp{IH}'$ (see definition in the introduction), the group $\Stab_H (\hyp{t})$ is finitely presented.
\item \label{QC by qc in G}For all $\hyp{k}\in\Hyp{H}'$, $\Stab_H (\hyp{k})$ is quasiconvex in $G$.
\item \label{QC by qc in H}The subgroup $H$ is hyperbolic and for all $\hyp{k}\in\Hyp{H}'$, $\Stab_H (\hyp{k})$ is quasiconvex in $H$.
\item \label{QC by condition X}The subgroup action $H\actson \CC{X}'$ satisfies \condX.

\end{enumerate}
\end{theorem}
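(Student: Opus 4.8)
The plan is to prove the chain of implications $(\ref{QC})\Rightarrow(\ref{QC by condition X})\Rightarrow(\ref{QC by qc in G})\Rightarrow(\ref{QC by qc in H})\Rightarrow(\ref{QC by interated hyperplanes})\Rightarrow(\ref{QC})$, using the folding machinery developed in the earlier sections together with the Thin Simplex Lemma.

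\textbf{From quasiconvexity to Property CC.} First I would show $(\ref{QC})\Rightarrow(\ref{QC by condition X})$. Since $H$ is quasiconvex in the hyperbolic group $G$, it is hyperbolic and acts properly on $\CC{X}'$. The hyperplane stabilizers $\Stab_G(\hyp{k})$ are quasiconvex in $G$ (this is part of the cubulation setup), hence $\Stab_H(\hyp{k})=H\cap\Stab_G(\hyp{k})$ is quasiconvex in $H$, and in particular finitely generated. So the hypotheses of the folding construction apply to $H\actson\CC{X}'$: we obtain the geometric resolution $\CC{X}=\CC{X}_0$ and, by Proposition \ref{finite_folding}, a finite folding sequence $\CC{X}_0\leadsto\cdots\leadsto\CC{X}_n\embedsin\CC{X}'$. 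It remains to see that $H$ acts coboundedly on $\CC{X}_n$. For this I would use the saturated tracks of Claim \ref{saturated tracks}: since the $i$-saturated track $\trk{t}_{\hyp{h}^i}$ is $\Stab_H(\hyp{h}^i)$-cocompact and $H$ acts cocompactly on $T_i$, and since by Claims \ref{saturated tracks} and \ref{intersecting strk} transverse hyperplanes have intersecting saturated tracks, the cubes of $\CC{X}_n$ correspond to collections of pairwise-transverse hyperplanes, i.e.\ to finite families of saturated tracks that pairwise intersect in $\usimp{K}$. Using the Thin Simplex Lemma (Lemma \ref{lem: thin simplex}) applied to the quasiconvex subsets $\trk{t}_{\hyp{h}^n}$ of $H$ (which are quasiconvex because their stabilizers $\Stab_H(\hyp{h}^n)$ are quasiconvex and act cocompactly on them), any such pairwise-intersecting family is $R'$-coarsely intersecting for a uniform $R'$; by Lemma \ref{lem: coarsely intersecting implies cond X} there are only finitely many $H$-orbits of such families, hence finitely many $H$-orbits of cubes, so the action on $\CC{X}_n$ is cocompact. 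This is \condX with $\CC{Y}=\CC{X}_n$.

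\textbf{The easy middle implications.} Next, $(\ref{QC by condition X})\Rightarrow(\ref{QC by qc in G})$: if $H\actson\CC{Y}\embedsin\CC{X}'$ with $\CC{Y}$ cocompact, then $H$ acts properly and cocompactly on $\CC{Y}$, so $\CC{Y}$ is quasi-isometric to $H$ and (via the $L^1$-embedding $\CC{Y}\embedsin\CC{X}'$, which is an isometric embedding onto its image and the image is quasiconvex since $\CC{X}'$ is hyperbolic) $H$ is quasiconvex in $G$; moreover for each $\hyp{k}'\in\Hyp{H}'$ the hyperplane $\hyp{k}=\CC{Y}\cap\hyp{k}'$ (when nonempty) is a convex, hence quasiconvex, subcomplex on which $\Stab_H(\hyp{k}')$ acts cocompactly, so $\Stab_H(\hyp{k}')$ is quasiconvex in $\CC{Y}$, hence in $G$. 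Then $(\ref{QC by qc in G})\Rightarrow(\ref{QC by qc in H})$ is formal: $H$ itself is one of the subgroups covered (taking the trivial intersection, $H=\Stab_H$ of the whole complex), so $H$ is quasiconvex in $G$, hence hyperbolic; and a subgroup quasiconvex in $G$ that is contained in the quasiconvex subgroup $H$ is quasiconvex in $H$.

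\textbf{Back to finite presentation, and closing the loop.} The step $(\ref{QC by qc in H})\Rightarrow(\ref{QC by interated hyperplanes})$: if $H$ is hyperbolic and $\Stab_H(\hyp{k})$ is quasiconvex in $H$ for every hyperplane, then for $\hyp{t}=\bigcap_{i=1}^m\hyp{k}_i\in\Hyp{IH}'$ we have $\Stab_H(\hyp{t})=\bigcap_i\Stab_H(\hyp{k}_i)$, a finite intersection of quasiconvex subgroups of the hyperbolic group $H$, which is again quasiconvex, hence hyperbolic, hence finitely presented. Finally $(\ref{QC by interated hyperplanes})\Rightarrow(\ref{QC})$ is where the main work sits and where I expect the principal obstacle: one must run the folding argument again but now without a priori knowing that hyperplane stabilizers in $H$ are finitely generated. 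Here the hypothesis that all $\Stab_H(\hyp{t})$, $\hyp{t}\in\Hyp{IH}'$, are finitely presented must be leveraged orbit-by-orbit through the folding sequence to guarantee that Proposition \ref{finite_folding} applies at each stage and that the resulting folded complex $\CC{X}_n$ still carries a cobounded $H$-action; the saturated-track description from Claim \ref{saturated tracks} is the bookkeeping device, since the stabilizer of an $i$-saturated track corresponds to a hyperplane stabilizer in $\Hyp{H}_i$, and one checks inductively that finite presentation of the relevant intersection stabilizers propagates so that the Thin Simplex / coarse-intersection argument of the first paragraph can be re-run to conclude cocompactness of $\CC{X}_n$, and then $H$ is quasiconvex in $G$ as before. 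The delicate point throughout is verifying that the intersection pattern of saturated tracks after folding is genuinely controlled by $\Hyp{IH}'$ and its stabilizers — that is, that folds do not create intersection combinatorics unaccounted for by the finiteness hypotheses — which is exactly what Lemmas \ref{preimage of crossing hyps}, \ref{orientation}, \ref{thirdguy} and Claim \ref{intersecting strk} are set up to control.
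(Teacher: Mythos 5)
Your chain $(\ref{QC})\Rightarrow(\ref{QC by condition X})\Rightarrow(\ref{QC by qc in G})\Rightarrow(\ref{QC by qc in H})\Rightarrow(\ref{QC by interated hyperplanes})\Rightarrow(\ref{QC})$ has two genuine gaps, and the second one is serious.

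First, the step $(\ref{QC by qc in G})\Rightarrow(\ref{QC by qc in H})$ is not ``formal'' in the way you claim. You argue that ``$H$ itself is one of the subgroups covered (taking the trivial intersection, $H=\Stab_H$ of the whole complex),'' but $(\ref{QC by qc in G})$ quantifies over $\hyp{k}\in\Hyp{H}'$ --- actual hyperplanes --- not over elements of $\Hyp{IH}'$. The whole complex is the empty intersection, which belongs to $\Hyp{IH}'$ but is not a hyperplane; only statement $(\ref{QC by interated hyperplanes})$ talks about that stabilizer. So $(\ref{QC by qc in G})$ does not tell you $H$ is quasiconvex in $G$ (and hence hyperbolic), which you need for $(\ref{QC by qc in H})$. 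The paper sidesteps this exactly because it does \emph{not} try to deduce $(\ref{QC by qc in H})$ from $(\ref{QC by qc in G})$ directly: instead it proves $(\ref{QC by qc in G})\Rightarrow(\ref{QC by condition X})$ by running the Thin Simplex argument inside $G$ (where the $L_k$ are quasiconvex by hypothesis) and then pulling the coarse intersection point back to $H$ using only that $H$ is a coarsely embedded finitely generated subgroup of $G$; this never needs to know $H$ is quasiconvex in $G$ a priori.

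Second, your $(\ref{QC by interated hyperplanes})\Rightarrow(\ref{QC})$ is not a proof; it is a description of an obstacle and a hope that it can be overcome. The concern you raise --- ``one must run the folding argument again but now without a priori knowing that hyperplane stabilizers in $H$ are finitely generated'' --- is real, and the vague plan of ``leveraging the hypothesis orbit-by-orbit through the folding sequence'' does not resolve it, because Proposition \ref{finite_folding} already requires finite generation of the hyperplane stabilizers in $\Hyp{H}'$ as an input before the first fold is made, and the finite-presentation hypothesis in $(\ref{QC by interated hyperplanes})$ is not obviously equivalent to anything about the folded pocsets $\Hyp{H}_i$. The paper's actual argument is a clean induction on $\dim\CC{X}'$: for $\dim\CC{X}'=1$ the claim is classical; for $\dim\CC{X}'=d+1$, each $\hyp{k}\in\Hyp{H}'$ is itself a $d$-dimensional \CCC on which $\Stab_H(\hyp{k})$ acts with finitely presented intersections of hyperplane stabilizers (these are exactly the elements of $\Hyp{IH}'$ containing $\hyp{k}$), so by the induction hypothesis $\Stab_H(\hyp{k})$ is quasiconvex in $\Stab_G(\hyp{k})$, hence in $G$, which is precisely hypothesis $(\ref{QC by qc in G})$, and one concludes via $(\ref{QC by qc in G})\Rightarrow(\ref{QC})$. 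You will want to replace your hand-waving here with this dimension induction; note that it requires $(\ref{QC by qc in G})\Rightarrow(\ref{QC})$ to have been established \emph{independently} of $(\ref{QC by interated hyperplanes})\Rightarrow(\ref{QC})$, so you cannot place $(\ref{QC by interated hyperplanes})\Rightarrow(\ref{QC})$ inside a single directed cycle as your chain does --- the paper's logical architecture (first establish $(\ref{QC})\Leftrightarrow(\ref{QC by qc in G})\Leftrightarrow(\ref{QC by qc in H})\Leftrightarrow(\ref{QC by condition X})$, then attach $(\ref{QC by interated hyperplanes})$ by the induction) is not just cosmetically different from yours.
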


\begin{proof}
We recall the following 4 facts:
\begin{enumerate}[label=(\emph{\alph*})]
\item \label{inter qc is qc} the intersection of (finitely many) quasiconvex subgroups is quasiconvex,
\item \label{qc in hyp is hyp} a quasiconvex subgroup of a hyperbolic group is itself hyperbolic, and therefore finitely presented,
\item \label{qc in sbgp iff in gp} a subgroup of a quasiconvex subgroup is quasiconvex in the subgroup if and only if it is quasiconvex in the ambient group, and
\item \label{hyps are qc} if a hyperbolic group $G$ acts properly co-compactly on a CAT(0) cube complex then its hyperplane stabilizers $\Stab_G (\hyp{h})$ are quasiconvex in $G$.
\end{enumerate}

From the above facts it is easy to see that \ref{QC} implies \ref{QC by interated hyperplanes}, \ref{QC by qc in G} and \ref{QC by qc in H}.
Indeed from \ref{hyps are qc}, hyperplanes stabilizers in $G$ are quasiconvex in $G$. From \ref{inter qc is qc}, elements in $\Hyp{IH}'$ are quasiconvex, and from \ref{qc in hyp is hyp}, they are finitely presented. Therefore \ref{QC} implies \ref{QC by interated hyperplanes} and \ref{QC by qc in G}. Using \ref{qc in sbgp iff in gp}, we deduce  \ref{QC} $\implies$ \ref{QC by qc in H}.

The implication \ref{QC by condition X}$\implies$\ref{QC} is also immediate.

For the remaining implication we proceed as follows. We first prove that \ref{QC by qc in G}$\iff$\ref{QC by qc in H}$\iff$\ref{QC by condition X}$\iff$\ref{QC}, by showing \ref{QC by qc in H}$\implies$\ref{QC by condition X} and \ref{QC by qc in G}$\implies$\ref{QC by condition X}. We then use the implication \ref{QC by qc in G}$\implies$\ref{QC} to show \ref{QC by interated hyperplanes}$\implies$\ref{QC}.

	In the remaining cases we are in the setting of the discussion at the beginning of the section, where $G$ is replaced by the subgroup $H$ (i.e, $H=\pi_1(\simp{K})$ etc.).
	Therefore, we have a geometric resolution $\CC{X}'\to \CC{X}$ for the action of $H$ on $\CC{X}$ and a folding sequence \[ \CC{X}=\CC{X}_0 \leadsto \CC{X}_1 \leadsto \ldots \leadsto \CC{X}_n=\CC{X}/\hspace{-.15cm}\sim_f\embedsin\CC{X}'.\]
	Let $\hyp{h}^n_1,\ldots,\hyp{h}^n_r$ be a set of $H$-orbit representatives for $H\actson\Hyp{H}_n$ (recall that the action of $H$ on $\Hyp{H}_n$ has finitely many orbits).
	Let us denote by $L_k = \Stab_H (\hyp{h}^n_k)$ for all $1\le k \le r$.	
	By Property \ref{strk are stab cocompact} of Claim \ref{saturated tracks} for all $1\le k \le r$, $L_k$ acts cocompactly on the $n$-saturated track $\trk{t}_{\hyp{h}^n_k}$. If we fix $x_0\in\usimp{K}$, then there exists $R'\ge 0$ such that the orbit $L_k.x_0$ is at Hausdorff distance at most $R'$ from the image of $\trk{t}_{\hyp{h}^n_k}$ in $\usimp{K}$ for all $1\le k \le r$.  
	It follows that the set $gL_k.x_0$ is at Hausdorff distance at most $R'$ from $g.\trk{t}_{\hyp{h}^n_k}$ in $\usimp{K}$ for all $g\in G$ and $1\le k \le r$.
	Therefore, by Claim \ref{intersecting strk} if $g\hyp{h}_k^n\pitchfork g'\hyp{h}_{k'}^n$ then the corresponding $n$-saturated tracks $g\trk{t}_{\hyp{h}_k^n}$ and $g'\trk{t}_{\hyp{h}_{k'}^n}$ intersect, and the corresponding sets $gL_k.x_0$ and $g'L_{k'}.x_0$ are $2R'$-coarsely intersecting.
	Notice that the orbit map $H\to\usimp{K}$ defined by $g\mapsto g.x_0$ is a quasi-isometric, in particular there exists $R\ge 0$ such that if $gL_k.x_0$ and $g'L_{k'}.x_0$ are $2R'$-coarsely intersecting, then $gL_k$ and $g'L_{k'}$ are $R$-coarsely intersecting (for all $g,g'\in G$ and $1\le,k,k'\le r$). 
	
	To summarize this discussion, there exists $R$ such that a collection of transverse hyperplanes in $\Hyp{H}_n$ corresponds to a pairwise $R$-coarsely intersecting collections of cosets of the hyperplane stabilizers $L_k$ in $H$.
	
	\ref{QC by qc in H}$\implies$\ref{QC by condition X}: Since $H$ is hyperbolic and $L_i$ are quasiconvex, we can apply Lemma \ref{lem: coarsely intersecting implies cond X} to show that $H$ acts cofinitely on cubes of $\CC{X}_n$. This implies that it acts cocompactly on $\CC{X}_n$. 
	Finally, since $\CC{X}_n\embedsin\CC{X}'$ and $H$ acts properly and cocompactly on $\CC{X}_n$ we have shown that $H$ has \condX with $\CC{Y} = \CC{X}_n$.
	
%
%
	\ref{QC by qc in G}$\implies$\ref{QC by condition X}: As before, there exists $R$ such that a collection of transverse hyperplanes in $\Hs{H}_n$ corresponds to a pairwise $R$-coarsely intersecting collection of cosets of the hyperplane stabilizers $L_k$ in $H$. 
	Since $H$ is a (finitely generated) subgroup of $G$, there exists $R_2$ such that a pairwise $R$-coarsely intersecting collection of cosets in $H$ is pairwise $R_2$-coarsely intersecting in $G$. 
	Since the hyperplane stabilizers are assumed to be quasiconvex in $G$, and $G$ is hyperbolic, we can apply Lemma \ref{lem: thin simplex} to deduce that there exists $R_3$ such that any such collection is $R_3$-coarsely intersecting in $G$.
	But since $H$, being a finitely generated subgroup of $G$, is coarsely embedded in $G$, there exists $R_4$ such that a collection of subsets of $H$ that is $R_3$-coarsely intersecting in $G$ is $R_4$-coarsely intersecting in $H$, and thus, as in the proof of Lemma \ref{lem: coarsely intersecting implies cond X}, $H$ acts cofinitely on such collections, which again proves \condX with $\CC{Y} = \CC{X}_n$.

Finally we prove \ref{QC by interated hyperplanes}$\implies$\ref{QC} by induction on the dimension of $\CC{X}'$. If $\dim(\CC{X}')=1$, then it follows from the well known fact that a finitely generated subgroup of a group acting properly on a tree is quasiconvex. 
Now let $\dim(\CC{X}')=d+1\ge2$. The hyperplanes of $\CC{X}'$ are CAT(0) cube complexes of dimension  at most $d$. For all $\hyp{k}\in\Hyp{H}(\CC{X}')$ the subgroup $\Stab_H (\hyp{k})$ is  a finitely presented group acting on a $d$-dimensional \CCC with finitely presented intersections of hyperplane stabilizers. Hence, by the induction hypothesis, they are quasiconvex in $\Stab_G (\hyp{k})$, and thus also quasiconvex in $G$. The desired conclusion follows from \ref{QC by qc in G}$\implies$\ref{QC}, that we have already proved.
\end{proof}

\bibliographystyle{plain}
\bibliography{main}

\end{document}